\numberwithin{equation}{section}
\def\Z{{\mathbb Z}}
\def\Q{{\mathbb Q}}
\def\C{{\mathbb C}}
\def\P{{\mathbb P}}
\def\E{{\mathbb E}}
\def\H{{\mathbb H}}
\def\L{{\mathbb L}}
\def\V{{\mathbb V}}
\def\A{{\mathcal A}}
\def\cC{{\mathcal C}}
\def\D{{\mathcal D}}
\def\E{{\mathcal E}}
\def\cG{{\mathcal G}}
\def\cH{{\mathcal H}}
\def\M{{\mathcal M}}
\def\cN{{\mathcal N}}
\def\cP{{\mathcal P}}
\def\T{{\mathcal T}}
\def\U{{\mathcal U}}
\def\cV{{\mathcal V}}
\def\e{\epsilon}
\def\G{\Gamma}
\def\d{{\mathfrak d}}
\def\e{{\mathfrak e}}
\def\g{{\mathfrak g}}
\def\h{{\mathfrak h}}
\def\n{{\mathfrak n}}
\def\p{{\mathfrak p}}
\def\r{{\mathfrak r}}
\def\s{{\mathfrak s}}
\def\u{{\mathfrak u}}
\def\etabar{{\overline{\eta}}}
\def\Ql{{\Q_\ell}}
\def\Zl{{\Z_\ell}}
\def\Qp{{\Q_p}}
\def\Gm{{\mathbb{G}_m}}
\def\Sp{{\mathrm{Sp}}}
\def\GL{{\mathrm{GL}}}
\def\GSp{{\mathrm{GSp}}}
\def\arith{\mathrm{arith}}
\def\geom{\mathrm{geom}}
\def\top{\mathrm{top}}
\def\orb{\mathrm{orb}}
\def\ab{\mathrm{ab}}
\def\et{\mathrm{\acute{e}t}}
\def\nab{\mathrm{nab}}
\def\hyp{\mathrm{hyp}}
\def\tambo{\boxplus}
\newcommand\im{\operatorname{im}}
\newcommand\id{\operatorname{id}}
\newcommand\ad{\operatorname{ad}}
\newcommand\Hom{\operatorname{Hom}}
\newcommand\Spec{\operatorname{Spec}}
\newcommand\Aut{\operatorname{Aut}}
\newcommand\Inn{\operatorname{Inn}}
\newcommand\Out{\operatorname{Out}}
\newcommand\Der{\operatorname{Der}}
\newcommand\Gr{\operatorname{Gr}}
\newcommand\Jac{\operatorname{Jac}}
\newcommand\Gal{\operatorname{Gal}}
\newcommand\Pic{\operatorname{Pic}}
\newcommand\Sect{\operatorname{Sect}}
\newtheorem{theorem}{Theorem}[section]
\newtheorem{lemma}[theorem]{Lemma}
\newtheorem{proposition}[theorem]{Proposition}
\newtheorem{corollary}[theorem]{Corollary}
\newtheorem{bigtheorem}{Theorem}
\theoremstyle{definition}
\theoremstyle{remark}
\newtheorem{remark}[theorem]{Remark}
\newtheorem{variant}[theorem]{Variant}
\begin{document}


\title{On the sections of universal hyperelliptic curves}


\author{Tatsunari Watanabe}
\address{Department of Mathematics, Purdue University, 
West Lafayette}
\email{twatana@purdue.edu}
\urladdr{www.math.purdue.edu/people/bio/twatana} 





\begin{abstract} In this paper, we will give an algebraic proof for determining the sections for the universal pointed  hyperelliptic curves $\cC_{\cH_{g,n/k}}\to \cH_{g,n/k}$, when $g\geq 3$ and the image of the $\ell$-adic cyclotomic  character $G_k\to \Z^\times$ is infinite. Furthermore, we will study the nonabelian phenomena associated to the universal hyperelliptic curves. For example, we will show that the section conjecture holds for $\cC_{\cH_{g/k}}\to \cH_{g/k}$ and the unipotent analogue of the conjecture holds for the pointed cases. This work is an extension of Hain's original work \cite{hain2} to the hyperelliptic case. 

\end{abstract}


 \maketitle



\section{Introduction}
Let $k$ be a field of characteristic zero. Denote by $\M_{g,n/k}$ the moduli stack of proper smooth $n$-pointed curves of genus $g$ over $k$. Suppose that $2g-2+n>0$. When $k\subset \C$, it is known that if $g\geq 3$, then the sections of the universal curve over $\M_{g,n/k}$ are exactly the $n$ tautological sections. This follows from results in Teichm$\ddot{\text{u}}$ller theory by Hubbard \cite{Hu} for $n=0$ and Earle and Kra \cite{EaKr} for $n>0$. In \cite[Thm.~1]{hain2}, Hain gave a proof of this fact by using the theory of weighted completion of profinite groups developed by Hain and Matsumoto in \cite{wei}. In this paper, we will consider the universal hyperelliptic curve $\cC_{\cH_{g,n/k}}\to \cH_{g,n/k}$ that is the restriction of the universal curve to the hyperelliptic locus $\cH_{g,n/k}$ in $\M_{g,n/k}$.  It has the involution $J:\cC_{\cH_{g,n/k}}\to \cC_{\cH_{g,n/k}}$ whose restriction to each fiber is the hyperelliptic involution. The tautological sections $x_1,\ldots,x_n$ of $\cC_{\cH_{g,n/k}}\to \cH_{g,n/k}$ have their hyperelliptic conjugates $J\circ x_1$,\ldots, $J\circ x_n$, respectively.  Firstly, we determine the sections of the universal hyperelliptic curve by using the weighted completion of the \'etale fundamental group $\pi_1(\cH_{g,n/k})$ of $\cH_{g,n/k}$. 
\begin{bigtheorem} Let $k$ be a field of characteristic zero such that the image of $\ell$-adic cyclotomic character $\chi_\ell:G_k\to \Zl^\times$ is infinity for a prime number $\ell$. 
	If $g\geq 3$, then the sections of  $\cC_{\cH_{g,n/k}}\to \cH_{g,n/k}$ are exactly the tautological ones and their hyperelliptic conjugates. 
\end{bigtheorem}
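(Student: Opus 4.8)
The plan is to convert the geometric statement into a computation of nonabelian cohomology carried out inside the weighted completion of $\pi_1(\cH_{g,n/k})$. A section $s$ of $\cC_{\cH_{g,n/k}}\to \cH_{g,n/k}$ induces a splitting of the homotopy exact sequence
$$1 \to \pi_1(C_{\overline{\eta}}) \to \pi_1(\cC_{\cH_{g,n/k}}) \to \pi_1(\cH_{g,n/k}) \to 1,$$
where $C_{\overline{\eta}}$ is a geometric fiber, and two sections induce the same point-theoretic section exactly when the splittings are conjugate by the fiber group $\pi:=\pi_1(C_{\overline{\eta}})$. The $2n$ maps $x_1,\dots,x_n,J\circ x_1,\dots,J\circ x_n$ manifestly give sections, so it suffices to show there are no others, i.e.\ that the set of $\pi$-conjugacy classes of splittings is exhausted by these. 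First I would record the monodromy data: $\H=R^1$ of the fibers is a weight $-1$ symplectic local system, $\pi_1(\cH_{g,n/k})$ acts through $\GSp(\H)$, and since the Weil pairing identifies the similitude character with a power of $\chi_\ell$, the hypothesis that $\chi_\ell$ has infinite image guarantees that the similitude $\Gm\subset\GSp(\H)$ is hit with infinite image. This is precisely what makes the weight grading genuine and negative weights nontrivially detected.

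Next I would pass to the weighted completion $\cG$ of $\pi_1(\cH_{g,n/k})$ relative to this representation, an extension $1\to\U\to\cG\to R\to 1$ with $\U$ prounipotent and $R$ a reductive quotient of $\GSp(\H)$, together with the relative unipotent completion $\cP$ of the fiber group $\pi$. The group $\cP$ carries a $\cG$-action and a weight filtration with $\Gr^W_{-1}\cP=\H$, the deeper graded quotients being those of the Malcev Lie algebra of a genus-$g$ surface, i.e.\ the free Lie algebra on $\H$ modulo the symplectic (quadratic) relation. Because $\H$ and all these graded pieces have strictly negative weight, the continuous cohomology of $\cG$ agrees with that of $\pi_1(\cH_{g,n/k})$ on them, and conjugacy classes of splittings of the completed extension $\cG\ltimes\cP\to\cG$ can be analyzed inductively along the weight filtration: the splittings reducing to a fixed one modulo $W_{-m-1}$ form a torsor under $H^1(\cG,\Gr^W_{-m}\cP)$ when nonempty, with obstruction class in $H^2(\cG,\Gr^W_{-m}\cP)$.

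The heart of the matter is the bottom layer, governed by $H^1(\cG,\H)$, and this I expect to be the main obstacle. One must compute the first cohomology of the weighted completion of the \emph{hyperelliptic} arithmetic mapping class group with coefficients in the standard symplectic representation; since the hyperelliptic monodromy group is a proper subgroup of the full mapping class group, this cannot simply be quoted from Hain's computation for $\M_{g,n}$ and must be carried out directly, e.g.\ via the Birman--Hilden description relating $\cH_{g,n/k}$ to configuration/braid data or from a presentation of the hyperelliptic mapping class group. The target outcome is that the classes realizable as abelianizations of sections are precisely those coming from the tautological sections and their $J$-conjugates: the hyperelliptic involution acts as $-1$ on $\H$, so $J$ intertwines the class of $x_i$ with that of $J\circ x_i$ and is exactly what doubles the natural family from $n$ to $2n$. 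The hypothesis $g\geq 3$ enters here to guarantee irreducibility of $\H$, the stability and vanishing needed to control $H^1$ and $H^2$, and in particular the vanishing in the unpointed case $n=0$ (no sections), matching the absence of tautological sections there.

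Finally I would run the inductive rigidity step to promote each admissible abelianized class to a unique genuine section. The graded pieces $\Gr^W_{-m}\cP$ for $m\geq 2$ decompose under $R$ into symplectic summands carrying Tate twists in negative weight; the infinite-image hypothesis on $\chi_\ell$, together with these weight reasons, forces the relevant $H^1(\cG,\Gr^W_{-m}\cP)$ and the obstruction groups $H^2(\cG,\Gr^W_{-m}\cP)$ to vanish for $m\geq 2$, so that the map from sections to $H^1(\cG,\H)$ is injective and every admissible class lifts, uniquely up to $\pi$-conjugacy. Since the $2n$ sections $x_i,J\circ x_i$ already realize all admissible classes and are pairwise non-conjugate (a tautological section is not fiberwise a Weierstrass point, so $x_i\neq J\circ x_i$), the classification closes and these are all the sections. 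The two steps demanding the most care are the explicit determination of $H^1(\cG,\H)$ for the hyperelliptic group and the verification that the higher unipotent layers contribute nothing, the latter reducing to the Tate-twist vanishing afforded by the cyclotomic hypothesis.
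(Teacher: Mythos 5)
Your overall framework (weighted completion, weight filtration on $\cP$, analysis layer by layer) is the right one, but the proposal has three genuine gaps, two of which would make the argument fail as stated. First, the claimed vanishing of $H^1(\cG,\Gr^W_{-m}\cP)$ and $H^2(\cG,\Gr^W_{-m}\cP)$ for all $m\geq 2$ ``by Tate-twist reasons'' is false: the $\Sp(H)$-invariant part of $\Gr^W_{-2m}\cP$ is a sum of copies of $\Ql(m)$, and $H^1(G_k,\Ql(m))$ is nonzero for $m\geq 1$ over number fields and many other fields with $\chi_\ell$ of infinite image. Indeed the paper's own computation of $H^1_\nab(\D_{g,n},\cP/W_{-l-1})$ in the last section exhibits a nontrivial factor $H^1\bigl(G_k,(\Gr^W_{-l}\cP)^{\Sp(H)}\bigr)$ for $l$ even. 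So the inductive rigidity step cannot be run by vanishing alone; the paper instead classifies the $R$-invariant \emph{graded Lie algebra} sections of the four-step quotient $\h_{g,n+1}\to\h_{g,n}$ (Theorem \ref{sections of lie algebras}), and the decisive input there is not $H^1(\cG,\H)$ but a weight $-4$ computation: the images under the hyperelliptic Johnson homomorphism of two commuting separating Dehn twists have $V_{[2^2]}$-components whose bracket is a \emph{nonzero inner} derivation in $\Der_4\p$ (Theorem \ref{no zero outer part}), and this, together with the purity of $H_1(\mathfrak{v}_g)$ in weight $-2$ (which rests on the simple connectivity of the extended hyperelliptic Torelli space, not on a presentation of the hyperelliptic mapping class group), forces the coefficients $a_j$ of a putative section to satisfy $a_j(a_j^2-1)=0$ and $\sum_j a_j^4=1$. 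Note also that the ordinary Johnson homomorphism vanishes identically on $T\Delta_g$ (the involution acts by $-1$ on its target), so one must go to weight $-3$ and $-4$; your proposal never engages with how the bottom layer actually constrains anything in the hyperelliptic setting.

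Second, even granting the cohomological classification, you still owe the passage from ``two scheme-theoretic sections induce the same class $\kappa_x=\kappa_1$ in $H^1_\et(\cH_{g,n/k},\H_\Ql)$'' to ``$x=x_1$''; injectivity of sections into $H^1(\cG,\H)$ does not follow from any vanishing of higher-weight cohomology, since the weighted completion only sees the pro-unipotent shadow of a section. The paper closes this gap geometrically: $[x_1]-[x]$ is torsion in the relative Jacobian, so if $x\neq x_1$ they are disjoint and define a morphism $\cH_{g,n/k}[r]\to\cH_{g,2/k}$, whose induced map on $\Gr^W_\bullet\mathfrak{v}/W_{-3}$ would have to send $(u_1,\dots,u_n)\mapsto(u_1,u_1)$; Proposition \ref{no map} rules this out using the relations $\Theta_i+\tfrac1g\sum_{j\neq i}\Theta_{ij}=0$ in $\Gr^W_{-2}\p_{g,m}$ (this replaces Hain's use of $\Lambda^3_0H$, which is absent here). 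Without some version of this step your classification bounds only conjugacy classes of splittings of the fundamental group sequence, not sections of the family.
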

Our second result concerns with a nonabelian phenomenon  associated with the universal hyperelliptic curves. Let $C$ be a proper smooth curve over $k$ of arithmetic genus $g$. Let $\bar x$ be a geometric point of $C$. Fix an algebraic closure $\bar k$ of $k$ and set $\overline{C}:=C\otimes_k\bar k$. Then there is an exact sequence of profinite groups
$$1\to \pi_1(\overline{C}, \bar x)\to \pi_1(C, \bar x)\to G_k\to 1,$$
where $G_k$ is the absolute Galois group $\Gal(\bar k/k)$.
Set $\Pi=\pi_1(\overline{C},\bar x)$. Each $k$-rational point $x$ of $C$ induces a section $s_x$ of $\pi_1(C,\bar x)\to G_k$ that is well-defined up to conjugation by an element of $\Pi$ and hence a class $[s_x]$ in the set $\T_{\pi_1(C/k)}$ of the $\Pi$-conjugacy classes of sections of $\pi_1(C, \bar x)\to G_k$. The section conjecture predicts that if $k$ is finitely generated over $\Q$ and $g\geq 2$, the function $x\mapsto [s_x]$ is a bijection. Let $K=k(\M_g)$ be the function field of $\M_{g/k}$.  Hain proved in \cite[Thm.~2]{hain2} that if the image of $\ell$-adic cyclotomic character $\chi_\ell:G_k\to \Z_\ell^\times$ is infinite and if $g\geq 5$, the section conjecture holds for the generic curve of genus $g$. It follows from the results in \cite{hain2} that the same result also holds for the universal curve $\cC_{g/k}\to \M_{g/k}$ for $g\geq 4$. Our second result is the hyperelliptic analogue of this fact. Fix a geometric point $\etabar$ of $\cH_{g/k}$ and let $C_\etabar$ be the fiber of $\cC_{\cH_{g/k}}\to \cH_{g/k}$ over $\etabar$. Fix a geometric point $\bar x$ of $C_\etabar$. 
\begin{bigtheorem} Let $k$ be a field of characteristic zero and let $\ell$ be a prime number. 
If $g\geq 3$ and the $\ell$-adic cyclotomic character $\chi_\ell:G_k\to \Z^\times_\ell$ is infinite, then the sequence
$$1\to \pi_1(C_\etabar, \bar x)\to \pi_1(\cC_{\cH_{g/k}}, \bar x)\to \pi_1(\cH_{g/k}, \etabar)\to 1$$ does not split. 
\end{bigtheorem}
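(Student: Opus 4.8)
The plan is to adapt Hain's proof of \cite[Thm.~2]{hain2} to the hyperelliptic locus, using the weighted completion of $\pi_1(\cH_{g,n/k})$ together with the input of Theorem 1. First observe that $\cC_{\cH_{g/k}}\cong \cH_{g,1/k}$, so the displayed sequence is the homotopy exact sequence of the morphism $\cH_{g,1/k}\to \cH_{g/k}$, and a splitting is precisely a $\pi_1(C_\etabar,\bar x)$-conjugacy class of continuous section. Since $\pi_1(C_\etabar,\bar x)$ is centre-free, such a section is the same datum as a lift of the outer monodromy $\pi_1(\cH_{g/k},\etabar)\to \Out\bigl(\pi_1(C_\etabar,\bar x)\bigr)$ to $\Aut\bigl(\pi_1(C_\etabar,\bar x)\bigr)$. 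Note that Theorem 1 with $n=0$ already yields that there is no \emph{geometric} section (there being no tautological section to start with); the present statement is the stronger group-theoretic assertion, and it is here that the hypothesis on $\chi_\ell$ must be used. I would assume that a section $s$ exists and derive a contradiction.

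The first step is to pass to weighted completions. A section $s$ induces a section of the exact sequence of weighted completions
\[
1 \longrightarrow \D \longrightarrow \cG \longrightarrow \A \longrightarrow 1,
\]
where $\A$ and $\cG$ are the weighted completions of $\pi_1(\cH_{g/k})$ and $\pi_1(\cH_{g,1/k})$ relative to the symplectic monodromy, and $\D$ is the completion of $\pi_1(C_\etabar,\bar x)$ relative to its symplectic monodromy. The associated graded of the prounipotent radical of $\D$ has $\Gr^{-1}=\H$ (the standard local system, of weight $-1$), while its weight $-2$ part contains a copy of $\Ql(1)$ arising from the fundamental class; concretely, $\pi_1(C_\etabar)$ has a Heisenberg quotient
\[
0 \longrightarrow \Ql(1) \longrightarrow \cN \longrightarrow \H \longrightarrow 0
\]
whose commutator is the intersection pairing $\wedge^2\H\to \Ql(1)$. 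Pushing the fundamental-group extension out along $\pi_1(C_\etabar)\to \cN$, it suffices to prove that the resulting extension of $\pi_1(\cH_{g/k})$ by $\cN$ does not split. Quotienting by the centre produces an extension of $\pi_1(\cH_{g/k})$ by $\H$, whose class lies in $H^2(\pi_1(\cH_{g/k}),\H)$ and whose sections form a torsor under $H^1(\pi_1(\cH_{g/k}),\H)$; lifting a chosen section through the central $\Ql(1)$ is then obstructed by a class in $H^2(\pi_1(\cH_{g/k}),\Ql(1))$.

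Two cohomological inputs finish the argument. The geometric one is the vanishing $H^1(\cH_{g/\bar k},\H)=0$ for $g\ge 3$, which follows from A'Campo's theorem that the hyperelliptic monodromy is Zariski-dense in $\Sp_{2g}$ together with the (hyperelliptic analogue of the) cohomology computations underlying Theorem 1; this pins down the $\H$-ambiguity and reduces the essential obstruction to an arithmetic one with $\Ql(1)$-coefficients. The arithmetic input comes from the Hochschild--Serre sequence for $1\to \pi_1(\cH_{g/\bar k})\to \pi_1(\cH_{g/k})\to G_k\to 1$, which expresses the surviving obstruction through Galois terms built on $\Ql(1)$. Since $\chi_\ell$ is infinite we have $\Ql(1)^{G_k}=0$, so no $G_k$-equivariant splitting of $\cN$ can exist; equivalently, at the level of weighted completions a section compatible with the reductive quotient would split the weight $-2$ line $\Ql(1)$ off equivariantly from $\H$, which is impossible because $\Ql(1)\not\cong \Ql$ as $G_k$-modules when $\chi_\ell$ is infinite. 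This contradiction shows that $s$ cannot exist, so the sequence does not split.

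The main obstacle I expect is the hyperelliptic geometric computation: identifying the weight $-2$ graded piece of $\D$ (the class of the Heisenberg extension $\cN$) and establishing $H^1(\cH_{g/\bar k},\H)=0$, since the hyperelliptic mapping class group is a proper subgroup of the full one and Hain's arguments are not directly available. This is exactly the point at which $g\ge 3$ and A'Campo's density enter, and it also explains the dichotomy with the pointed case: there Theorem 1 produces the nonzero classes realised by the tautological section and its hyperelliptic conjugate, whereas for $n=0$ no such class exists and the arithmetic obstruction coming from the infinite image of $\chi_\ell$ survives.
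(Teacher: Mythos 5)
Your opening move---passing to the weighted completion and seeking an obstruction to a graded Lie algebra section---is the same as the paper's, but the obstruction you then construct does not exist, for a concrete reason: the Heisenberg quotient $0\to\Ql(1)\to\cN\to\H\to 0$ with commutator the intersection pairing is \emph{not} a quotient of $\pi_1(C_\etabar,\bar x)$ when $C_\etabar$ is a \emph{projective} curve. The surface relation $\prod_i[a_i,b_i]=1$ kills $\theta=\sum_i a_i\wedge b_i$ in $\Gr^L_2$, so $\Gr^W_{-2}\p\cong\Lambda^2H/\langle\theta\rangle\cong V_{[1^2]}$ contains no copy of $\Ql(1)$ (Proposition \ref{presentation}); since the intersection pairing sends $\theta$ to $g\neq 0$, it does not descend to $\Gr^L_2$. (Such a Heisenberg quotient exists for a punctured curve, not a proper one.) With that $\Ql(1)$ gone, the arithmetic obstruction you base on $\Ql(1)^{G_k}=0$ evaporates, and the final step of your argument (``a section would split the weight $-2$ line off equivariantly from $\H$'') is in any case not what a group-theoretic splitting of an extension by a two-step nilpotent group would entail. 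The hypothesis that $\chi_\ell$ has infinite image is not the source of the obstruction at all: in the paper it is used only to make $\rho_\etabar$ Zariski dense in $\GSp(H_\Ql)$ so that the weighted completion with respect to $R=\GSp(H_\Ql)$ is available.

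Even setting aside the nonexistence of $\cN$, no obstruction can be located in weights $-1,-2,-3$, which is where your argument lives. Because $\Gr^W_{-2m-1}\mathfrak{v}_g=0$ (Proposition \ref{odd weight is zero}) and $\Hom_R(\Gr^W_{-2}\mathfrak{v}_g,V_{[1^2]})=0$ (Lemma \ref{no nichi}), the surjection $\Gr^W_\bullet\mathfrak{v}_{g,1}/W_{-4}\to\Gr^W_\bullet\mathfrak{v}_g/W_{-4}$ admits the obvious $R$-invariant graded Lie algebra section $r\mapsto(0,r)$, and by the identification of $H^1_\nab$ with the scheme of graded sections (Theorem \ref{nonab iso}) no contradiction can be extracted at that depth. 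The paper's obstruction first appears in weight $-4$: the images under the hyperelliptic Johnson homomorphism of two commuting separating Dehn twists have $V_{[2^2]}$-components whose bracket is a \emph{nonzero} inner derivation, i.e.\ the bracket $\Lambda^2\Gr^W_{-2}\mathfrak{v}_g\to\Gr^W_{-4}\p$ is nontrivial (Theorem \ref{no zero outer part}, Proposition \ref{nonzero inner bracket}), while the corresponding bracket in $\h_g$ is zero; this is exactly why $\beta_0:\h_{g,1}\to\h_g$ has no $R$-invariant Lie algebra section (Theorem \ref{sections of lie algebras}) and why the paper must work modulo $W_{-5}$ rather than $W_{-3}$. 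To repair your proof you would need to replace the Heisenberg/arithmetic step by this geometric weight $-4$ computation.
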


Except the empty case, there is no known curve $C$ for which the conjecture holds. Therefore, it is an interesting question to determine the group $\T_{\pi_1(C/k)}$  when $C$ has a $k$-rational point.  
Hain also showed in \cite[Thm.~3]{hain2} that the unipotent version of the section conjecture holds for the generic curve of type $(g,n)$ when $k$ is a number field or a finite extension of $\Q_p$ and $g\geq 5$. In the unipotent case, we will consider extensions of a profinite group by a prounipotent group $\U$ and the $\U$-conjugacy classes of splittings of them.  The study of such nonabelian cohomology theory was developed by Kim in \cite{Kim}, and furthermore in \cite{hain4}, Hain extends the theory for extensions of a proalgebraic group $\cG$ by a prounipotent group $\U$ and defines the nonabelian cohomology scheme $H^1_\nab(\cG, \U)$, which is an affine scheme under a mild condition. Our third result is the hyperelliptic analogue of Hain's result above and its proof uses the nonabelian cohomology scheme of the weighted completion $\D_{g,n}$ of $ \pi_1(\cH_{g,n/k})$.  Let $\etabar$ be a geometric point of $\cH_{g,n/k}$. Denote by $\cP$ the unipotent completion of $\pi_1(C_\etabar, \bar x)$ over $\Ql$. There is a group extension of $\pi_1(\cH_{g,n/k}, \etabar)$ by $\cP(\Ql)$
$$1\to \cP(\Ql)\to G\to \pi_1(\cH_{g,n/k}, \etabar)\to 1$$
such that the diagram
$$\xymatrix{
	1\ar[r]&\pi_1(C_\etabar, \bar x)\ar[r]\ar[d]&\pi_1(\cC_{\cH_{g,n/k}},\bar x)\ar[r]\ar[d]&\pi_1(\cH_{g,n/k}, \etabar)\ar[r]\ar@{=}[d]&1\\
	1\ar[r]&\cP(\Ql)\ar[r]&G\ar[r]&\pi_1(\cH_{g,n/k}, \etabar)\ar[r]&1
}
$$ commutes, where the left-hand vertical map is the canonical map obtained by unipotent completion. The group $G$ is uniquely determined by the weighted completions of the projection $\pi_1(\cC_{\cH_{g,n/k}})\to \pi_1(\cH_{g,n/k})$. We define $H^1_\nab(\pi_1(\cH_{g,n/k}, \etabar), \cP)(\Ql)$ to be the set of $\cP(\Ql)$-conjugacy classes of sections of $G\to \pi_1(\cH_{g,n/k}, \etabar)$. 
\begin{bigtheorem}
	Let $p$ be a prime number distinct from $\ell$. Let $k$ be a number field or finite extension of $\Qp$. If $g\geq 3$, then there is a bijection between the set of the sections of $\cC_{\cH_{g,n/k}}\to \cH_{g,n/k}$ and $H^1_\nab(\pi_1(\cH_{g,n/k},\etabar), \cP)(\Ql)$. 
\end{bigtheorem}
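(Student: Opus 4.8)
The plan is to prove that the natural map $\Phi$ sending a geometric section to its induced unipotent splitting is a bijection, by transporting the computation of $H^1_\nab$ to the weighted completion and then to Lie-algebra cohomology, where the hyperelliptic structure theory rigidifies the answer. A section $\sigma$ of $\cC_{\cH_{g,n/k}}\to\cH_{g,n/k}$ induces, by functoriality of $\pi_1$, a $\pi_1(C_\etabar,\bar x)$-conjugacy class of splittings of the top row of the displayed diagram; pushing forward along the unipotent completion map $\pi_1(C_\etabar,\bar x)\to\cP(\Ql)$ gives a $\cP(\Ql)$-conjugacy class of splittings of $G\to\pi_1(\cH_{g,n/k},\etabar)$, that is, a class $\Phi(\sigma)\in H^1_\nab(\pi_1(\cH_{g,n/k},\etabar),\cP)(\Ql)$. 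Since $k$ is a number field or a finite extension of $\Qp$ with $p\neq\ell$, the image of $\chi_\ell$ is infinite, so Theorem~1 applies and identifies the source of $\Phi$ with the finite set $\{x_1,\dots,x_n,Jx_1,\dots,Jx_n\}$. It thus suffices to show $\Phi$ is injective and that $H^1_\nab(\pi_1(\cH_{g,n/k},\etabar),\cP)(\Ql)$ has no further classes. The first move is to replace $\pi_1(\cH_{g,n/k},\etabar)$ by its weighted completion $\D_{g,n}$: since $G$ is, by construction, the pullback along $\pi_1(\cH_{g,n/k},\etabar)\to\D_{g,n}$ of a weighted extension $1\to\cP\to\Gtilde\to\D_{g,n}\to 1$, and since $\cP$ is prounipotent and negatively weighted, the defining universal property of the weighted completion identifies $\cP(\Ql)$-conjugacy classes of continuous sections of $G\to\pi_1(\cH_{g,n/k},\etabar)$ with those of $\Gtilde\to\D_{g,n}$, giving a natural bijection $H^1_\nab(\pi_1(\cH_{g,n/k},\etabar),\cP)(\Ql)\cong H^1_\nab(\D_{g,n},\cP)(\Ql)$.

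Next I would analyze $H^1_\nab(\D_{g,n},\cP)$ through the lower central series $\cP=\cP^{(1)}\supset\cP^{(2)}\supset\cdots$. Write $\cR$ for the reductive quotient of $\D_{g,n}$ and $\cU$ for its prounipotent radical, with Lie algebras $\r$ and $\u$; the Zariski-density of the hyperelliptic monodromy identifies the image of $\cR$ in $\GL(H)$ with $\GSp(H)$, where $H=H_1(C_\etabar,\Ql)$ is the standard symplectic module and the cyclotomic $\Gm$ supplies the weight grading. The graded quotients $\Gr^i\p$ are finite-dimensional $\GSp(H)$-representations with $\Gr^1\p=H$ and $\Gr^2\p=\wedge^2H/\Ql\,\omega$. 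Splittings of $\Gtilde\to\D_{g,n}$ are built up one central layer at a time: the obstruction to promoting a splitting modulo $\cP^{(i)}$ to one modulo $\cP^{(i+1)}$ lies in $H^2(\D_{g,n},\Gr^i\p)$, and when it vanishes the lifts modulo $\cP^{(i+1)}$-conjugacy form a torsor under $H^1(\D_{g,n},\Gr^i\p)$. Because $\cR$ is reductive, these groups are computed from the radical via $H^\bullet(\D_{g,n},V)=\big(H^\bullet(\u)\otimes V\big)^{\cR}$; in particular $H^1(\D_{g,n},V)=\Hom_{\cR}(H_1(\u),V)$, reducing everything to the $\GSp(H)$-module structure of $H_1(\u)$ and of the relation space controlling $H^2$.

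I would obtain injectivity of $\Phi$ from the level-one invariant alone: the image of a section in the torsor under $H^1(\D_{g,n},H)$ is its fiberwise Abel--Jacobi class, and since distinct points of a curve of genus $g\geq 1$ have distinct Abel--Jacobi images, the $2n$ sections $x_i,Jx_i$ give $2n$ distinct level-one classes. For the complementary statement, that $H^1_\nab(\D_{g,n},\cP)(\Ql)$ contains nothing more, the decisive input is the $\GSp(H)$-module $H_1(\u)$ for the hyperelliptic weighted completion together with the weight bounds from the cyclotomic $\Gm$ and Deligne's purity, available because $p\neq\ell$ and $\chi_\ell$ has infinite image. These should yield $H^1(\D_{g,n},\Gr^i\p)=\Hom_{\cR}(H_1(\u),\Gr^i\p)=0$ for all $i\geq 2$, so that a splitting is determined up to conjugacy by its level-one reduction and $H^1_\nab(\D_{g,n},\cP)(\Ql)$ embeds into the level-one torsor. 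The layer-two obstruction, the quadratic map to $H^2(\D_{g,n},\Gr^2\p)$ induced by the bracket $\wedge^2H\to\Gr^2\p$, then vanishes exactly on the $2n$ Abel--Jacobi classes; since each of these lifts all the way (it is realized by a genuine section), the image of $H^1_\nab$ is precisely the $2n$ geometric classes, and $\Phi$ is a bijection.

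I expect the crux to be this last pair of cohomological inputs: computing $H_1(\u)$ and the relation space for the hyperelliptic weighted completion, proving $H^1(\D_{g,n},\Gr^i\p)=0$ for $i\geq 2$, and identifying the zero locus of the layer-two obstruction with the $2n$ Abel--Jacobi classes. This is where the hypothesis $g\geq 3$ is used and where the argument departs from Hain's treatment of $\M_g$: the $\GSp(H)$-isotypic decomposition of $H_1(\u)$ in the hyperelliptic case differs from that of the full mapping class group, so the weight and purity estimates that annihilate the higher layers and constrain the obstruction must be re-established from the structure of $\D_{g,n}$ developed earlier in the paper.
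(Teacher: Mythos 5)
Your overall architecture --- push a geometric section to a splitting class, transport to the weighted completion $\D_{g,n}$, filter $\cP$ by its lower central series, and control lifts through $H^1$ and $H^2$ of $\d_{g,n}$ --- is the paper's architecture, and your injectivity argument via the level $-1$ classes $\pm\kappa_j/(2g-2)$ is sound. But the two cohomological claims on which your surjectivity argument rests are not correct, and they are exactly where the work lies. First, $H^1(\d_{g,n},\Gr^W_{-l}\p)$ does \emph{not} vanish for all $l\geq 2$. In the proof of Proposition \ref{cup projuct is injective} the coefficient module is split as $(\Gr^W_{-l}\cP)^{\Sp(H)}\oplus V$ with $V$ geometrically nontrivial; only the $V$-part of $H^1$ vanishes, while $H^1(\Delta^\arith_{g,n},(\Gr^W_{-l}\cP)^{\Sp(H)})\cong H^1(G_k,(\Gr^W_{-l}\cP)^{\Sp(H)})$, which is nonzero for a number field or $p$-adic field whenever $\Gr^W_{-l}\p$ has $\Sp(H)$-invariants (this happens in even weights). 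Consequently $H^1_\nab(\D_{g,n},\cP/W_{-l-1})$ at even $l$ is not $2n$ points but $2n$ points times $H^1(G_k,(\Gr^W_{-l}\cP)^{\Sp(H)})$, and one must show these extra torsor directions die upon passing to the next level and hence in the inverse limit. The paper does this via the injectivity of the cup product $H^1(\d_{g,n},H)\otimes H^1(\d_{g,n},\Gr^W_{-l}\p)\to H^2(\d_{g,n},\Gr^W_{-l-1}\p)$ (Proposition \ref{cup projuct is injective}); your proposal has no substitute for this step, and if your vanishing claim were true this proposition would be vacuous.

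Second, your assertion that the layer-two obstruction already cuts the level $-1$ torsor down to the $2n$ geometric classes is unjustified and is false in the hyperelliptic setting. By Theorem \ref{sections of lie algebras}, a level $-1$ class $(a_1,\ldots,a_n)$ extends through weight $-2$ with no constraint on the $a_j$ (the weight $-2$ component is forced to be $\sum_j a_j^2 v_j$ by surjectivity of the bracket, and Lemma \ref{no nichi} kills the $\Gr^W_{-2}\mathfrak{v}_g$ contribution); the constraints $a_j(a_j^2-1)=0$ and $\sum_j a_j^4=1$ only appear at weights $-3$ and $-4$, and the weight $-4$ constraint requires the nontrivial inner bracket $[\xi_{[2^2]},\tilde\xi_{[2^2]}]$ produced from a commuting pair of Dehn twists (Theorem \ref{no zero outer part}, Proposition \ref{nonzero inner bracket}) --- the hyperelliptic surrogate for Hain's $\Lambda^3_0H$ bracket, which in the non-hyperelliptic case already lives in weight $-2$. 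This is precisely why the base case of the induction is taken modulo $W_{-5}$ (Proposition \ref{nonab sections}) rather than modulo $W_{-3}$. Without these two inputs your argument cannot exclude splittings whose level $-1$ invariant is $\sum_j a_ju_j$ with several nonzero $a_j$, nor splittings that agree with a geometric one to some finite level but differ by a nonzero class in $H^1(G_k,(\Gr^W_{-l}\cP)^{\Sp(H)})$.
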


{\em Acknowledgments:} Some of the computations appearing in this paper had been done in 2013 and included in my thesis in 2015 at Duke University.  I am very grateful to my advisor Richard Hain who suggested this problem to me as a graduate student and gave me endless support both in mathematically and personally throughout my graduate life at Duke University. I am also grateful to Makoto Matsumoto who gave me many advises on the study of the arithmetic mapping class groups. I also would like to thank Kevin Kordek and Dan Petersen for numerous discussions on the hyperelliptic mapping class groups. 

\section{A review of Hain's approach and an outline for the hyperellipc case}
Since our main results are the extensions of Hain's work \cite{hain2} to the universal hyperelliptic curves, we will briefly discuss an outline for our main results along Hain's approach and ideas in this section. The main tool that Hain uses is the weighted completion of a profinite group. It is a variant of the relative completion of a discrete group by Deligne.  Associated to a proper smooth family $f: C\to T/k$ of curves of genus $g\geq 2$, there is a monodromy representation 
$$\rho_{\bar x}:\pi_1(T, \bar x)\to  R:=\GSp(H^1_\et(C_{\bar x}, \Ql(1))),$$
where $C_{\bar x}$ is the fiber of $f$ over the geometric point $\bar x$.  The central cocharacter $\omega:\Gm\to R$ is defined by $ a\mapsto a^{-1}\id$. Assuming that the image of $\rho_{\bar x}$ is Zariski-dense in $R$, we have the weighted completion $\cG_T$ of $\rho_{\bar x}$. It is a proalgebraic group over $\Ql$ that is an extension of $R$ by a prounipotent $\Ql$-group $\U_T$. By Levi's theorem, we may choose a splitting $R\to\cG_T$, and hence each finite-dimensional $\cG$-module will admit a natural weight filtration via $\omega$.  Applying the weighted completion to the projection $\pi_1(C)\to \pi_1(T)$, we obtain an exact sequence of proalgebraic $\Ql$-groups
$$1\to \cP\to \cG_C\to \cG_T\to 1,$$
where $\cP$ is the unipotent completion of $\pi_1(C_{\bar x})$ over $\Ql$. Let $\ast\in\{C,T\}$. Denote the Lie algebras of $\cP$, $\cG_\ast$, $\U_\ast$, and $R$ by $\p$, $\g_\ast$, $\u_\ast$, and $\r$, respectively. The adjoint action of $\cG_T$ on the Lie algebra $\g_\ast$ induces a natural weight filtration $W_\bullet\g_T\ast$ satisfying the properties
\begin{enumerate}
	\item $W_{-1}\g_\ast=W_{-1}\u_\ast, W_0\g_\ast=\g_\ast, \text{ and } \Gr^W_0\g_\ast=\r.$
	\item the adjoint action of $\cG_\ast$ on each $\Gr^W_m\u_\ast$ factors through the action of $R$. 
\end{enumerate}
Furthermore, the adjoint action of $\cG_T$ on $\p$ induces a natural weight filtration $W_\bullet\p$, which coincides with the lower central series of $\p$. 
 Hain applies the weighted completion to  the universal family $f:\cC_{g,n}\to \M_{g,n/k}$ for $g\geq 3$, where $k$ is a field of characteristic zero such that the $\ell$-adic cyclotomic character $\chi_\ell:G_k\to \Zl^\times$ is infinite. \\
\indent The first essential part of Hain's work lies in the determination of the $R$-invariant sections of the two-step graded nilpotent Lie algebras
$$\Gr^W_\bullet df_\ast/W_{-3}: \Gr^W_\bullet\u_{C}/W_{-3}\to\Gr^W_\bullet\u_T/W_{-3}.$$
Johnson's fundamental work \cite{joh1} implies that when $n=1$, there is an $R$-invariant isomorphism
$$\Gr^W_{-1}\u_C\cong H_1(\u_C)\cong \Lambda^3_0H\oplus H,$$
where $H=H^1_\et(C_{\bar x}, \Ql(1))$ and $\Lambda^3_0H$ is the quotient $(\Lambda^3H)(-1)$ by the submodule $H\wedge \theta$ with $\theta$ the polarization in $(\Lambda^2H^\ast)(1)$. A key point for determining the sections is that the bracket $[~,~]:\Lambda^2 \Lambda^3_0H\to \Gr^W_{-2}\p$ is nontrivial, that is, the bracket of the outer component maps nontrivially into the inner part of the Lie algebra.  In the hyperelliptic case, we will need to consider down to weight $-4$ and produce the analogue of the bracket computation $[~,~]:\Lambda^2\tambo \to \Gr^W_{-4}\p$, where the representation $\tambo$ is the highest weight part of $(\mathrm{Sym}^2\Lambda^2H)(-1)$. This will be done in Section \ref{hyperelliptic J homo and Dehn twists} by computing the bracket of the images of  two commuting Dehn twists under the hyperelliptic Johnson homomorphism. Another important fact (Proposition \ref{even weight}) for computing the sections of $\Gr^W_\bullet df_\ast/W_{-5}$ for $f:\cC_{\cH_{g,n}}\to\cH_{g,n/k}$ is that $H_1(\u_T)$ with $T=\cH_{g/k}$ is pure of weight $-2$. This is a consequence of the fact that each component of the extended hyperelliptic Torelli space is simply connected.  \\
\indent Secondly, the relation between the sections of $\Gr^W_\bullet df_\ast/W_{-3}$ and the universal curve $f$ is established by the isomorphism 
$$\Hom_R(H_1(\u_T), H)\cong H^1_\et(T, \H).$$
Each tautological section $x_j$ of $f$ yields a characteristic class $\kappa_j$ in $H^1_\et(T, \H)$ and the class $\kappa_j/2g-2$ corresponds to the $j$th projection 
$$H_1(\u_T)\cong \Lambda^3_0H\oplus \bigoplus_{j=1}^nH_j\to H_j, \hspace{.3in}(v; u_1,\ldots,u_n)\mapsto u_j,$$
which determines the $j$th section of $\Gr^W_\bullet df_\ast/W_{-3}$. In the hyperelliptic case, we have the pullback classes in $H^1_\et(\cH_{g,n},\H)$ and  we will show that the $2n$ classes $\pm\kappa_1,\ldots,\pm\kappa_n$ determine the sections of the four-step graded Lie algebra map
$$\Gr^W_\bullet df_\ast/W_{-5}:\Gr^W_\bullet\u_C/W_{-5}\to\Gr^W_\bullet\u_T/W_{-5}$$ associated to the universal hyperelliptic curve. 
The key fact for establishing the bijection between the sections of the universal curve $f$ and that of $\Gr^W_\bullet df_\ast/W_{-3}$ is that two distinct sections of $f$ occupying the same class in $H^1_\et(\M_{g,n},\H)$ must be disjoint and hence there is a morphism 
$\M_{g,n}\to\M_{g,2}$  that induces a homomorphism
$$\Gr^W_\bullet\u_{\M_{g,n}}/W_{-3}\to\Gr^W_\bullet\u_{\M_{g,2}}/W_{-3},$$ but this homomorphism
is not compatible with their Lie algebra structures. In the proof of this fact \cite[Lemma 13.1]{hain2}, the existence of the representation $\Lambda^3_0H$ in $H_1(\u_T)$ plays an essential role, but in our case, there is no such representation. This issue is overcome by using the relations on $\Gr^W_\bullet\p_{g,n}$, which is determined in \cite[Thm.~12.6]{hain0}. \\
\indent Finally, we will consider the commutative diagram

$$\xymatrix{
	1\ar[r]&(\cP/W_{-N})(\Ql)\ar[r]\ar[d]&G_N\ar[r]\ar[d]&\pi_1(T)\ar[r]\ar[d]&1\\
	1\ar[r]&(\cP/W_{-N})(\Ql)\ar[r]      &(\cG/W_{-N}\cP)(\Ql)\ar[r]&\cG_T(\Ql)\ar[r]&1,
}
$$ 
where the right-hand square is the pullback square.  Under the finite-dimensional condition on $H^1(\g_T, \Gr^W_{-m}\p)$ with $1\leq m < N$, the set $H^1_\nab(\cG_T, \cP/W_{-N})(\Ql)$ of the  $(\cP/W_{-N})(\Ql)$-conjugacy classes of the sections of $(\cG/W_{-N}\cP)(\Ql)\to \cG_T(\Ql)$  is in bijection with the set $H^1_\nab(\pi_1(T), \cP/W_{-N})(\Ql)$ of the  $(\cP/W_{-N})(\Ql)$-conjugacy classes of the sections of $G_N\to \pi_1(T)$. For $f:C\to T$  the universal curve of genus $g\geq 3$, there is a bijection between $H^1_\nab(\cG_T, \cP/W_{-N})(\Ql)$ and the $R$-invariant Lie algebra sections of $\Gr^W_\bullet\g_C/W_{-N}\to\Gr^W_\bullet\g_T/W_{-N}$ with $N=3$. In the hyperelliptic case, the same statement holds for $N=5$. In order to compute for all $N$, we will use the ``exact" sequence
$$H^1(\g_T, \Gr^W_{-N}\p)\curvearrowright H^1_\nab(\cG_T, \cP/W_{-N-1})\overset{p}\to H^1_\nab(\cG_T, \cP/W_{-N})\overset{\delta}\to H^2(\g_T, \Gr^W_{-N}\p).$$
The interpretation of this sequence as an exact sequence comes from the property that for each $N$, the restriction of the projection $p$ to $\delta^{-1}(0)$ is a principal $H^1(\g_T, \Gr^W_{-N}\p)$-bundle. Using this exact sequence, we will compute the nonabelian cohomology $H^1_\nab(\cG_T, \cP/W_{-N})$ for all $N$ inductively, and then we will obtain our result by observing 
$$H^1_\nab(\cG_T, \cP)=\varprojlim_N H^1_\nab(\cG_T, \cP/W_{-N}).$$

\section{Hyperelliptic mapping class groups}
Suppose that $\Sigma_g$ is a compact oriented surface of genus $g$. Let $P$ be a set consisting of $n$ points on $\Sigma_g$.  Let $g,n$ be nonnegative integers such that $2g-2+n>0$. The mapping class group $\G_{g,n}$ of the surface $\Sigma_g$ is the group of isotopy classes of orientation preserving diffeomorphisms of $\Sigma_g$ that fix $P$ pointwise:
$$\G_{g,n}=\pi_0 \mathrm{Diff}^+(\Sigma_g, P).$$ The group $\G_{g,n}$ is independent of the choice of $\Sigma_g$ and $P$. When $n=0$, we will denote $\G_{g,0}$ by $\G_g$.  Fix a hyperelliptic involution $\sigma: \Sigma_g\to \Sigma_g$, which is an orientation preserving  diffeomorphism of $S$ of order 2 with $2g+2$ fixed points. The class $[\sigma]$ is unique up to conjugation in $\G_g$. The hyperelliptic mapping class group $\Delta_g$ is defined to be the centralizer of $[\sigma]$ in $\G_g$:
$$\Delta_g:=\{\phi\in \G_g|\phi[\sigma]\phi^{-1}=[\sigma]\}.$$
Define $\Delta_{g,n}$ to be the fiber product
$$\Delta_{g,n}:=\G_{g,n}\times_{\G_g}\Delta_g,$$
where $\G_{g,n}\to\G_g$ is the natural projection obtained by forgetting the $n$ marked points.
Set $H_\Z=H_1(\Sigma_g, \Z)$. The group $\Delta_g$ acts on $H_\Z$, which gives a natural representation $$\rho:\Delta_g\to\Sp(H_\Z).$$ Denote the image of $\rho$ by $G_g$ and the kernel by $T\Delta_g$. The group $G_g$ is a finite-index subgroup of $\Sp(H_\Z)$ by a result of A'Campo \cite{Acamp}. Taking mod $r$ induces the surjection $\Sp(H_\Z)\to \Sp(H_{\Z/r\Z})$ and define the level $r$ subgroup $\Delta_g[r]$ to be the kernel of the composition 
$$\Delta_g\to \Sp(H_\Z)\to \Sp(H_{\Z/r\Z}).$$
Then define the level $r$ subgroup $\Delta_{g,n}[r]$ of $\Delta_{g,n}$ to be the fiber product 
$$\Delta_{g,n}[r]:=\Delta_{g,n}\times_{\Delta_g}\Delta_{g}[r].$$ 
If $r\geq 3$, then the group $\Delta_{g,n}[r]$ is torsion free. We will denote the image of $\Delta_g[r]$ in $\Sp(H_\Z)$ by $G_g[r]$. It is the kernel of the map $G_g\to \Sp(H_{\Z/r\Z})$. For each $r\geq 1$, there is an exact sequence
$$1\to T\Delta_g\to \Delta_g[r]\to G_g[r]\to 1.$$

Define also $\Delta_{g,(1)}$ to be the subgroup of $\Delta_g$ that fixes a particular Weierstrass point. More precisely, define $\Delta_{g,(1)}$ as follows. Denote the set of $2g+2$ fixed points of $\sigma$ by $W$. The hyperelliptic mapping class group $\Delta_g$ acts on the set $W$ and it is easily seen to be surjective.  
Fixing a Weierstrass point $p$, we have the stabilizer subgroup $\Aut(W)_p$ of $\Aut(W)$.  Define $\Delta_{g,(1)}$ to be the pullback of $\Aut(W)_p$ along the map $\Delta_g\to \Aut(W)$: there is a commutative diagram
$$
\xymatrix{\Delta_{g,(1)}\ar[r]\ar[d]&\Delta_g\ar[d]\\
	\Aut(W)_p\ar[r]&\Aut(W).
}
$$

\section{Moduli stacks of smooth hyperelliptic curves}
\subsection{Analytic approach}
Let $X_{g,n}$ be the Teichm$\ddot{\text{u}}$ller space of marked, $n$-pointed, compact complex curves of genus $g$. As a set, the space $X_{g,n}$ consists of the isotopy classes of markings $[f:(\Sigma_g,P)\to (C, x_1,\ldots, x_n)]$. The mapping class group $\G_{g,n}$ acts on $X_{g,n}$ by $[\phi]:[f]\mapsto [f\circ \phi]$. It is known that this action is properly discontinuous and virtually free. It follows from results in \cite{ear} that the locus $X^{\hyp,\sigma}_g$ fixed by $\sigma$ is a complex submanifold of $X_g$ that is biholomorphic to $X_{0,2g+2}$ an hence contractible of dimension $2g-1$. The subgroup of $\G_g$ that acts on $X^{\hyp,\sigma}_g$ is exactly $\Delta_g$.  Since every hyperelliptic involution is conjugate to each other in $\G_g$,  the hyperelliptic locus  $X_g^\hyp$ in $X_g$ is the disjoint union 
$$
X^\hyp_g=\bigcup_{\phi\in \G_g/\Delta_g}\phi\cdot X^{\hyp,\sigma}_g=\bigcup_{\phi\in \G_g/\Delta_g} X^{\hyp,\phi\sigma\phi^{-1}}_g.$$
As an orbifold, the moduli stack of smooth projective hyperelliptic curves of genus $g$ is the orbifold quotient $$\cH_g=\Delta_g\backslash X_g^{\hyp,\sigma}.$$

\subsection{Algebraic approach}
Assume that $k$ is a field of characteristic zero. In this paper, all curves are connected, proper and smooth. Denote the moduli functor of hyperelliptic curves over $k$ by $\cH_{g/k}$. It is a contravariant functor from the category of $k$-schemes to the category of sets:
$$\cH_{g/k}: Sch_{/k}\to Set$$
which assigns to each $k$-scheme $T$ the set of isomorphism classes of families of hyperelliptic curves of genus $g$:
$$\cH_{g/k}(T)=\{C\to T\text{ family of hyperelliptic curves of genus $g$}\}/\cong.$$ It was shown by Arsie and Vistoli \cite{AV} that it is an irreducible smooth Deligne-Mumford stack of finite type over $k$ of dimension $2g-1$.  When $k\subset \C$,  the corresponding analytic stack is isomorphic to $\cH_{g}$. Also it is a closed substack of the moduli stack $\M_{g/k}$ of smooth proper curves of genus $g$ over $k$. Define $\cH_{g,n/k}$ to be the fiber product
$$\cH_{g,n/k}=\M_{g,n/k}\times_{\M_{g/k}}\cH_{g/k}.$$ Denote the universal curve over $\cH_{g,n/k}$ by $\cC_{\cH_{g,n}/k}\to \cH_{g,n/k}$. It is the pullback of $f:\cC_{g,n/k}\to \M_{g,n/k}$ to $\cH_{g,n/k}$, which we will also denote by $f$. 

\subsection{Curves of compact type}
Denote by $\overline{\M}_{g,n/k}$ the Deligne-Mumford compactification  of the moduli stack $\M_{g,n/k}$ of proper smooth $n$-pointed curves of genus $g$ over $k$. A projective nodal curve $C$ is said to be of compact type if its dual graph is a tree or equivalently $\Pic^0C$ is an abelian variety. The moduli stack $\M^{c}_{g,n}$ of $n$-pointed curves of genus $g$ of compact type is the complement $\overline{\M}_{g,n/k}-\delta_0$ of the divisor $\delta_0$, whose generic point corresponds to an irreducible $n$-pointed curve with one node. The moduli stack $\cH_{g,n/k}^{c}$ of $n$-pointed hyperelliptic curves of genus $g$ of compact type is the closure of $\cH_{g,n/k}$ in $\M_{g,n/k}^{c}$.   The moduli stack of $n$-pointed curves of genus $g$ with an abelian level $r$ will be denoted by $\M_{g,n/k}[r]$. It is given by fixing an isomorphism $\phi:\mathrm{R}^1f_\ast\Z/r\Z\cong (\Z/r\Z)^{2g}$ that respects their respective symplectic structures. 
For a standard reference for level structure on $\M_{g/k}$, see \cite{DM}.  In order to extend the abelian level to $\M^c_{g,n/k}$, note that the Torelli map $\M_{g,n/k}\to \A_{g/k}$ extends to $\M^c_{g,n/k}$, where $\A_{g/k}$ is the moduli stack of principally polarized abelian varieties of dimension $g$ over $k$. A similar construction gives an abelian level $r$ structure $\A_{g/k}[r]$, which is a finite \'etale cover of $\A_{g/k}$. The pullback of $\A_{g/k}[r]$ to $\M^c_{g,n/k}$ will be denoted by $\M^c_{g,n/k}[r]$. The level $r$ structure $\cH_{g,n/k}[r]$ denotes a particular connected component of the pullback of $\M_{g,n/k}[r]$ to $\cH_{g,n/k}$ and the closure of  $\cH_{g,n/k}[r]$ in $\M^c_{g,n/k}[r]$ will be denoted by $\cH^c_{g,n/k}[r]$. In this paper, we always assume that the base field $k$ contains the $r$th root of unity $\mu_r(\bar k)$ so that $\cH^c_{g,n/k}[r]$ is a geometrically connected smooth stack over $k$.  When $r\geq 3$, it is a smooth quasi-projective variety over $k$. \\	
\indent The Torelli space denoted by $\T_g$ is the quotient space $T_g\backslash X_g$, where $T_g$ is the Torelli group of genus $g$ that is the kernel of the natural representation $\G_g\to \Sp(H_\Z)$. The deformation theory of stable curves implies that there is the extended Torelli space $\T^{c}_g$ that contains $\T_g$ as an open dense subset . It is a complex manifold that parametrize curves of compact type with a homology framing. The hyperelliptic Torelli space denoted by $\T^\hyp_g$ is the image of $X^\hyp_g$ in $\T_g$, which is a finite disjoint union of mutually isomorphic irreducible components, each of which is isomorphic to the quotient $T\Delta_g\backslash X^{\hyp,\sigma}_g$. Denote the quotient $T\Delta_g\backslash X^{\hyp,\sigma}_g$ by $\cH_g[0]$. The closure of $\cH_g[0]$ in $\T^c_g$ will be denoted by $\cH^c_g[0]$. It was proved by Brendle, Margalit, and Putman in \cite[Thm.~B]{BMP} that $\cH^c_g[0]$ is simply connected. This fact plays a key role in the proof of Proposition \ref{even weight}. For each $r\geq 1$, the action of $G_g[r]$ on $\cH_g[0]$ extends to $\cH^c_g[0]$, and there are isomorphisms  
$$\cH_g[r]\cong G_g[r]\backslash\cH_g[0] \text{ and } \cH^c_g[r]\cong G_g[r]\backslash\cH_g^c[0]$$ as orbifolds. 

\begin{variant}
	Define $\cH_{g,(1)}$ to be the orbifold quotient $\Delta_{g,(1)}\backslash X_{g}^{\hyp,\sigma}$. The stack $\cH_g[2]$ is a finite Galois cover of $\cH_g$ that factors through the cover $\cH_{g,(1)}$:
$$	\cH_{g}[2]\to\cH_{g,(1)}\to\cH_{g}.
	$$
	
\end{variant}

\begin{variant}
	Denote the $n$th power of the universal curve $\M_{g/k}$ by $\cC_{g/k}^n$ and the $n$th power of the universal hyperelliptic curve over $\cH_{g/k}$ by $\cC_{\cH_g/k}^n$. By convention, we set $\cC^0_{g/k}=\M_{g/k}$ and there is an isomorphism  $\cC^1_{g/k}\cong \M_{g,1/k}$.
	Note that $\M_{g,n/k}$ is an open substack of $\cC_{g/k}^n$ and that $\cH_{g,n/k}$ is an open substack of $\cC_{\cH_g/k}^n$. We have the following fiber product diagram
	$$
	\xymatrix{
		\cH_{g,n/k}\ar[r]\ar[d]&\cC_{\cH_g/k}^n\ar[r]\ar[d]&\cH_{g/k}\ar[d]\\
		\M_{g,n/k}\ar[r]       &\cC_{g/k}^n\ar[r]          &\M_{g/k}.
	}
	$$
	Forgetting the first component of $\cC^{n+1}_{g/k}$  gives a curve $\pi:\cC^{n+1}_{g/k}\to \cC^{n}_{g/k}$. Pulling back $\pi$ to $\M_{g,n/k}$, we obtain the universal curve $\cC_{g,n/k}\to \M_{g,n/k}$, which pulls back to the universal hyperelliptic curve $\cC_{\cH_{g,n}/k}\to\cH_{g,n/k}$.
\end{variant}

\section{Fundamental groups of $\pi_1(\cH_{g,n/k})$ and their monodromy representations}
\subsection{Monodromy action associated to the universal hyperelliptic curves}
Fix an algebraic closure $\bar k$ of $k$. By the comparison theorem \cite{noo2}, we have a unique conjugacy class of isomorphisms 
$$\pi_1(\cH_{g,n/\bar k})\cong \pi_1^\orb(\cH_{g,n})\widehat{~}\cong (\Delta_{g,n})\widehat{~}$$
between the \'etale fundamental group of $\cH_{g,n/\bar k}$ and the profinite completion of its orbifold fundamental group.
 Let $\bar \eta:\Spec \Omega\to \cH_{g,n/\bar k}$ be a geometric point of $\cH_{g,n/\bar k}$. We also regard $\bar \eta$ as a geometric point of $\cH_{g,n/k}$ via the natural morphism $\cH_{g,n/\bar k}\to\cH_{g,n/k}$. The fundamental group  $\pi_1(\cH_{g,n/k}, \bar \eta)$ sits in the exact sequence of \'etale fundamental groups
$$1\to \pi_1(\cH_{g,n/\bar k}, \bar \eta)\to \pi_1(\cH_{g,n/k}, \bar \eta)\to G_k\to 1. $$\\
Let $\ell$ be a prime number. For $A=\Zl$ or $A=\Ql$, set $\H_{A}=\mathrm{R}^1f_\ast A(1)$, and let $H_A=(\H_A)_{\bar \eta}=H^1_\et(C_{\bar\eta}, A(1))$, where $C_{\bar\eta}$ is the fiber of the universal hyperelliptic curve $f: \cC_{\cH_{g,n}/k}\to\cH_{g,n/k}$ over the geometric point $\bar\eta$.
The cohomology group $H_A$ is equipped with a nondegenerate alternating form $\theta: H_A\otimes H_A\to A(1)$.  Associated to the local system $\H_A$, there is a natural monodromy representation 
$$\rho_\etabar:\pi_1(\cH_{g,n/ k}, \bar\eta)\to \GSp(H_\Zl).$$ The restriction to the geometric part, $\rho^\geom_\etabar:\pi_1(\cH_{g,n/\bar k}, \bar{\eta})\to \Sp(H_\Zl)$, agrees with the profinite completion of the action of $\Delta_g$ on $H_1(\Sigma_g,\Z)\otimes\Zl$. It follows from the result of A'Campo \cite{Acamp} that the image of $\rho^\geom_\etabar$ in $\Sp(H_\Zl)$ contains a finite-index subgroup and hence that $\rho^\geom_\etabar$ has a Zariski-dense image in $\Sp(H_\Ql)$. The monodromy representations $\rho_\etabar$ and $\rho^\geom_\etabar$ fit in the commutative diagram
$$\xymatrix{
	1\ar[r]&\pi_1(\cH_{g,n/\bar k}, \bar\eta)\ar[r]\ar[d]^{\rho^\geom_\etabar}&\pi_1(\cH_{g,n/k}, \bar\eta)\ar[r]\ar[d]^{\rho_\etabar}&G_k\ar[r]\ar[d]^{\chi_\ell}&1\\
	1\ar[r]&\Sp(H_{\Ql})\ar[r]&\GSp(H_\Ql)\ar[r]&\Gm_{/\Ql}\ar[r]&1,
}
$$
where $\chi_\ell$ is the $\ell$-adic cyclotomic character. If the image of $\chi_\ell$ is infinite, then the Zariski density of $\rho^\geom_\etabar$ implies that $\rho_\etabar$ also has a Zariski-dense image. \\
\subsection{$\GSp$-representations} Let $H$ be a $\Q$-vector space of dimension $2g$ equipped with a nondegenerate alternating form $\theta: \Lambda^2H\to \Q$. . The group of symplectic similitudes of $H$ is
$$\GSp(H)=\left\{\phi\in \GL(H)|\text{there exists a unit $u_\phi$ in $\Q$ such that }\phi^\ast\theta=u_\phi\theta \right\}.$$ The association $\phi\mapsto u_\phi$ defines a surjective homomorphism $\tau:\GSp(H)\to \Gm_{/\Q}$ and the symplectic group $\Sp(H)$ is the kernel of $\tau$: the sequence
$$1\to\Sp(H)\to \GSp(H)\overset{\tau}\to \Gm_{/\Q}\to1$$ is exact. We may consider $\theta$ as a $\GSp(H)$-invariant homomorphism
$$\theta:\Lambda^2H\to \Q(1),$$
where $\GSp(H)$ acts on $\Q(1)$ via the homomorphism $\tau$.  \\
\indent Each finite dimensional irreducible representation of $\Sp(H)$ is given by a partition $\lambda$ of a nonnegative integer $n$ into $l\leq g$ parts
$$n=\lambda_1+\lambda_2+\cdots+\lambda_l \text{ with } \lambda_1\geq\lambda_2\geq\cdots\geq\lambda_l>0.$$
 We will denote the isomorphism class of the irreducible representation corresponding to the partition $\lambda$ by $V_{\lambda}$. For a standard reference, see  \cite{ful}. Denote by $\Q(m)$ the one-dimensional $\GSp(H)$-representation on which $\GSp(H)$ acts via the $m$th power of $\tau$. Then the isomorphism classes of irreducible representations of $\GSp(H)$ are given by $V_\lambda(m):=V_\lambda\otimes \Q(m)$ with $m\in \Z$. We will call tensoring with $\Q(m)$ a Tate twist as in Hodge Theory.  \\
\indent In order to set weights on the $\GSp(H)$-representations, we will define the central cocharacter 
$$\omega: \Gm\to \GSp(H), \hspace{.3in} a\mapsto a^{-1}\id_H.$$ In this choice of $\omega$, the $\Gm$-representation weights coincide with the weights determined by Frobenius and by Hodge theory. The representation $V_\lambda$ has weight equal to $-|\lambda|$, where $|\lambda|=\lambda_1+\cdots+\lambda_l$. The one-dimensional representation $\Q(m)$ has weight equal to $-2m$. Therefore, the representation $V_\lambda (m)$ has weight $|V_\lambda(m)|=-|\lambda|-2m$. \\

\subsection{General symplectic local systems} The fact that $\GSp(H)$ is split over $\Q$ implies that the representations of $\GSp(H_\Ql)$ are obtained from those of $\GSp(H)$ by tensoring with $\Ql$, and the isomorphism classes of irreducible representations of $\GSp(H_\Ql)$ will be also denoted by $V_\lambda(m)$. \\
 \indent For each irreducible  $\GSp(H_\Ql)$-representation $V_\lambda(m)$, there is a corresponding local system $\V_\lambda(m)$ over $\cH_{g,n/k}$ via $\rho_\etabar$. That the kernel of $\pi_1(\M_{g,n/\bar k},\etabar)\to \pi_1(\M_{g,n/\bar k}^c,\etabar)$ acts trivially on $H_\Ql$ implies that for $r\geq 1$, the monodromy representation $\rho_\etabar$ factors through $\pi_1(\cH^c_{g,n/k}, \etabar)$, i.e., there is a monodromy representation $\pi_1(\cH^c_{g,n/k}[r],\etabar)\to \GSp(H_\Ql)$. Thus the representation $V_\lambda(m)$ defines a local system $\V_\lambda(m)$  over $\cH^c_{g,n/k}[r]$ that extends the one over $\cH_{g,n/k}[r]$. 
\section{The hyperelliptic Johnson homomorphisms and Dehn twists} \label{hyperelliptic J homo and Dehn twists}

In this section, we briefly introduce the hyperelliptic analogue of the Johnson homomorphism. 
First we recall the Johnson homomorphism. Let $\Pi$ be the topological fundamental group $\pi_1^\top(\Sigma_g, x)$  of the surface $\Sigma_g$.  Denote the lower central series of $\Pi$ by $L_\bullet\Pi$: 
$$L_1\Pi=\Pi\text{ and }L_k\Pi=[L_{k-1}\Pi, \Pi]\text{ for }k\geq 2.$$
Let  $\Gr^L_m\Pi:=L_m\Pi/L_{m+1}\Pi$ for each $m\geq 1$.  Each graded quotient $\Gr^L_m\Pi$ is a torsion-free $\Sp(H_1(\Sigma,\Z))$-module of finite rank. 
Let $H=H_1(\Sigma_g, \Z)$ equipped with the standard symplectic basis $a_1, b_1, \ldots, a_g, b_g$ and the intersection paring $\theta: H\otimes H\to \Z$. The form $\theta$ is a nondegenerate skew-symmetric bilinear form. Having fixed a symplectic basis for $H$, we may identify $\Sp(H_1(\Sigma_g,\Z))$ with $\Sp_g(\Z)$, the group of $2g\times 2g$ symplectic matrices with entries in $\Z$.  We also regard $\theta$ as an element of $\Lambda^2H$ via the isomorphism $H\cong H^\ast$ induced by $\theta$. Then $\theta=\sum_{i=1}^g a_i\wedge b_i$.
 Note that the mapping class group $\G_{g,1}$ acts on $\Pi$ and there is an exact sequence of $\G_{g,1}$-groups
$$1\to \Gr^L_2\Pi\to \Pi/L_3\Pi\to \Gr^L_1\Pi\to 1$$ Identifying $\Gr_L^1\Pi\cong H$ and $\Gr_L^2\Pi\cong \Lambda^2H/\langle \theta\rangle$, this exact sequence can be rewritten as
$$1\to \Lambda^2H/\langle \theta\rangle \to \Pi/L^3\Pi\to H\to 1.$$  Recall that the Torelli group $T_{g,1}$ is the kernel of the natural representation $\G_{g,1}\to \Sp(H)$, which is known to be surjective.  Then the Johnson homomorphism 
$$\tau: T_{g,1}\to \Hom(H,\Lambda^2H/\langle \theta\rangle)$$
is defined as follows. For $u\in H$, let $\tilde{u}$ be a lift of $u$ in $\Pi/L_3\Pi$. For an element $\phi\in T_{g,1}$, the element $\phi(\tilde{u})\tilde{u}^{-1}$ lies in $\Lambda^2H/\langle \theta\rangle$, since $\phi$ acts trivially on $H$ by definition. The Johnson homomorphism $\tau$ is defined to be the map $\phi\mapsto (u\mapsto\phi(\tilde{u})\tilde{u}^{-1})$.  It can be easily checked that $\tau(\phi)$ is independent of the choice of the life $\tilde{u}$ of $u$.  Johnson showed in \cite{joh1, joh} that $\im\tau =\Lambda^3H$ for $g\geq 2$ and $H_1(T_{g,1},\Z)\otimes\Z[\frac{1}{2}]=\Lambda^3H\otimes \Z[\frac{1}{2}]$ for $g\geq3$.\\
\indent Recall that the hyperelliptic Torelli group $T\Delta_g$ is the kernel of the natural representation $\Delta_g\to \Sp(H)$ or equivalently defined to be the intersection of $T_g$ and $\Delta_g$ in $\G_g$. Since $T\Delta_g$ fixes each Weierstrass point, $T\Delta_g$ is a subgroup of $\Delta_{g,(1)}$. 
 A construction similar to one for the Johnson homomorphism  gives a $\Delta_{g,(1)}$-equivariant homomorphism 
$$ \omega: T\Delta_g\to \Hom(H,\Lambda^2H/\langle \theta\rangle).$$
However, this homomorphism is trivial, since the hyperelliptic involution $\sigma$ acts trivially on $T\Delta_g$, while it acts as $-\id$ on $\Hom(H,\Lambda^2H/\langle \theta\rangle)$. Thus this suggests that we consider the next graded quotient $\Gr^L_3\Pi$ and the exact sequence of $\Delta_{g,(1)}$-groups
$$1\to \Gr^L_3\Pi\to \Pi/L_4\Pi\to \Pi/L_3\Pi\to 1.$$
The triviality of the homomorphism $\omega$ implies that $T\Delta_g$ acts trivially on $\Pi/L_3\Pi$. Thus the same construction for the Johnson homomorphism gives a $\Delta_{g,(1)}$-equivariant homomorphism 
$$\tau^\hyp:T\Delta_g\to \Hom(\Pi/L^3\Pi, \Gr_L^3\Pi).$$
Since $\Gr_L^3\Pi$ is abelian and the abelianization of $\Pi/L_3\Pi$ is $\Gr^L_1\Pi=H$, we have 
$$\Hom(\Pi/L_3\Pi, \Gr^L_3\Pi)=\Hom(H, \Gr^L_3\Pi).$$
This $\Delta_{g,(1)}$-equivariant homomorphism 
$$\tau^\hyp: T\Delta_g\to \Hom(H, \Gr^L_3\Pi)$$
is defined to be the hyperelliptic Johnson homomorphism. 
Note that the restriction of $\tau^\hyp$ to the kernel $N$ of $\tau^\hyp$ induces a homomorphism
$$\tau^\hyp_2:N\to \Hom(H, \Gr_5^L\Pi).$$ More generally, if $N_j$ is the kernel of the homomorphism $\Delta_{g,(1)}\to \Aut(\Pi/L_{2j}\Pi)$, then there is a homomorphism
$$\tau^\hyp_j:N_j\to\Hom(H, \Gr_{2j+1}^L\Pi).$$
Note that $N_1=T\Delta_g$.
\subsection{The image of a Dehn twist under $\tau^\hyp$}\label{The image of Dehn twist}
In this section, set $H=H_1(\Sigma_g, \Q)$. For each $j=1,\ldots, g-1$, let $C_j$ be the separating simple closed curve in $\Sigma_g$ such that $C_j$ separates $\Sigma_g$ into two components $S'_j$ and $S''_j$ of genus $j$ and $g-j$, respectively. 
 Let $\theta'_j=\sum_{i=1}^ja_i\wedge b_i$ and $\theta''_j=\sum_{i=j+1}^ga_i\wedge b_i$. Note that $\theta=\theta'_j+\theta''_j$.  Let $\mathbb{L}(H)$ be the free Lie algebra generated by $H$. 
Denote by $\p$ the pronilpotent Lie algebra of the unipotent completion (Malcev completion see \cite{Qui}) of $\Pi$ over $\Q$.
  There is an isomorphism of graded Lie algebras $\Gr^L_\bullet\Pi\otimes \Q\cong \Gr^L_\bullet\p$. It follows from the main  result in \cite{DGMS}  that the associated graded Lie algebra $\Gr^L_\bullet\p$ has a  minimal presentation 
$$ \Gr^L_\bullet\p\cong \mathbb{L}(H)/\langle\theta\rangle.$$ 
Denote $\Gr^L_m\Pi\otimes \Q\cong \Gr^L_m\p$ by $\p(m)$.  In low degrees, we have 
$$\p(1)=H,\,\,\, \p(2)=\Lambda^2H/\langle\theta\rangle, \text{ and }\p(3)=\p(1)\otimes\p(2)/\Lambda^3\p(1).$$
We will express the vector in $\p(m)$ by the bracket of the vectors in $H$ of length $m$. 
Define a map $\phi:\mathrm{Sym}^2\Lambda^2H\to \Hom(H,\p(3))$ by
\begin{align*}
(u_1\wedge v_1)\cdot(u_2\wedge v_2)\mapsto &\Big( x\mapsto \theta(u_1, x)[v_1,[u_2,v_2]]-\theta(v_1, x)[u_1,[u_2,v_2]]\\
&+\theta(u_2, x)[v_2,[u_1,v_1]]-\theta(v_2,x)[u_2,[u_1,v_1]]\Big)
\end{align*}
It is easy to see that $\phi$ is an $\Sp(H)$-invariant homomorphism. 
\begin{lemma}\label{image of theta square}
	For $I\subset\{1,\ldots,g\}$, set $\theta_I=\sum_{i\in I}a_i\wedge b_i$, $H_I=\mathrm{Span}\{ a_i, b_i|i\in I\}$, and 
	$H^c_I=\mathrm{Span}\{ a_i, b_i|i\not \in I\}$. 
	Then we have
	\[\phi(\theta^2_I):x\mapsto \left\{
	\begin{array}{ll}
	\phantom{} 0 & \quad x \in H^c_I \\
	\phantom{}2[x,\theta_I] & \quad x\in H_I
	\end{array}
	\right.\]     
\end{lemma}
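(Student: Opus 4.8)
The plan is to prove the lemma by a direct computation, using the bilinearity of $\phi$ together with the explicit values of the symplectic pairing on the standard basis. Since $\phi$ is linear and $\theta_I = \sum_{i\in I} a_i\wedge b_i$, the first step is to expand
$$\theta_I^2 = \sum_{i,j\in I}(a_i\wedge b_i)\cdot(a_j\wedge b_j)\in \mathrm{Sym}^2\Lambda^2 H,$$
so that $\phi(\theta_I^2) = \sum_{i,j\in I}\phi\big((a_i\wedge b_i)\cdot(a_j\wedge b_j)\big)$. I would then apply the defining formula for $\phi$ to each summand, taking $u_1=a_i$, $v_1=b_i$, $u_2=a_j$, $v_2=b_j$; this reduces the whole problem to evaluating the four pairings $\theta(a_i, x)$, $\theta(b_i, x)$, $\theta(a_j, x)$, $\theta(b_j, x)$ with $i,j\in I$ against a test vector $x$.

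Because $H = H_I\oplus H^c_I$ and both sides are linear in $x$, it is enough to check the two asserted cases on basis vectors. For $x\in H^c_I$, that is $x=a_m$ or $x=b_m$ with $m\notin I$, every pairing $\theta(a_i, x)$ and $\theta(b_i, x)$ with $i\in I$ vanishes, since $a_i$ and $b_i$ pair nontrivially only with $b_i$ and $a_i$, whose indices lie in $I$. Hence every coefficient in the formula is zero and $\phi(\theta_I^2)(x)=0$, which is the first case.

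For the second case I would take $x=a_k$ with $k\in I$ (the case $x=b_k$ being entirely analogous). Using $\theta(a_i,a_k)=0$ and $\theta(b_i,a_k)=-\delta_{ik}$, and likewise for the index $j$, each summand collapses to $\delta_{ik}[a_i,[a_j,b_j]]+\delta_{jk}[a_j,[a_i,b_i]]$, where the minus signs in the formula cancel against those in the pairing so that the two terms add. Summing over $i,j\in I$ collapses one index in each term and leaves $\sum_{j\in I}[a_k,[a_j,b_j]]+\sum_{i\in I}[a_k,[a_i,b_i]]$. Recognizing that $\theta_I=\sum_{i\in I}[a_i,b_i]$ in $\p(2)=\Lambda^2 H/\langle\theta\rangle$, both sums equal $[a_k,\theta_I]$, and therefore $\phi(\theta_I^2)(a_k)=2[a_k,\theta_I]$, as required.

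The computation is essentially routine, so there is no deep obstacle; the only points demanding care are the sign conventions in the symplectic form, which is what makes the two families of terms reinforce rather than cancel, and the observation that the separate sums over $i$ and over $j$ each reassemble into one bracket against $\theta_I$, which is precisely the origin of the factor $2$. I would also note that the brackets must be read in $\Gr^L_3\p = \p(3)$, so that the defining relation $\theta\equiv 0$ is already incorporated in the target and no further checking of well-definedness is needed.
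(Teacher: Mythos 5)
Your computation is correct and is exactly the ``simple computation'' that the paper's one-line proof alludes to: expanding $\theta_I^2=\sum_{i,j\in I}(a_i\wedge b_i)\cdot(a_j\wedge b_j)$, applying the defining formula for $\phi$, and using $\theta(a_i,b_j)=\delta_{ij}$, $\theta(b_i,a_j)=-\delta_{ij}$ to see that the two surviving families of terms each reassemble into $[x,\theta_I]$, giving the factor $2$. No gaps; the sign bookkeeping and the collapse of the double sum are handled correctly.
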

\begin{proof}A simple computation suffices.
\end{proof}
Let $\omega_j$ be the isotopy class of the Dehn twist around $C_j$. For simplicity, fix a Weierstrass point $p$ in $S'_1$. Note that each $\omega_j$ is an element in $T\Delta_g$.  In order to compute $\tau^\hyp(\omega_j)$, we need to compute the action of $\omega$ on $\pi_1(\Sigma_g, p)$. We do this by computing the action of $\omega_j$ on the standard generating set of $\pi_1(\Sigma_g, p)$ consisting of the classes $\gamma_{2i-1}$, $\gamma_{2i}$  based at $p$ for $i=1,\ldots, g$, where the homology classes of $\gamma_{2i-1}$ and $\gamma_{2i}$ are $a_i$ and $b_i$, respectively. 
\begin{proposition}\label{image of a dehn twist}\label{hyp j action}
	With notation as above, we have
	$$\tau^\hyp(\omega_j)=\frac{1}{2}\phi((\theta''_j)^2):x\mapsto \left\{
	\begin{array}{ll}
	\phantom{} 0 & \quad x \in H^c_I \\
	\phantom{}[x,\theta''_j] & \quad x\in H_I
	\end{array}
	\right.$$
	where $I=\{j+1,\ldots,g\}$.
\end{proposition}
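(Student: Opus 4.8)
The plan is to compute $\tau^\hyp(\omega_j)$ directly from its definition, by determining the action of the Dehn twist $\omega_j$ on $\Pi=\pi_1(\Sigma_g,p)$ modulo $L_4\Pi$ and reading off the induced map $H\to\p(3)$. First I would record the action of $\omega_j$ on the standard generators $\gamma_1,\dots,\gamma_{2g}$. Since $p\in S'_1\subseteq S'_j$ and $C_j$ separates $\Sigma_g$ into the genus-$j$ piece $S'_j$ (containing $p$) and the genus-$(g-j)$ piece $S''_j$, the generators lying in $S'_j$, namely $\gamma_1,\dots,\gamma_{2j}$ with classes $a_i,b_i$ for $i\leq j$, do not meet $C_j$ and are fixed by $\omega_j$. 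Each generator lying in $S''_j$, that is $\gamma_{2j+1},\dots,\gamma_{2g}$ with classes $a_i,b_i$ for $i>j$, crosses $C_j$ and is conjugated by the based boundary loop $\delta_j:=\prod_{i=1}^{j}[\gamma_{2i-1},\gamma_{2i}]$, so that $\omega_j(\gamma)=\delta_j\gamma\delta_j^{-1}$ for each such generator once the connecting paths are chosen compatibly with $\delta_j$.

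Next, because $\omega_j\in T\Delta_g$, the triviality of $\omega$ guarantees that $\omega_j$ acts trivially on $\Pi/L_3\Pi$; hence for $u\in H$ with lift $\tilde u$ the element $\omega_j(\tilde u)\tilde u^{-1}$ lies in $L_3\Pi/L_4\Pi=\p(3)$, and $u\mapsto \omega_j(\tilde u)\tilde u^{-1}$ is the additive map $\tau^\hyp(\omega_j)\in\Hom(H,\p(3))$. Evaluating on generators: for $x\in H^c_I$ (that is $i\leq j$) the generator is fixed, so $\tau^\hyp(\omega_j)(x)=0$. For $x\in H_I$ (that is $i>j$) one has $\omega_j(\gamma)\gamma^{-1}=[\delta_j,\gamma]$, and since $\delta_j\in L_2\Pi$ has graded class $\theta'_j\in\p(2)$ while $\gamma$ has class $x\in\p(1)$, the group commutator $[\delta_j,\gamma]$ has graded class $[\theta'_j,x]\in\p(3)$.

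It then remains to match this with the stated expression. Using $\theta=\theta'_j+\theta''_j\equiv 0$ in $\p(2)=\Lambda^2H/\langle\theta\rangle$, we get $[\theta'_j,x]=-[\theta''_j,x]=[x,\theta''_j]$, so $\tau^\hyp(\omega_j)(x)=[x,\theta''_j]$ for $x\in H_I$. Comparing with Lemma \ref{image of theta square} applied to $I=\{j+1,\dots,g\}$, which gives $\phi((\theta''_j)^2)(x)=0$ for $x\in H^c_I$ and $\phi((\theta''_j)^2)(x)=2[x,\theta''_j]$ for $x\in H_I$, I would conclude that $\tau^\hyp(\omega_j)=\tfrac12\phi((\theta''_j)^2)$, as claimed.

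The main obstacle is pinning down the Dehn twist action precisely: verifying that each $S''_j$-generator is conjugated by exactly $\delta_j$, with the correct orientation, rather than by a conjugate or the inverse of $\delta_j$, and that a single crossing of $C_j$ contributes exactly one copy of the boundary loop. Everything downstream is a formal computation in the associated graded Lie algebra $\Gr^L_\bullet\p$, but it is precisely this first step that fixes the sign distinguishing $[x,\theta''_j]$ from $-[x,\theta''_j]$, so the orientation conventions for $\omega_j$ and for the connecting paths must be tracked carefully.
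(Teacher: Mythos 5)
Your proposal is correct and follows essentially the same route as the paper: fix the generators lying on the side of $C_j$ containing the base point, observe that the remaining generators are conjugated by the boundary word $\prod_{k=1}^{j}[\gamma_{2k-1},\gamma_{2k}]$, reduce modulo $L_4\Pi$ to get $[\theta'_j,x]$, and use the relation $\theta'_j+\theta''_j=\theta\equiv 0$ in $\p$ to rewrite this as $[x,\theta''_j]$ before matching with Lemma \ref{image of theta square}. The orientation issue you flag is exactly the point the paper handles by declaring how $C_j$ is traversed, so nothing further is missing.
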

\begin{proof}
	Since  the classes  $\gamma_{i}$ for $i\leq 2j$ do not intersect with $C_j$, we have $\omega_j(\gamma_i)=\gamma_i$ for $i\leq 2j$. Now, fix a point $y_j$ on $C_j$ and a path $\rho_j$ from $p$ to $y_j$. Considering $C_j$ as a loop based at $y_j$, the composition $\rho_jC_j\rho_j^{-1}$ is a loop based at $p$. Note that the homotopy class of $\rho_jC_j\rho_j^{-1}$ is equal to the class
	$$\left([\gamma_1,\gamma_2]\cdots[\gamma_{2j-1},\gamma_{2j}]\right)^{-1}\in \pi_1(\Sigma_g,p).$$
	The curve $C_j$ is traversed so that
	for $i>2j$, we have 
	\begin{align*}
	\omega_j(\gamma_i)&=\left([\gamma_1,\gamma_2]\cdots[\gamma_{2j-1},\gamma_{2j}]\right)\gamma_i \left([\gamma_1,\gamma_2]\cdots[\gamma_{2j-1},\gamma_{2j}]\right)^{-1}\\
	&=\left[\prod_{k=1}^{j}[\gamma_{2k-1},\gamma_{2k}],\gamma_i\right]\gamma_i,
	\end{align*}
	which shows that 
	$\omega_j(\gamma_i)\gamma_i^{-1}\in L_3\Pi.$
	Reducing mod $L_4\Pi$, we obtain the class
	$$\left[\sum_{k=1}^j[a_k,b_k], \tilde{\gamma}_i\right]\in \Gr^3_L\Pi,$$
	where $\tilde{\gamma}_i$ is the homology class of $\gamma_i$. 
	Since $\sum_{k=1}^j[a_k,b_{k}]=\theta'_j$ and $\theta'_j+\theta''_j=\theta$, we can express
	$$\left[\sum_{k=1}^j[a_k,b_k], \tilde{\gamma}_i\right]=[\theta'_j,\tilde{\gamma}_i]=[\tilde{\gamma}_i,\theta''_j]\text{ in }\p(3).$$
	The $\tilde{\gamma}_i$ with $i>2j$ form a basis for $H_I$, and thus we have
	$$\tau^\hyp(\omega_j)(x)=[x, \theta''_j]\,\,\,\,\,\text{ if }x\in H_I$$. It is clear that we have 
	$$\tau^\hyp(\omega_j)(x)=0\,\,\,\,\,\text{ if }x\in H^c_I.$$
\end{proof}
\subsection{The derivation Lie algebra $\Der \p $}
Here, we will quickly review the derivation Lie algebra $\Der\p$ and the $\Sp(H)$-representations appearing in low degree terms. 
The following result will be used in our computations. It is computed using the complex of chains $\Lambda^\bullet\Gr^L_\bullet\p$ \cite[Cor.~8.3]{hain0}.
\begin{proposition}[\cite{hain0}]\label{presentation}
	For all $g\geq 3$, the irreducible decomposition of $\p(m)$ in the category of $\Sp(H)$-modules when $1\leq m\leq 4$ is given by
	\begin{align*}
	\p(1)&=V_{[1]};\\
	\p(2)&=V_{[1^2]};\\
     \p(3)&=V_{[2+1]};\\
	\p(4)&=V_{[2+1^2]}+V_{[2]}+V_{[3+1]}.\\
	\end{align*}
	Furthermore, for all $g\geq 3$, $\p(5)$ contains a copy of $V_{[3+1^2]}$ in its decomposition. 
\end{proposition}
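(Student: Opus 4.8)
The plan is to compute the graded Lie algebra $\mathfrak g:=\Gr^L_\bullet\p\cong\mathbb{L}(H)/\langle\theta\rangle$ one weight at a time by means of its Chevalley--Eilenberg chain complex, graded by the lower--central--series weight. Write $\mathfrak g(m)=\Gr^L_m\p$ and consider the complex $(\Lambda^\bullet\mathfrak g,\partial)$, with $\partial$ the standard Chevalley--Eilenberg differential assembled from the bracket; since the bracket preserves weight, this complex is the direct sum over $w\geq 0$ of finite--dimensional subcomplexes $(\Lambda^\bullet\mathfrak g)_w$, each an honest $\Sp(H)$--module. The first ingredient is the homology $H_\bullet(\mathfrak g)$ itself. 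Because $\Sigma_g$ is formal --- the minimal presentation $\Gr^L_\bullet\p\cong\mathbb{L}(H)/\langle\theta\rangle$ is exactly the formality statement of \cite{DGMS} --- and is a $2$--dimensional $K(\pi,1)$, the homology computed by this complex agrees, as a weighted $\Sp(H)$--module, with $H_\bullet(\Sigma_g;\Q)$. Thus $H_0(\mathfrak g)=\Q$ in weight $0$, $H_1(\mathfrak g)=H$ in weight $1$, $H_2(\mathfrak g)=\Q$ in weight $2$ (spanned by the class of $\theta$), and $H_i(\mathfrak g)=0$ for $i\geq 3$.

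The engine is the $\Sp(H)$--equivariant Euler characteristic in the representation ring $R(\Sp(H))$. For fixed $w$, the slice $(\Lambda^\bullet\mathfrak g)_w$ has in homological degree $p$ the summand $\bigoplus\Lambda^{p_1}\mathfrak g(1)\otimes\Lambda^{p_2}\mathfrak g(2)\otimes\cdots$ running over the ways to write $w=\sum_m m\,p_m$ with $\sum_m p_m=p$. Equating $\sum_p(-1)^p[(\Lambda^\bullet\mathfrak g)_w]_p$ to the Euler characteristic of $H_\bullet(\mathfrak g)$ in weight $w$ gives a recursion whose top term is the single copy $\Lambda^1\mathfrak g(w)=\mathfrak g(w)$; since the homology vanishes for $w\geq 3$, this reads $\mathfrak g(w)=\sum_{p\geq 2}(-1)^p\,[(\Lambda^\bullet\mathfrak g)_w]_p$ there, expressing $\mathfrak g(w)$ through the already--known pieces $\mathfrak g(i)$ with $i<w$ together with exterior and tensor powers of $H$. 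Concretely, weight $2$ (where $H_2=\Q$) gives $\mathfrak g(2)=\Lambda^2H-\Q=V_{[1^2]}$; weight $3$ gives $\mathfrak g(3)=H\otimes\mathfrak g(2)-\Lambda^3H=(V_{[2+1]}\oplus V_{[1^3]}\oplus V_{[1]})-(V_{[1^3]}\oplus V_{[1]})=V_{[2+1]}$; and weight $4$ gives $\mathfrak g(4)=H\otimes\mathfrak g(3)+\Lambda^2\mathfrak g(2)-(\Lambda^2H)\otimes\mathfrak g(2)+\Lambda^4H$, which collapses after cancellation to $V_{[2+1^2]}\oplus V_{[2]}\oplus V_{[3+1]}$.

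To expand these virtual sums into irreducibles I would use the basic decompositions $\Lambda^2H=V_{[1^2]}\oplus\Q$ and $\Lambda^3H=V_{[1^3]}\oplus V_{[1]}$, the symplectic Pieri rule for $V_{[1]}\otimes V_\mu$ (a sum over those $\mu$ with one box added or removed), and the Littlewood branching rule for the remaining $\Sp(H)$--restrictions. For the final assertion I would run the identical computation in weight $5$, where $\mathfrak g(5)=\sum_{p\geq 2}(-1)^p[(\Lambda^\bullet\mathfrak g)_5]_p$ with all lower $\mathfrak g(i)$ now known, and extract only the multiplicity of $V_{[3+1^2]}$ --- a three--row partition --- rather than the full decomposition, for instance by isolating a highest--weight vector of type $[3+1^2]$ or by a character pairing on that isotypic piece, checking that it is positive.

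The main obstacle is the representation--theoretic bookkeeping, and it is sharpest in weight $5$. That slice involves many exterior and tensor terms, including the high exterior powers $\Lambda^4H$ and $\Lambda^5H$; for $g=3$ these lie outside the stable range, so the naive Littlewood restriction must be corrected by the modification rules for $\Sp(2g)$. The delicate point is therefore to carry out these (possibly modified) branchings accurately enough that the large virtual combination collapses to an effective representation and that the coefficient of the three--row irreducible $V_{[3+1^2]}$ does not cancel; the analogous care in weight $4$ is what guarantees that exactly $V_{[2+1^2]}\oplus V_{[2]}\oplus V_{[3+1]}$ survives, with no spurious summand.
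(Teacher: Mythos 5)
Your proposal is essentially the paper's own argument: the result is quoted from Hain's \emph{Infinitesimal presentations of Torelli groups} and is obtained there exactly as you describe, via the weight-graded Chevalley--Eilenberg complex $\Lambda^\bullet\Gr^L_\bullet\p$ of $\mathbb{L}(H)/\langle\theta\rangle$, whose homology is concentrated in degrees $0,1,2$ and weights $0,1,2$, so that the Euler-characteristic recursion determines each $\p(m)$ from lower weights (the paper notes the explicit plethysms were carried out with the program LiE). Your identities for $\p(2),\p(3),\p(4)$ check out, and your caveat about modification rules for the unstable branchings in weight $5$ at $g=3$ is the right point to be careful about.
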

\begin{remark}
	The above computation was done by the compute program  LiE developed at University of Amsterdam. 
\end{remark}
Denote the derivation Lie algebra of $\p$ by $\Der\p$. It is a graded Lie algebra:
$$
\Der\p=\bigoplus_n\Der_n\p,
$$
where $\Der_n\p$ is an $\Sp(H)$-submodule of $\Hom(\p(1),\p(n+1))$. More precisely, there is an $\Sp(H)$-invariant surjective homomorphism
$$p_n: \Hom(\p(1),\p(n+1))\cong\p(1)^\ast\otimes\p(n+1)\to \p(n+2)$$ 
that takes $\phi$ to the image of $\phi(\theta)$ in $\p(n+2)$. The homomorphism $\phi$ induces a derivation of $\p$ if and only if it maps to zero in $\p(n+2)$, i.e., $\Der_n\p=\ker p_n$. 
Together with the decomposition for each $\p(m)$, the following result can be easily computed.
\begin{corollary}[\cite{hain0}]
	For all $g\geq 3$, we have
	\begin{align*}
	\Der_1\p&=V_{[1^3]}+V_{[1]};\\
	\Der_2\p&=V_{[2^2]}+V_{[1^2]};\\
	\Der_3\p&=V_{[3+1^2]}+V_{[2+1]}+V_{[3]}.\\
	\end{align*}
\end{corollary}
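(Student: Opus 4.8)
The plan is to exploit the identification $\Der_n\p=\ker p_n$ obtained just above, where
$$p_n\colon \p(1)^\ast\otimes\p(n+1)\to\p(n+2),\qquad \phi\mapsto\phi(\theta),$$
is the $\Sp(H)$-equivariant surjection recalled before the statement. Since $k$ has characteristic zero and $\Sp(H)$ is reductive, every finite-dimensional $\Sp(H)$-module is semisimple, so the surjectivity of $p_n$ yields an equivariant splitting and hence
$$\Der_n\p\cong\bigl(\p(1)^\ast\otimes\p(n+1)\bigr)\ominus\p(n+2)$$
as $\Sp(H)$-representations; concretely, one deletes from the decomposition of the source one copy of each constituent of $\p(n+2)$. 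Because the symplectic form identifies $\p(1)^\ast=H^\ast\cong H=V_{[1]}$, the whole problem reduces to decomposing $V_{[1]}\otimes\p(n+1)$ and subtracting $\p(n+2)$, using the decompositions recorded in Proposition \ref{presentation}.

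The only representation-theoretic tool I need is the symplectic Pieri rule: for a partition $\lambda$ with at most $g$ parts, $V_{[1]}\otimes V_\lambda=\sum_\mu V_\mu$, where $\mu$ ranges with multiplicity one over the partitions (of at most $g$ parts) obtained from $\lambda$ by adding a single box or by removing a single box; the ``remove a box'' terms are the symplectic contractions and are exactly the feature absent from the $\GL$ Pieri rule. In degree $1$ this gives $V_{[1]}\otimes\p(2)=V_{[1]}\otimes V_{[1^2]}=V_{[2+1]}+V_{[1^3]}+V_{[1]}$, and deleting $\p(3)=V_{[2+1]}$ leaves $\Der_1\p=V_{[1^3]}+V_{[1]}$. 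In degree $2$, $V_{[1]}\otimes\p(3)=V_{[1]}\otimes V_{[2+1]}=V_{[3+1]}+V_{[2^2]}+V_{[2+1^2]}+V_{[2]}+V_{[1^2]}$, and deleting $\p(4)=V_{[2+1^2]}+V_{[2]}+V_{[3+1]}$ leaves $\Der_2\p=V_{[2^2]}+V_{[1^2]}$. Both of these use only $\p(1),\ldots,\p(4)$, all supplied by Proposition \ref{presentation}.

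The degree-$3$ case is the delicate one, and it is where the main obstacle lies: the formula $\Der_3\p\cong(V_{[1]}\otimes\p(4))\ominus\p(5)$ requires the \emph{entire} decomposition of $\p(5)$, whereas Proposition \ref{presentation} records only the single summand $V_{[3+1^2]}$ that the later sections use. I would therefore import the full decomposition of $\p(5)$ from \cite{hain0} (where it is produced by LiE), expand $V_{[1]}\otimes\p(4)$ by the Pieri rule, and cancel $\p(5)$ constituent by constituent to arrive at $\Der_3\p=V_{[3+1^2]}+V_{[2+1]}+V_{[3]}$. A secondary point requiring care is stability in the row length: for $g\geq 4$ both $V_{[1]}\otimes\p(4)$ and $\p(5)$ contain the four-row constituent $V_{[2+1^3]}$, which cancels, while for $g=3$ the Pieri rule forbids a fourth row and this term is simply absent on both sides. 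In either case the surviving answer involves only partitions of at most three rows, so the stated value of $\Der_n\p$ holds uniformly for all $g\geq 3$; checking this cancellation, rather than the individual tensor expansions, is the real content of the degree-$3$ argument.
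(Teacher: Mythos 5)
Your proposal is correct and follows essentially the same route as the paper, which simply asserts that the corollary ``can be easily computed'' from $\Der_n\p=\ker p_n$ together with the decompositions of the $\p(m)$; you have merely made explicit the semisimple splitting $\Der_n\p\cong (H\otimes\p(n+1))\ominus\p(n+2)$ and the symplectic Pieri-rule computations. Your observation that the degree-$3$ case requires the full decomposition of $\p(5)$, of which Proposition \ref{presentation} records only one constituent, is accurate, and importing that decomposition from \cite{hain0} is exactly what the paper implicitly does.
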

It was shown in \cite{ak} that the graded Lie algebra $\Gr^L_\bullet\p$ has the trivial center, and hence we may regard $\Gr^L_\bullet\p$ as a graded Lie ideal of $\Der\p$ via the adjoint action. Denote the quotient $\Der\p/\im(\Gr^L_\bullet\p)$ by $\Out\Der\p$.
\begin{corollary}[\cite{hain0}]
	For all $g\geq 3$, we have
	\begin{align*}
	\Out\Der_1\p&=V_{[1^3]};\\
	\Out\Der_2\p&=V_{[2^2]};\\
	\Out\Der_3\p&=V_{[3+1^2]}+V_{[3]}.
	\end{align*}
\end{corollary}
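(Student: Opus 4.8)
The plan is to obtain the three outer derivation modules directly as quotients of the modules $\Der_1\p$, $\Der_2\p$, $\Der_3\p$ computed in the preceding corollary, by dividing out the inner derivations. By definition $\Out\Der_n\p=\Der_n\p/\bigl(\im(\Gr^L_\bullet\p)\cap\Der_n\p\bigr)$, so the entire content is to identify, in each of degrees $1,2,3$, the image of the adjoint map inside $\Der_n\p$ as an $\Sp(H)$-submodule and then take the semisimple quotient.

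First I would pin down the degree in which $\ad_x$ lives for $x\in\p(m)$. A derivation in $\Der_n\p$ raises degree by $n$, and the copy of $\Gr^L_\bullet\p$ inside $\Der\p$ is the one given by $x\mapsto\ad_x$; the element $\ad_x$ with $x\in\p(m)$ sends $\p(1)$ into $[\p(m),\p(1)]\subseteq\p(m+1)$ and hence lies in $\Der_m\p$. Consequently $\im(\Gr^L_\bullet\p)\cap\Der_n\p=\ad(\p(n))$, and the three relevant source modules are $\p(1)=V_{[1]}$, $\p(2)=V_{[1^2]}$, and $\p(3)=V_{[2+1]}$ from Proposition~\ref{presentation}.

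Next I would check that $\ad\colon\p(n)\to\Der_n\p$ is injective. Because $\Gr^L_\bullet\p$ is generated in degree one by $\p(1)=H$, a derivation is determined by its restriction to $\p(1)$; so if $\ad_x$ vanishes on $\p(1)$, an easy induction using the Jacobi identity shows $\ad_x=0$ on all of $\p$, i.e.\ $x$ is central. Since $\Gr^L_\bullet\p$ has trivial center by \cite{ak}, this forces $x=0$, giving injectivity. Therefore $\im(\Gr^L_\bullet\p)\cap\Der_n\p$ is isomorphic, as an $\Sp(H)$-module, to $\p(n)$.

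Finally I would conclude by a multiplicity-one argument. In each of the decompositions
$$\Der_1\p=V_{[1^3]}+V_{[1]},\quad \Der_2\p=V_{[2^2]}+V_{[1^2]},\quad \Der_3\p=V_{[3+1^2]}+V_{[2+1]}+V_{[3]},$$
the irreducible isomorphic to $\p(n)$ (namely $V_{[1]}$, $V_{[1^2]}$, and $V_{[2+1]}$ respectively) occurs with multiplicity one. By Schur's lemma the injective image $\ad(\p(n))$ must coincide with that isotypic summand, and since $\Sp(H)$ is reductive in characteristic zero its finite-dimensional representations are semisimple, so the quotient $\Out\Der_n\p=\Der_n\p/\ad(\p(n))$ is obtained simply by deleting that summand. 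This yields $\Out\Der_1\p=V_{[1^3]}$, $\Out\Der_2\p=V_{[2^2]}$, and $\Out\Der_3\p=V_{[3+1^2]}+V_{[3]}$, as claimed. There is no serious obstacle beyond the bookkeeping; the only point requiring care is the injectivity of $\ad$ in each degree, which rests entirely on the triviality of the center of $\Gr^L_\bullet\p$.
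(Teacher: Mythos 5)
Your argument is correct and is exactly the computation the paper intends: the paper states this corollary without proof, having just set up the definition $\Out\Der\p=\Der\p/\im(\Gr^L_\bullet\p)$ and noted that the trivial center of $\Gr^L_\bullet\p$ (from \cite{ak}) makes the adjoint map injective, so the result follows by removing the copy of $\p(n)$ from the decomposition of $\Der_n\p$. Your added care in locating $\ad(\p(n))$ via Schur's lemma and the multiplicity-one occurrence of $V_{[1]}$, $V_{[1^2]}$, $V_{[2+1]}$ in the respective $\Der_n\p$ is precisely the bookkeeping the paper leaves implicit.
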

The following lemma shows that $\tau^\hyp(\omega_j)$ induces a derivation of $\p$.
\begin{lemma} \label{image}
	For $I\subset \{1,\ldots,g\}$, $\phi(\theta_I^2)$ lies in $\Der_2\p$, 
	where  $\theta_I=\sum_{i\in I}a_i\wedge b_i$. 
\end{lemma}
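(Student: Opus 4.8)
The plan is to verify directly that $\phi(\theta_I^2)$ lies in $\ker p_2$, since by definition $\Der_2\p=\ker p_2$, where $p_2\colon\Hom(\p(1),\p(3))\cong \p(1)^\ast\otimes\p(3)\to\p(4)$ is the contraction map described above. Concretely, for $\psi\colon H\to\p(3)$ the value $p_2(\psi)$ is the class in $\p(4)$ of the derivation extending $\psi$ applied to the relation $\theta=\sum_{i=1}^g[a_i,b_i]$, namely
\[
p_2(\psi)=\sum_{i=1}^g\bigl([\psi(a_i),b_i]+[a_i,\psi(b_i)]\bigr).
\]
By Lemma \ref{image of theta square}, the map $\psi:=\phi(\theta_I^2)$ is given explicitly by $\psi(x)=2[x,\theta_I]$ for $x\in H_I$ and $\psi(x)=0$ for $x\in H^c_I$, with $\theta_I=\sum_{i\in I}[a_i,b_i]$. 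So the first step is simply to substitute this description into the formula for $p_2$.

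Since $\psi$ kills $H^c_I$, the indices $i\notin I$ drop out, and one is left with
\[
p_2(\psi)=2\sum_{i\in I}\Bigl([[a_i,\theta_I],b_i]+[a_i,[b_i,\theta_I]]\Bigr).
\]
The main (and essentially the only) step is a single application of the Jacobi identity to each summand. Using that $\ad_{a_i}$ is a derivation, $[a_i,[b_i,\theta_I]]=[[a_i,b_i],\theta_I]+[b_i,[a_i,\theta_I]]$, and the term $[b_i,[a_i,\theta_I]]$ cancels against $[[a_i,\theta_I],b_i]=-[b_i,[a_i,\theta_I]]$. Hence each summand collapses to $[[a_i,b_i],\theta_I]$, and summing over $i\in I$ together with $\sum_{i\in I}[a_i,b_i]=\theta_I$ gives $p_2(\psi)=2[\theta_I,\theta_I]$, which is $0$ by antisymmetry of the bracket. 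Thus $\phi(\theta_I^2)\in\ker p_2=\Der_2\p$.

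I do not anticipate a genuine obstacle: the whole argument is one Jacobi manipulation, and the pleasant point is that it assembles the sum into the bracket of $\theta_I$ with itself. The only place that needs care is fixing the precise meaning of $p_2$, i.e. recording that $p_2(\psi)=\sum_i([\psi(a_i),b_i]+[a_i,\psi(b_i)])$ is exactly the derivation extending $\psi$ evaluated on $\theta$. Once this is pinned down, the identity even holds at the level of the free Lie algebra $\mathbb{L}(H)$, so no relations in $\p(4)=\mathbb{L}(H)_4/\langle\theta\rangle$ are invoked and the conclusion follows for every $I\subset\{1,\dots,g\}$ uniformly.
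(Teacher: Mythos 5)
Your proof is correct and takes essentially the same route as the paper: the paper's proof consists of the single remark that it suffices to check that $\phi(\theta_I^2)$, viewed as a derivation of $\mathbb{L}(H)$, sends the polarization $\theta$ to zero, this being ``an easy computation'' from Lemma \ref{image of theta square}. Your Jacobi-identity calculation showing $p_2(\phi(\theta_I^2))=2[\theta_I,\theta_I]=0$ is exactly that computation, carried out in full.
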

\begin{proof}It will suffice to check that the element $\phi(\theta^2_I)$ as an element of $\Der\L(H)$ maps the polarization $\theta$ to zero. This can be checked through an easy computation using Lemma \ref{image of theta square}.
\end{proof}
	
\subsection{The outer action of a commuting pair of Dehn twists}\label{The outer action of a commuting pair of Dehn twists}
In this section, we will show that there is a commuting pair of Dehn twists such that the bracket of the outer part of the image of each of the Dehn twist under the hyperelliptic Johnson homomorphism  is a nontrivial inner derivation in $\Der_4\p$.
\begin{proposition}Assume $g\geq3$.
	The map $\phi:\mathrm{Sym}^2\Lambda^2H\to \Hom(H,\p(3))$ defined in \ref{The image of Dehn twist} induces a homomorphism 
	$$\tilde{\phi}: \mathrm{Sym}^2\Lambda^2H/(\Lambda^4H\oplus \Q)\to \Hom(H,\p(3)).$$
\end{proposition}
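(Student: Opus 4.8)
The plan is to use the $\Sp(H)$-equivariance of $\phi$ (noted when it was defined), which makes $\ker\phi$ an $\Sp(H)$-subrepresentation of $\mathrm{Sym}^2\Lambda^2 H$, and then to check that the subrepresentation $\Lambda^4 H\oplus\Q$ is contained in it. It is convenient first to repackage $\phi$: writing $c_x(u\wedge v)=\theta(u,x)v-\theta(v,x)u$ for the contraction $\Lambda^2 H\to H$ against $x$, the defining formula becomes
\[
\phi(\alpha\cdot\beta)(x)=[c_x\alpha,\,\beta]+[c_x\beta,\,\alpha],\qquad \alpha,\beta\in\Lambda^2 H,
\]
where on the right $\alpha,\beta$ are read in $\p(2)=\Lambda^2 H/\langle\theta\rangle$ and the bracket is $\p(1)\otimes\p(2)\to\p(3)$. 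This packaging makes both required vanishings transparent.

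The trivial summand $\Q=\langle\theta^2\rangle$ is disposed of at once: from the reformulation $\phi(\theta\cdot\theta)(x)=2[c_x\theta,\theta]$, and since $\theta\equiv 0$ in $\p(2)$ (recall $\Gr^L_\bullet\p\cong\mathbb{L}(H)/\langle\theta\rangle$) the right-hand bracket vanishes. Hence $\phi(\theta^2)=0$, by the same mechanism used in the proof of Lemma \ref{image}.

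The real content is the summand $\Lambda^4 H$, embedded by the natural inclusion $\Lambda^4 H\hookrightarrow\mathrm{Sym}^2\Lambda^2 H$,
\[
w_1\wedge w_2\wedge w_3\wedge w_4\ \longmapsto\ (w_1\wedge w_2)(w_3\wedge w_4)-(w_1\wedge w_3)(w_2\wedge w_4)+(w_1\wedge w_4)(w_2\wedge w_3).
\]
Here representation theory handles most constituents for free. As $\Sp(H)$-modules $\Lambda^4 H=V_{[1^4]}\oplus V_{[1^2]}\oplus\Q$, while the target decomposes as $\Hom(H,\p(3))\cong V_{[1]}\otimes V_{[2+1]}=V_{[3+1]}\oplus V_{[2^2]}\oplus V_{[2+1^2]}\oplus V_{[2]}\oplus V_{[1^2]}$; since neither $V_{[1^4]}$ nor $\Q$ occurs in the target, Schur's lemma already forces these two constituents into $\ker\phi$. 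The remaining constituent $V_{[1^2]}$ is exactly the obstacle: it does occur in the target, and moreover $\mathrm{Sym}^2\Lambda^2 H$ contains \emph{two} copies of $V_{[1^2]}$, so equivariance alone cannot decide which copy dies. For this I would evaluate $\phi$ on the inclusion image of a decomposable $w_1\wedge\cdots\wedge w_4$ directly; by multilinearity it suffices to take the $w_i$ from a symplectic basis, where only finitely many bracket types occur. Using the reformulation, the resulting twelve brackets reorganize—via the Jacobi identity in $\Gr^L_\bullet\p$ together with the antisymmetry in $w_1,\dots,w_4$—into cancelling pairs, so $\phi$ annihilates the whole image of $\Lambda^4 H$.

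Combining the two vanishings gives $\phi(\Lambda^4 H\oplus\Q)=0$ (the sum is direct, since $\theta^2$ is not in the image of the inclusion), so $\phi$ descends to $\tilde\phi$ on $\mathrm{Sym}^2\Lambda^2 H/(\Lambda^4 H\oplus\Q)$, as claimed; as a consistency check the quotient is $V_{[2^2]}\oplus V_{[1^2]}$, so the highest-weight part $\tambo$ survives. The one step that is not formal is the final Jacobi cancellation for the $V_{[1^2]}$ constituent; everything else is equivariance and Schur's lemma.
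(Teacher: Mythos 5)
Your proposal is correct and follows the same route as the paper, which simply asserts that ``simple computations'' give $\phi(\Lambda^4H)=0$ and $\phi(\theta^2)=0$; you actually supply those computations, and both check out: $\phi(\theta^2)(x)=2[c_x\theta,\theta]$ dies because $\theta\equiv 0$ in $\p(2)$, and the twelve terms from a decomposable element of $\Lambda^4H$ cancel as four Jacobi \emph{triples} (one for each coefficient $\theta(w_i,x)$), not pairs. The Schur's-lemma preamble is harmless but redundant, since the Jacobi computation already kills all of $\Lambda^4H$ at once.
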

\begin{proof}
	First observe that the representation $\Lambda^4H$ sits inside $\mathrm{Sym}^2\Lambda^2H$ via the map
	$$v_1\wedge v_2\wedge v_3\wedge v_4\mapsto (v_1\wedge v_2)(v_3\wedge v_4)+(v_1\wedge v_3)(v_4\wedge v_2)+(v_1\wedge v_4)(v_2\wedge v_3).$$ The corresponding projection is given by
	$$(u_1\wedge u_2)(u_3\wedge u_4)\mapsto u_1\wedge u_2\wedge u_3\wedge u_4.$$ A copy of the trivial representation $\Q$ sits inside $\mathrm{Sym}^2\Lambda^2H$ via the map
	$1\mapsto \theta^2.$ Simple computations show that $\phi(\Lambda^4H)=0$ and $\phi(\theta^2)=0$. Thus $\phi$ induces an $\Sp(H)$-homomorphism
	$$\tilde{\phi}:\mathrm{Sym}^2\Lambda^2H/(\Lambda^4H\oplus \Q)\to \Hom(H,\p(3)).$$
\end{proof}
\begin{remark}
	In fact, the map $\tilde{\phi}$ is an isomorphism onto $\Der_2\p$. 
\end{remark}
In order to separate the $V_{[2^2]}$ component from $\mathrm{Sym}^2\Lambda^2H$, we need a projection onto a copy of $V_{[1^2]}$ that is not contained in the submodule $\Lambda^4H\subset \mathrm{Sym}^2\Lambda^2H$. Note that a copy of $\Lambda^2H$ sits inside $\Lambda^4H$ via the map
$ u\wedge v\mapsto u\wedge v\wedge\theta.$
The other copy of $\Lambda^2H$ sits inside $\mathrm{Sym}^2\Lambda^2H$ via the map
$ u\wedge v\mapsto (u\wedge v)\cdot\theta.$
Define a  map $\pi:\mathrm{Sym}^2\Lambda^2H\to \Lambda^2 H$ by
$$(u_1\wedge v_1)(u_2\wedge v_2)\mapsto \theta(u_1,v_1)v_2\wedge u_2+\theta(v_2,u_2)u_1\wedge v_1 \hspace{2in}$$
$$\hspace{1in}+\frac{1}{2}\{\theta(u_1,v_2)v_1\wedge u_2+\theta(v_1,u_2)u_1\wedge v_2 +\theta(u_1,u_2)v_2\wedge v_1+\theta(v_2,v_1)u_1\wedge u_2\}.$$
One can easily check that $\pi$ is an $\Sp(H)$-homomorphism and vanishes on $\Lambda^4 H$ as a submodule of $\mathrm{Sym}^2\Lambda^2H$.
\begin{lemma}
	Consider the irreducible $\Sp(H)$-module $V_{[1^2]}$ as the  submodule of $\Lambda^2H$ given by the kernel of the map $u\wedge v\mapsto \theta(u,v)$. Then the composition 
	$$V_{[1^2]}\overset{\cdot\theta}\to \mathrm{Sym}^2\Lambda^2H\overset{\pi}\to \Lambda^2H$$ is given by multiplication by $-g-1$. Also the composition
	$$\Q\theta\overset{\cdot\theta}\to \mathrm{Sym}^2\Lambda^2H\overset{\pi}\to \Lambda^2H$$
	is given by multiplication by $-2g-1$. \qed
\end{lemma}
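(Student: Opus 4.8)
\section*{Proof proposal}

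The plan is to exploit $\Sp(H)$-equivariance to reduce each assertion to a single evaluation. As $\Sp(H)$-modules one has the multiplicity-free decomposition $\Lambda^2H\cong V_{[1^2]}\oplus\Q\theta$ into two non-isomorphic irreducibles (the nontrivial $V_{[1^2]}$ and the trivial line $\Q\theta$). Since both $\cdot\theta$ and $\pi$ are $\Sp(H)$-equivariant, an equivariant map out of the irreducible $V_{[1^2]}$ must land in the $V_{[1^2]}$-isotypic part of $\Lambda^2H$, namely $V_{[1^2]}$ itself; hence the composition restricted to $V_{[1^2]}$ is an equivariant endomorphism of $V_{[1^2]}$, and likewise the restriction to $\Q\theta$ is an equivariant endomorphism of $\Q\theta$. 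By Schur's lemma each is multiplication by a scalar, so it suffices to evaluate the composition on one convenient vector in each summand.

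For the first claim I would take a decomposable vector $u\wedge v\in V_{[1^2]}$, i.e. with $\theta(u,v)=0$ (for instance $a_1\wedge a_2$), and expand $(u\wedge v)\cdot\theta=\sum_{i=1}^g(u\wedge v)(a_i\wedge b_i)$ using $\theta=\sum_i a_i\wedge b_i$. Applying the defining formula for $\pi$ term by term, the first summand $\theta(u,v)\,b_i\wedge a_i$ vanishes, the second contributes $\theta(b_i,a_i)\,u\wedge v=-u\wedge v$ for each $i$, and the remaining four terms I would collapse using the symplectic expansion $w=\sum_i\big(\theta(w,b_i)a_i-\theta(w,a_i)b_i\big)$. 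Summing the diagonal contributions over $i=1,\dots,g$ yields $-g\,(u\wedge v)$, while the four mixed terms telescope under this projection identity to $-u\wedge v$; together this gives $-(g+1)(u\wedge v)$, establishing the scalar $-g-1$.

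For the second claim I would compute $\pi(\theta\cdot\theta)=\pi(\theta^2)$ directly, writing $\theta^2=\sum_{i,j}(a_i\wedge b_i)(a_j\wedge b_j)$ and applying $\pi$ to each term with $u_1=a_i,\ v_1=b_i,\ u_2=a_j,\ v_2=b_j$. Using $\theta(a_i,b_j)=\delta_{ij}$ and $\theta(a_i,a_j)=\theta(b_i,b_j)=0$, the first two summands of $\pi$ each depend on only one index and so produce $g$ copies upon summation, contributing $-g\theta$ apiece, while the $\frac{1}{2}\{\cdots\}$ part survives only on the diagonal $i=j$ and contributes $-\theta$. Adding these gives $\pi(\theta^2)=-(2g+1)\theta$, the asserted scalar.

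The only real obstacle is bookkeeping: one must track the eight signed terms of $\pi$ correctly and verify that the off-diagonal (mixed) contributions collapse via the symplectic projection identity. There is no conceptual difficulty, since Schur's lemma forces the existence of the two scalars and the entire content is the organized evaluation on $a_1\wedge a_2$ and on $\theta^2$.
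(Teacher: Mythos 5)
Your proposal is correct: the paper omits the proof entirely (the \qed is in the statement), and Schur's lemma plus direct evaluation of $\pi\bigl((a_1\wedge a_2)\cdot\theta\bigr)$ and $\pi(\theta^2)$ is exactly the intended computation. I checked both evaluations against the defining formula for $\pi$ and they yield $-(g+1)(a_1\wedge a_2)$ and $-(2g+1)\theta$ as you claim.
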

Define $\Sp(H)$-invariant projections $p_1:\Lambda^2H\to\Q\theta$ by $u\wedge v\mapsto \frac{\theta(u,v)}{g}\theta$ and $p_2:\Lambda^2H\to V_{[1^2]}$ by $u\wedge v\mapsto u\wedge v-\frac{\theta(u,v)}{g}\theta$. Define $p:\Lambda^2 H\to\mathrm{Sym}^2\Lambda^2H$ by 
\[u\wedge v\mapsto\left( \frac{1}{-2g-1}(\cdot\theta\circ p_1)+\frac{1}{-g-1}(\cdot\theta\circ p_2)\right)(u\wedge v).\]
\begin{corollary}
	We have $\pi\circ p=\id$ on $\Lambda^2H$. \qed
\end{corollary}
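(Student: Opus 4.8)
The plan is to exploit the $\Sp(H)$-isotypic decomposition $\Lambda^2H=\Q\theta\oplus V_{[1^2]}$ and to verify the identity one summand at a time. The guiding observation is that the two maps $p_1$ and $p_2$ introduced just above are the complementary idempotents for this decomposition: by definition $p_1(u\wedge v)+p_2(u\wedge v)=u\wedge v$, so $p_1+p_2=\id_{\Lambda^2H}$, and using the normalization $\theta(\theta)=\sum_{i=1}^g\theta(a_i,b_i)=g$ one checks $p_1(\theta)=\theta$, $p_2(\theta)=0$, while $p_1$ vanishes and $p_2$ is the identity on $V_{[1^2]}$ (since elements of $V_{[1^2]}$ are annihilated by the contraction $u\wedge v\mapsto\theta(u,v)$). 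Thus $p_1$ and $p_2$ are indeed the projections onto $\Q\theta$ and $V_{[1^2]}$, and it suffices to evaluate $\pi\circ p$ on $\theta$ and on an arbitrary $w\in V_{[1^2]}$.

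Next I would simply unwind the definition of $p$. By construction
$$\pi\circ p=\frac{1}{-2g-1}\bigl(\pi\circ(\cdot\theta)\circ p_1\bigr)+\frac{1}{-g-1}\bigl(\pi\circ(\cdot\theta)\circ p_2\bigr),$$
so the entire computation collapses onto the two scalar values supplied by the preceding Lemma, namely that $\pi\circ(\cdot\theta)$ acts as multiplication by $-2g-1$ on $\Q\theta$ and by $-g-1$ on $V_{[1^2]}$. On the summand $\Q\theta$ the $p_2$-term vanishes, $p_1$ acts as the identity, and the Lemma gives $\pi\circ(\cdot\theta)(\theta)=(-2g-1)\theta$, whence $\pi\circ p(\theta)=\frac{1}{-2g-1}(-2g-1)\theta=\theta$. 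Symmetrically, on $V_{[1^2]}$ the $p_1$-term vanishes, $p_2$ acts as the identity, and the Lemma gives $\pi\circ(\cdot\theta)(w)=(-g-1)w$, whence $\pi\circ p(w)=\frac{1}{-g-1}(-g-1)w=w$. Recombining the two pieces through $p_1+p_2=\id$ yields $\pi\circ p=\id$ on all of $\Lambda^2H$.

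I do not expect a genuine obstacle here: the map $p$ has been engineered precisely so that its two coefficients $\frac{1}{-2g-1}$ and $\frac{1}{-g-1}$ invert, summand by summand, the scalars computed in the Lemma, so the corollary is a formal consequence of Schur's lemma together with those two eigenvalue computations. The only point deserving a moment of care is confirming that $p_1,p_2$ really are the idempotents splitting $\Lambda^2H=\Q\theta\oplus V_{[1^2]}$; this hinges on the normalization $\theta(\theta)=g$, which is exactly what forces the coefficient $\frac{\theta(u,v)}{g}$ in $p_1$ to fix $\theta$ rather than scale it. Once that normalization is recorded, the verification on each summand is immediate and the proof is complete.
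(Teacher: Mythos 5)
Your proof is correct and is precisely the routine verification the paper intends: the corollary is stated with no proof because it follows immediately from the definition of $p$ and the two eigenvalue computations in the preceding lemma, exactly as you lay out. Your check that $p_1,p_2$ are the complementary idempotents for $\Lambda^2H=\Q\theta\oplus V_{[1^2]}$ (hinging on $p_1(\theta)=\theta$) is the only point of substance, and you handle it correctly.
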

\begin{corollary}\label{decomp} For $g\geq 2$, as an $\Sp(H)$-module, we have a decomposition
$$	\mathrm{Sym}^2\Lambda^2H\cong \ker\pi\oplus\im\pi\cong \Lambda^4H\oplus V_{[2^2]}\oplus\Lambda^2H.$$

\end{corollary}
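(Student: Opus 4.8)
The plan is to leverage the explicit section $p$ built just above, together with the $\Sp(H)$-isotypic decomposition of $\mathrm{Sym}^2\Lambda^2H$, so that the corollary becomes a matter of splitting an exact sequence and then counting multiplicities. First I would record that the preceding corollary gives $\pi\circ p=\id$ on $\Lambda^2H$, so $p$ is a right inverse of $\pi$; in particular $\pi$ is surjective and $p$ is injective. Since we are in characteristic zero and $\Sp(H)$ is reductive, every finite-dimensional $\Sp(H)$-module is semisimple, and the section $p$ splits the short exact sequence $0\to\ker\pi\to\mathrm{Sym}^2\Lambda^2H\overset{\pi}\to\im\pi\to0$. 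This yields $\mathrm{Sym}^2\Lambda^2H\cong\ker\pi\oplus\im p$, and because $\pi\circ p=\id$ the restriction $\pi|_{\im p}$ is an isomorphism onto $\im\pi=\Lambda^2H$; hence $\im p\cong\im\pi\cong\Lambda^2H$. This already produces the first isomorphism $\mathrm{Sym}^2\Lambda^2H\cong\ker\pi\oplus\im\pi$, and it remains only to identify $\ker\pi$.

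Next I would compute the full decomposition of $\mathrm{Sym}^2\Lambda^2H$ into $\Sp(H)$-irreducibles. Writing $\Lambda^2H\cong V_{[1^2]}\oplus\Q\theta$ gives
$$\mathrm{Sym}^2\Lambda^2H\cong\mathrm{Sym}^2V_{[1^2]}\oplus V_{[1^2]}\oplus\Q,$$
so the problem reduces to $\mathrm{Sym}^2V_{[1^2]}$. This can be obtained either from the classical $\GL(H)$-plethysm $\mathrm{Sym}^2(\Lambda^2H)\cong\Lambda^4H\oplus S_{[2,2]}H$ followed by Littlewood's restriction rule to $\Sp(H)$, or directly by the program LiE as elsewhere in the paper; the branching gives $S_{[2,2]}H|_{\Sp}\cong V_{[2^2]}\oplus V_{[1^2]}\oplus\Q$ and $\Lambda^4H|_{\Sp}\cong V_{[1^4]}\oplus V_{[1^2]}\oplus\Q$. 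The upshot, uniform in $g\geq2$, is
$$\mathrm{Sym}^2\Lambda^2H\cong V_{[2^2]}\oplus\Lambda^4H\oplus\Lambda^2H,$$
where $\Lambda^4H$ and $\Lambda^2H$ are kept as (possibly reducible) blocks; I would note that $V_{[1^4]}$ degenerates to $0$ for $g\leq3$, which is exactly why it is cleanest to phrase the statement with $\Lambda^4H$ rather than its irreducible constituents.

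Finally I would match the two decompositions by multiplicity counting. We already know from the definition of $\pi$ that $\Lambda^4H\subseteq\ker\pi$. In $\mathrm{Sym}^2\Lambda^2H$ the constituents $V_{[2^2]}$ and $V_{[1^4]}$ each occur with multiplicity one, while $V_{[1^2]}$ and $\Q$ occur with multiplicity two. Since $\im p\cong\Lambda^2H\cong V_{[1^2]}\oplus\Q$ contains no copy of $V_{[2^2]}$ or $V_{[1^4]}$, both of these irreducibles must sit in the complementary summand $\ker\pi$; and as $\im p$ absorbs exactly one copy each of $V_{[1^2]}$ and $\Q$, the remaining copies, which are the ones already accounted for inside $\Lambda^4H\subseteq\ker\pi$, stay in $\ker\pi$. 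By semisimplicity this forces $\ker\pi\cong\Lambda^4H\oplus V_{[2^2]}$, giving the desired $\mathrm{Sym}^2\Lambda^2H\cong\ker\pi\oplus\im\pi\cong\Lambda^4H\oplus V_{[2^2]}\oplus\Lambda^2H$. The only genuinely nonformal input is the representation-theoretic decomposition of $\mathrm{Sym}^2\Lambda^2H$ and the care needed for its uniformity across small $g$; once that is in hand, the splitting and the identification of $\ker\pi$ are immediate consequences of having the explicit section $p$.
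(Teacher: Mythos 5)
Your proposal is correct and takes essentially the same route as the paper, which simply notes that $\Lambda^4H\subseteq\ker\pi$ and asserts that $\ker\pi/\Lambda^4H\cong V_{[2^2]}$ ``follows from basic representation theory''; you are filling in exactly that representation theory (the splitting via $p$, the plethysm $\mathrm{Sym}^2\Lambda^2H\cong\Lambda^4H\oplus S_{[2,2]}H$, and the branching to $\Sp(H)$) together with the multiplicity count. The only small inaccuracy is that for $g=2$ the generic branching multiplicities you quote fail (e.g.\ $V_{[1^2]}$ occurs only once in $\mathrm{Sym}^2\Lambda^2H$ and $\Lambda^4H\cong\Q$), but since you phrase the final identification in terms of the blocks $\Lambda^4H$ and $\Lambda^2H$, the conclusion $\ker\pi\cong\Lambda^4H\oplus V_{[2^2]}$ is unaffected.
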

\begin{proof}
Since a copy of $\Lambda^4H$ is contained in $\ker \pi$, it will suffice to see that $\ker\pi/\Lambda^4H$ is isomorphic to $V_{[2^2]}$.  This follows from a basic representation theory of $\Sp(H)$.
\end{proof}
For each vector $(u_1\wedge v_1)(u_2\wedge v_2)\in \mathrm{Sym}^2\Lambda^2H$, we can express uniquely 
$$(u_1\wedge v_1)(u_2\wedge v_2)= \delta_1+\delta_2+\delta_3,$$
where $\delta_1$, $\delta_2$, and $\delta_3$ are vectors of $\Lambda^4H$, $V_{[2^2]}$, and $\Lambda^2H$, respectively. 
\begin{corollary}\label{projection formula 1}
	We have 
	$$\delta_1+\delta_2=(u_1\wedge v_1)(u_2\wedge v_2)-\{\theta(u_1,v_1)p(v_2\wedge u_2)+\theta(v_2,u_2)p(u_1\wedge v_1)\}$$
	$$-\frac{1}{2}\{\theta(u_1,v_2)p(v_1\wedge u_2)+\theta(v_1,u_2)p(u_1\wedge v_2)+\theta(u_1,u_2)p(v_2\wedge v_1)+\theta(v_2,v_1)p(u_1,u_2)\}.$$ \qed
\end{corollary}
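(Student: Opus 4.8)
The plan is to read the claimed formula as the explicit description of the projection of $w:=(u_1\wedge v_1)(u_2\wedge v_2)$ onto the summand $\ker\pi=\Lambda^4 H\oplus V_{[2^2]}$ of the decomposition in Corollary \ref{decomp}, realized through the section $p$. The whole statement is really a splitting-of-idempotents bookkeeping, so I would first isolate that structural point and only afterwards substitute the explicit formula for $\pi$.

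First I would note that the identity $\pi\circ p=\id$ of the preceding corollary, together with the injectivity of $p$, makes $e:=p\circ\pi$ an idempotent endomorphism of $\mathrm{Sym}^2\Lambda^2 H$, since $e^2=p\circ(\pi\circ p)\circ\pi=p\circ\pi=e$. Its image is $\im p$, which is exactly the third summand $\Lambda^2 H$ carrying the component $\delta_3$, and its kernel is $\ker\pi=\Lambda^4 H\oplus V_{[2^2]}$, the summand carrying $\delta_1+\delta_2$. Hence $\id-e$ is the complementary idempotent onto $\ker\pi$, so that $\delta_1+\delta_2=(\id-e)(w)=w-p(\pi(w))$. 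In other words $\delta_3=p(\pi(w))$, and everything reduces to evaluating $p(\pi(w))$.

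The remaining step is the substitution. Plugging $w$ into the defining formula for $\pi$ expresses $\pi(w)$ as a $\Q$-linear combination of the six wedges $v_2\wedge u_2$, $u_1\wedge v_1$ (with coefficients $\theta(u_1,v_1)$, $\theta(v_2,u_2)$) and $v_1\wedge u_2$, $u_1\wedge v_2$, $v_2\wedge v_1$, $u_1\wedge u_2$ (each carrying a factor $\tfrac12$ and the corresponding $\theta$-coefficient). Applying the $\Q$-linear map $p$ termwise then reproduces verbatim the right-hand side of the asserted identity, with the final summand read as $p(u_1\wedge u_2)$.

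I do not expect a genuine obstacle here: the mathematical content is entirely the idempotent argument of the middle paragraph, and the rest is linearity. The only places demanding care are the correct identification of $\ker\pi$ as the two summands $\Lambda^4 H\oplus V_{[2^2]}$ (so that $\delta_1+\delta_2$, and not $\delta_3$, is the $\ker\pi$-part) and the faithful transcription of the signs and of the factor $\tfrac12$ from the definition of $\pi$. It is worth flagging that this corollary only isolates the sum $\delta_1+\delta_2$ and does not separate $\delta_1\in\Lambda^4 H$ from $\delta_2\in V_{[2^2]}$; that finer splitting would require the projection $p_2$ onto $V_{[1^2]}$ constructed earlier, which is not needed for the present statement.
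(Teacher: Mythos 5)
Your argument is correct and is exactly the computation the paper leaves implicit (the corollary is stated with no proof, as an immediate consequence of Corollary \ref{decomp} and $\pi\circ p=\id$): writing $\delta_1+\delta_2$ as the image of $\id - p\circ\pi$ and expanding $\pi$ termwise is precisely what is intended, and your reading of the final term as $p(u_1\wedge u_2)$ correctly repairs the typo in the statement.
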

Now, we consider a specific pair of commuting Dehn twists on $\Sigma_g$. Consider the separating simple closed curves $C_1$ and $C_{g-1}$. Fix a Weierstrass point $p$ in $S''_1\cap S'_{g-1}$.
Recall that the corresponding isotopy classes of the Dehn twists around $C_1$ and $C_{g-1}$ are denoted by $\omega_1$ and $\omega_{g-1}$, respectively. By Proposition \ref{image of a dehn twist}, we have
$$\tau^\hyp(\omega_1)=\frac{1}{2}\phi((a_1\wedge b_1)^2),$$
and
$$\tau^\hyp(\omega_{g-1})=\frac{1}{2}\phi((a_g\wedge b_g)^2).$$
\begin{remark}In order to apply Proposition \ref{image of a dehn twist}, one needs to adjust the result according to the the fixed Weierstrass point on $\Sigma_g$.
\end{remark}
Denote $2\tau^\hyp(\omega_1)$ and $2\tau^\hyp(\omega_{g-1})$ by $\omega$ and $\tilde{\omega}$. By Lemma \ref{image}, $\omega$ and $\tilde{\omega}$ lies in $\Der_2\p$. Since $\Der_2\p=V_{[2^2]}+V_{[1^2]}$, we can express $\omega$ and $\tilde{\omega}$ as
$$\omega=\xi_{[2^2]}+\xi_{[1^2]},$$
and 
$$\tilde{\omega}=\tilde{\xi}_{[2^2]}+\tilde{\xi}_{[1^2]},$$
where $\xi_{[2^2]}$, $\tilde{\xi}_{[2^2]}$ are vectors in $V_{[2^2]}$ and $\xi_{[1^2]}$, $\tilde{\xi}_{[1^2]}$ are vectors in $V_{[1^2]}$. 
\begin{theorem}\label{no zero outer part} With notation as above, if $g\geq 3$, then the vector $[\xi_{[2^2]}, \tilde{\xi}_{[2^2]}]$ is a nontrivial inner derivation in $\Der_4\p$.
\end{theorem}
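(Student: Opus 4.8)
The plan is to prove the statement in two logically independent steps: first that $[\xi_{[2^2]},\tilde{\xi}_{[2^2]}]$ is inner, which follows formally from the fact that $\omega_1$ and $\omega_{g-1}$ commute, and second that it is nonzero, which is the computational heart and requires an explicit calculation in $\p(5)$ via the minimal presentation $\Gr^L_\bullet\p\cong \mathbb{L}(H)/\langle\theta\rangle$.

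For innerness, I would first note that for $g\geq 3$ the separating curves $C_1$ and $C_{g-1}$ can be realized disjointly: cutting $\Sigma_g$ along both leaves a genus $1$ piece, a genus $g-2$ middle piece, and a genus $1$ piece. Hence the Dehn twists $\omega_1,\omega_{g-1}$ commute in $\Delta_{g,(1)}$. Since both lie in $T\Delta_g\subset\Delta_{g,(1)}$, which fixes the Weierstrass point $p$ and therefore acts genuinely (not merely outer-ly) on $\Pi=\pi_1(\Sigma_g,p)$ and on $\p$, and since Torelli elements act unipotently on $\p$ (trivially on $\Gr^L_\bullet\p$), the logarithms $\log\omega_1,\log\omega_{g-1}$ are well-defined positive-degree derivations whose lowest-degree parts are $\tau^\hyp(\omega_1),\tau^\hyp(\omega_{g-1})\in\Der_2\p$. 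Commuting elements have commuting logarithms, so the degree-$4$ component of $[\log\omega_1,\log\omega_{g-1}]$, namely $[\tau^\hyp(\omega_1),\tau^\hyp(\omega_{g-1})]=\tfrac14[\omega,\tilde{\omega}]$, vanishes; equivalently, $[\omega,\tilde{\omega}]$ maps to the image of the trivial commutator $[\omega_1,\omega_{g-1}]=1$ under the graded Johnson map $\mathrm{gr}\,\Delta_{g,(1)}\to\Out\Der\p$, so it is inner in $\Der_4\p$.

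I would then expand
$$[\omega,\tilde{\omega}]=[\xi_{[2^2]},\tilde{\xi}_{[2^2]}]+[\xi_{[2^2]},\tilde{\xi}_{[1^2]}]+[\xi_{[1^2]},\tilde{\xi}_{[2^2]}]+[\xi_{[1^2]},\tilde{\xi}_{[1^2]}].$$
By the computation of $\Der_2\p=V_{[2^2]}+V_{[1^2]}$ with $\Out\Der_2\p=V_{[2^2]}$ (the corollaries of Proposition~\ref{presentation}), the $V_{[1^2]}$ summand is exactly $\im(\Gr^L_\bullet\p)$ in degree $2$, so $\xi_{[1^2]}$ and $\tilde{\xi}_{[1^2]}$ are inner. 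Because $\im(\Gr^L_\bullet\p)$ is a graded ideal of $\Der\p$, the last three brackets are inner; since $[\omega,\tilde{\omega}]$ is inner too, $[\xi_{[2^2]},\tilde{\xi}_{[2^2]}]$ equals a difference of inner derivations and is therefore inner. Thus $[\xi_{[2^2]},\tilde{\xi}_{[2^2]}]=\ad(z)$ for a unique $z\in\p(4)$, and since $\Gr^L_\bullet\p$ is centerless by \cite{ak}, the derivation is nonzero if and only if $z\neq 0$.

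For nontriviality I would make the outer parts explicit: applying the projection formula of Corollary~\ref{projection formula 1} to $(a_1\wedge b_1)^2$ and to $(a_g\wedge b_g)^2$ isolates their $V_{[2^2]}$-components, whose images under the map $\phi$ of Section~\ref{The image of Dehn twist} are $\xi_{[2^2]}$ and $\tilde{\xi}_{[2^2]}$ as concrete elements of $\Hom(H,\p(3))$. It then suffices to evaluate the degree-$4$ derivation $\xi_{[2^2]}\circ\tilde{\xi}_{[2^2]}-\tilde{\xi}_{[2^2]}\circ\xi_{[2^2]}$ on a single well-chosen $x\in H=\p(1)$, say $x=a_g$ (which meets the handle carrying $\tilde{\xi}_{[2^2]}$ but not that of $\xi_{[2^2]}$, simplifying one of the two composites), and to exhibit a nonzero value in $\p(5)$. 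The main obstacle is precisely this nonvanishing check: the naive expansion produces many length-$5$ bracket monomials, and since one works modulo $\langle\theta\rangle$ nonvanishing cannot be read off term by term. The clean way around it is to fix a Hall or Lyndon basis of the degree-$5$ part of $\mathbb{L}(H)/\langle\theta\rangle$ adapted to the two disjoint handles carrying $a_1,b_1$ and $a_g,b_g$, so that the relevant mixed monomial --- one involving both $[a_1,b_1]$ and $[a_g,b_g]$ --- is a basis element whose coefficient is manifestly nonzero; equivalently one projects onto the copy of $V_{[3+1^2]}\subset\p(5)$ provided by Proposition~\ref{presentation}, the summand not annihilated by the relations, and computes a single numerical pairing. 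Either route reduces the claim to one explicit arithmetic evaluation, which for $g\geq 3$ is nonzero and completes the proof.
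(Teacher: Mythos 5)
Your overall strategy coincides with the paper's: innerness is extracted from the vanishing of $[\omega,\tilde{\omega}]$ (forced by the disjointness of $C_1$ and $C_{g-1}$) together with the fact that the three remaining cross terms each involve an inner factor and $\im(\Gr^L_\bullet\p)$ is an ideal of $\Der\p$; nontriviality is to be read off by isolating the $V_{[2^2]}$-components via Corollary \ref{projection formula 1}, evaluating the bracket on a single vector of $H$, and detecting the answer in the copy of $V_{[3+1^2]}\subset\p(5)$. The one substantive shortfall is that you stop at ``one explicit arithmetic evaluation, which for $g\geq 3$ is nonzero'': that evaluation \emph{is} the content of the theorem, and it is not a formality --- the naive expansion produces many length-$5$ monomials that must be reduced modulo $\langle\theta\rangle$ and the Jacobi identity before anything can be declared nonzero. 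The paper carries this out at $x=a_2$ (a vector annihilated by both $\phi((a_1\wedge b_1)^2)$ and $\phi((a_g\wedge b_g)^2)$, so only the $\theta$-cross-terms survive) and lands on $-\tfrac{9}{(g+1)^2}\bigl[a_2,[[a_1,b_1],[a_g,b_g]]\bigr]$, which is then moved to the highest weight vector of $V_{[3+1^2]}$ by the $\s\p$-action; your proposed evaluation point $a_g$ would also work but makes one of the two composites nonzero from the start and hence the bookkeeping heavier. So: same route, correct skeleton, but the decisive computation is asserted rather than performed.
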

\begin{proof}
	The following diagram 
	$$\xymatrix{
		T\Delta_g\ar[r]^{\tau^\hyp_1}\ar[d]_{[~,~]}&\Der_2\p\ar[d]^{[~,~]}\\
		N_2\ar[r]^{\tau^\hyp_2}&\Der_4\p
	}
	$$
	commutes, where the left-hand vertical map is the map taking the commutator of two elements and the right-hand vertical map is the bracket of the derivation Lie algebra $\Der\p$. Since the Dehn twists $\omega_1$ and $\omega_{g-1}$ commute in $T\Delta_g$ being disjoint from each other, it then follows that $[\omega, \tilde{\omega}]=0$. Thus we have
	\begin{align*}
	0&=[\omega,\tilde{\omega}]\\
	&=[\xi_{[2^2]}+\xi_{[1^2]},\tilde{\xi}_{[2^2]}+\tilde{\xi}_{[1^2]}]\\
	&=[\xi_{[2^2]},\tilde{\xi}_{[2^2]}]+[\xi_{[2^2]},\tilde{\xi}_{[1^2]}]+[\xi_{[1^2]},\tilde{\xi}_{[2^2]}]+[\xi_{[1^2]},\tilde{\xi}_{[1^2]}],\\
	\end{align*}
	which shows that the derivation $[\xi_{[2^2]},\tilde{\xi}_{[2^2]}]$ is inner in $\Der\p$, since $\p$ is a Lie ideal in $\Der\p$ via the adjoint action. It remains to show that this derivation is nontrivial. 
	By Corollary \ref{decomp}, we can express $(a_1\wedge b_1)^2$ and $(a_g\wedge b_g)^2$ as
	$$(a_1\wedge b_1)^2=\delta_1+\delta_2+\delta_3\text{ in }\Lambda^4H\oplus V_{[2^2]}\oplus\Lambda^2H$$
	and 
	$$(a_g\wedge b_g)^2=\tilde{\delta}_1+\tilde{\delta}_2+\tilde{\delta}_3\text{ in }\Lambda^4H\oplus V_{[2^2]}\oplus\Lambda^2H.$$
	Recall that the homomorphism $\phi:\mathrm{Sym}^2\Lambda^2H\to \Hom(H,\p(3))$ vanishes on $\Lambda^4H$. Thus we have
	$$\phi(\delta_2)=\xi_{[2^2]}\text{ and }\phi(\delta_3)=\xi_{[1^2]}$$
	and
	$$\phi(\tilde{\delta}_2)=\tilde{\xi}_{[2^2]}\text{ and }\phi(\tilde{\delta}_3)=\tilde{\xi}_{[1^2]}.$$
	Using Corollary \ref{projection formula 1}, we obtain
	$$\delta_1+\delta_2=(a_1\wedge b_1)^2-\frac{3}{g+1}(a_1\wedge b_1)\cdot\theta+\frac{3}{(g+1)(2g+1)}\theta^2$$
	and 
	$$\tilde{\delta}_1+\tilde{\delta}_2=(a_g\wedge b_g)^2-\frac{3}{g+1}(a_g\wedge b_g)\cdot\theta+\frac{3}{(g+1)(2g+1)}\theta^2.$$
	Since $\phi$ vanishes on $\Lambda^4H$ and $\Q\theta^2$, we then have
	$$[\xi_{[2^2]},\tilde{\xi}_{[2^2]}]=\left[\phi\left((a_1\wedge b_1)^2-\frac{3}{g+1}(a_1\wedge b_1)\cdot \theta\right), \phi\left((a_g\wedge b_g)^2-\frac{3}{g+1}(a_g\wedge b_g)\cdot \theta\right)\right].$$ 
	We evaluate this bracket on the vector $a_2\in H$:
	\begin{align*}
	[\xi_{[2^2]},\tilde{\xi}_{[2^2]}](a_2)=&[\phi((a_1\wedge b_1)^2), \phi((a_g\wedge b_g)^2)](a_2)\\
	                                       & -\frac{3}{g+1}[\phi((a_1\wedge b_1)^2), \phi((a_g\wedge b_g)\cdot \theta)](a_2)\\
	                                      &-\frac{3}{g+1}[\phi((a_1\wedge b_1)\cdot \theta),\phi((a_g\wedge b_g)^2) ](a_2)\\
	                                      &+\frac{9}{(g+1)^2}[\phi((a_1\wedge b_1)\cdot \theta),\phi((a_g\wedge b_g)\cdot \theta)](a_2)\\
	                                      =&\frac{9}{(g+1)^2}[\phi((a_1\wedge b_1)\cdot \theta),\phi((a_g\wedge b_g)\cdot \theta)](a_2)\\
	                                      =&\frac{9}{(g+1)^2}[\phi((a_1\wedge b_1)(a_2\wedge b_2)),\phi((a_g\wedge b_g)(a_1\wedge b_1))](a_2)\\
	                                      &+\frac{9}{(g+1)^2}[\phi((a_1\wedge b_1)(a_2\wedge b_2)),\phi((a_g\wedge b_g)(a_2\wedge b_2))](a_2)\\
                                          &+\frac{9}{(g+1)^2}[\phi((a_1\wedge b_1)(a_g\wedge b_g)),\phi((a_g\wedge b_g)(a_2\wedge b_2))](a_2)\\      
                                          =&\frac{9}{(g+1)^2}(-[[[[a_g,b_g],a_1],b_1],a_2]+[[a_1,[b_1,[a_g,b_g]]]],a_2]\\
                                          &+[[a_g,b_g],[[a_1,b_1],a_2]]-[[a_1,b_1],[[a_g,b_g],a_2]]\\
                                          &+[[[[a_1,b_1],a_g],b_g],a_2]-[[a_g,[b_g,[a_1,b_1]]],a_2] )\\          =&\frac{9}{(g+1)^2}(-[a_2,[[a_1,b_1],[a_g,b_g]]]+[a_2,[[a_1,b_1],[a_g,b_g]]]\\&-[a_2,[[a_1,b_1],[a_g,b_g]]])\\
                                          =&-\frac{9}{(g+1)^2}[a_2,[[a_1,b_1],[a_g,b_g]]]
	                                   	\end{align*}

	This computation can be also  diagrammatically  described (see \cite{mss} for the description). Thus we have
	$$[\xi_{[2^2]},\tilde{\xi}_{[2^2]}](a_2)=-\frac{9}{(g+1)^2}\Big[a_2,\big[[a_1,b_1],[a_g,b_g]\big]\Big]\in \p(5).$$ This vector can be mapped to the highest vector of the copy of  $V_{[3+1^2]}$ appearing in $\p(5)$ via the action of $\s\p$, where $\s\p$ is the Lie algebra of $\Sp(H)$. Hence it is nonzero in $\Gr^L_\bullet\p$. Thus the derivation $[\xi_{[2^2]},\tilde{\xi}_{[2^2]}]$ is a nonzero inner derivation in $\Der_4\p$. 
\end{proof}

\section{Relative and Weighted completions of hyperelliptic mapping class groups}
\subsection{Review of weighted completion of a profinite group}
We will begin this section by reviewing briefly the theory of weight completion of a profinite group. The detailed introduction  and the results stated here can be found in \cite{hain2,wei}. It is a variant of relative completion of a discrete group and it linearizes a profinite group. \\
\indent  Suppose that $\G$ is a profinite group, $R$ a reductive group over $\Ql$, $\omega:\Gm\to R $ is a central cocharacter, and $\rho:\G\to R(\Ql)$ a continuous representation whose image is Zariksi-dense. An extension $G$ of $R$ is said to be negatively weighted if it is an extension of $R$ by a unipotent $\Ql$-group $U$
such that $H_1(U)$ as a $\Gm$-representation via $\omega$ has only negative weights. The weighted completion $(\cG\to R, \tilde{\rho}:\G\to \cG(\Ql))$ of $\G$ with respect o $\rho$ and $\omega$ is the projective limit of the pairs of the form 
$$(\pi_G: G\to R, \rho_G:\G\to G(\Ql)),$$
where $G$ is a negatively weighted extension of $R$ and $\rho_G$ is a continuous Zariski-dense representation lifting $\rho$, i.e., $\pi_G(\Ql)\circ \rho_G=\rho$. \\
\indent The completion $\cG$ is a negatively weighted extension of $R$ by a prounipotent $\Ql$-group $\U$. The Levi's theorem implies that the extension 
$$1\to \U\to\cG\to R\to 1$$ splits and any two splittings are conjugate by an element of $\U$.  Fix a splitting $s:R\to \cG$. Denote the Lie algebras of $\cG$, $\U$, and $R$ by $\g$, $\u$, and $\r$, respectively. The adjoint action of $\cG$ on $\g$ induces natural weight filtrations $W_\bullet$ on these Lie algebras via $s\circ\omega $. It can be shown that the weight filtrations do not depend on the choice of a splitting $s$. In below, we summarize the key properties of the weight filtrations:
\begin{theorem} The natural weight filtration $W_\bullet$ satisfies the following properties.
	\begin{enumerate}\label{weighted property 1}
		\item $W_0\g=\g$, $W_{-1}\g=W_{-1}\u$, and $\Gr^W_0\g=\r$.
		\item the action of $\U$ on $\Gr^W_m\g$ is trivial and hence each graded piece $\Gr^W_m\g$ is an $R$-module.
		\item the functor $\Gr^W_\bullet$ in the category of  $\cG$-modules is exact.
	\end{enumerate}
\end{theorem}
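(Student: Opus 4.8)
The plan is to unwind the definition of the natural weight filtration through the lift $\tilde\omega=s\circ\omega:\Gm\to\cG$ of the central cocharacter along the fixed Levi splitting $s:R\to\cG$, and then read off all three assertions from the resulting $\Gm$-grading together with the negatively weighted hypothesis and the linear reductivity of $\Gm$. First I would set $\tilde\omega=s\circ\omega$ and compose the adjoint action of $\cG$ on $\g$ with $\tilde\omega$; this gives a $\Gm$-action and hence a weight decomposition $\g=\bigoplus_n\g_n$, with $W_m\g=\bigoplus_{n\le m}\g_n$. This is the natural weight filtration, independent of $s$ as recalled above.

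For part (i) the two basic computations are as follows. Since $\omega$ is a central cocharacter, $\tilde\omega(\Gm)$ is central in $s(R)$, so it acts trivially under $\Ad$ on $\r=\mathrm{Lie}\,s(R)$; thus $\r$ sits in weight $0$. On the other hand, the negatively weighted hypothesis says $H_1(\u)=\u/[\u,\u]$ is concentrated in weights $<0$; because $\tilde\omega$ acts by Lie algebra automorphisms the bracket is weight-additive, so an induction along the lower central series of the pronilpotent $\u$ shows that every iterated bracket of negative-weight generators again has negative weight, whence $W_{-1}\u=\u$. Combining these with the $R$-module splitting $\g=\r\oplus\u$ gives $W_0\g=\g$, $W_{-1}\g=\u=W_{-1}\u$, and $\Gr^W_0\g=\g/\u=\r$.

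For part (ii) I would note that, since $\u$ lies entirely in negative weights, $\ad(\u)$ strictly lowers the weight grading, and therefore the induced action of $\u$—hence of the prounipotent $\U$—on each $\Gr^W_m\g$ is zero. Moreover, because $\tilde\omega(\Gm)$ is central in $s(R)$, the action $\Ad(s(R))$ preserves weights while $\Ad(\U)$ lowers them, so the adjoint action of all of $\cG$ preserves $W_\bullet$ and descends to $\Gr^W_\bullet$; the vanishing of the $\U$-action then means this descended action factors through $\cG/\U=R$, making each $\Gr^W_m\g$ an $R$-module.

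For part (iii) the key observation is that the weight filtration on an arbitrary $\cG$-module $M$ is the filtration associated to the grading $M=\bigoplus_n M_n$ by $\tilde\omega(\Gm)$-weights. Any morphism $f:M\to N$ of $\cG$-modules commutes with the $\cG$-action, in particular with $\tilde\omega(\Gm)$, so $f$ is $\Gm$-equivariant and hence graded, $f(M_n)\subseteq N_n$; thus $f$ is automatically strict for $W_\bullet$ and $\Gr^W_m f=f|_{M_m}$. Since $\Gm$ is linearly reductive in characteristic zero the grading splits, so extracting a fixed weight space $M\mapsto M_m$ is exact, and therefore so is $\Gr^W_\bullet$. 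The only genuinely delicate step is controlling the weights of the whole of $\u$ from the hypothesis on $H_1(\u)$ alone; the remaining assertions are formal consequences of the centrality of $\omega$ and the reductivity of $\Gm$.
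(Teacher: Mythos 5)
Your proof is correct, and since the paper states this theorem without proof (it is quoted as a summary of the weighted-completion formalism of Hain--Matsumoto \cite{wei, hain2}), your argument is essentially the standard one from those references: grade $\g$ by $\tilde\omega=s\circ\omega$, propagate the negativity of $H_1(\u)$ through the lower central series, and use centrality of $\omega$ plus linear reductivity of $\Gm$ for strictness and exactness. No gaps.
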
	
The following result shows that $H_1(\u)$ is determined by the cohomology of $\G$. Note that each $R$-representation $V$ is also a $\G$-representation via $\rho$. 
\begin{proposition}\label{generator iso} Let $V$ be a finite dimensional $R$-representation of weight $m$. Then there is a natural isomorphism
	$$\Hom_R(H_1(\u), V)\cong \Hom_R(\Gr^W_mH_1(\u), V)\cong \left\{
	\begin{array}{ll}
	0 & m\geq 0 \\
	H^1(\G, V) & m< 0 \\
	\end{array} 
	\right.
	.$$
	
\end{proposition}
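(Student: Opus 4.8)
The plan is to factor the statement into a formal weight-decomposition step, an elementary vanishing step, and one genuine input from the theory of weighted completion, namely the cohomological comparison theorem of Hain and Matsumoto in \cite{wei}. For the first isomorphism, recall that $R$ is reductive and that the weight grading on every $\cG$-module is induced by the central cocharacter $\omega\colon\Gm\to R$, whose image lies in the centre of $R$. Consequently each finite-dimensional $R$-module splits canonically as the direct sum of its weight-graded pieces, and every $R$-equivariant map preserves weights. Writing $H_1(\u)=\bigoplus_n\Gr^W_n H_1(\u)$ as $R$-modules and using that $V$ is pure of weight $m$, any map in $\Hom_R(H_1(\u),V)$ must annihilate the summands of weight $\neq m$, which gives $\Hom_R(H_1(\u),V)\cong\Hom_R(\Gr^W_m H_1(\u),V)$.

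For the second isomorphism I would separate the two weight ranges. When $m\geq 0$ the negatively weighted hypothesis on the extension $1\to\U\to\cG\to R\to 1$ forces $H_1(\u)=H_1(\U)$ to be concentrated in strictly negative weights, so $\Gr^W_m H_1(\u)=0$ and both Hom-groups vanish, matching the stated value $0$. Note that $H^1(\G,V)$ itself need not vanish in this range (for instance for the trivial module), which is exactly why the value $0$, rather than $H^1(\G,V)$, appears for $m\geq 0$.

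The substance lies in the range $m<0$. Here I would first identify $\Hom_R(H_1(\u),V)$ with $H^1(\cG,V)$ by running the Hochschild--Serre spectral sequence for $1\to\U\to\cG\to R\to 1$ with coefficients in $V$, viewed as a $\cG$-module through $\cG\to R$. Since $R$ is reductive over a field of characteristic zero, $H^p(R,-)$ vanishes for $p>0$ and is the functor of $R$-invariants for $p=0$, so the sequence degenerates to $H^n(\cG,V)\cong H^n(\U,V)^R$. As $\U$ acts trivially on $V$, one has $H^1(\U,V)\cong\Hom(H_1(\u),V)$ as $R$-modules, whence $H^1(\cG,V)\cong\Hom_R(H_1(\u),V)$; this identity is in fact valid for all $m$.

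The crux, and the step I expect to be the \emph{main obstacle}, is the comparison isomorphism $H^1(\cG,V)\cong H^1(\G,V)$ (continuous cohomology) induced by the canonical homomorphism $\G\to\cG(\Ql)$, which holds precisely because $V$ has negative weight. This is the defining cohomological property of the weighted completion established in \cite{wei}: for a continuous Zariski-dense representation into a reductive group equipped with a negatively weighted central cocharacter, the weighted completion computes the low-degree continuous cohomology of $\G$ on negatively weighted coefficients, giving an isomorphism in degree one and an injection in degree two. Granting this, for $m<0$ I would compose the three identifications to obtain $\Hom_R(\Gr^W_m H_1(\u),V)\cong\Hom_R(H_1(\u),V)\cong H^1(\cG,V)\cong H^1(\G,V)$, which is the asserted formula; naturality in $V$ is inherited from the naturality of each of the three steps, and the restriction to negative weights is exactly what prevents the comparison from being applied in the range $m\geq 0$, consistently with the case distinction.
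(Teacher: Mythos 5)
Your argument is correct. The paper states this proposition without proof, recalling it as a known result from the theory of weighted completion in \cite{wei}; your reconstruction --- the weight decomposition of $H_1(\u)$ via the central cocharacter, the vanishing for $m\geq 0$ forced by the negatively weighted condition on $\cG$, the Hochschild--Serre identification $H^1(\cG,V)\cong \Hom_R(H_1(\u),V)$, and the degree-one comparison $H^1(\cG,V)\cong H^1(\G,V)$ for $V$ of negative weight --- is precisely the standard argument from that source, with the key external input correctly isolated and attributed.
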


\subsection{Application to the universal hyperelliptic curves}
Let $k$ be a field of characteristic zero such that the image of the $\ell$-adic cyclotomic character $\chi_\ell:G_k\to \Zl^\times$ is infinite.  Fix an algebraic closure $\bar k$ of $k$. Assume $g\geq 2$. Let $\bar\eta:\Spec\Omega\to\cH_{g,n+1/\bar k}$ be a geometric point of the stack $\cH_{g,n+1/\bar k}$. Denote by $\etabar_0$ the image of $\etabar$ in $\cH_{g/\bar k}$ and by $\bar{x}_j$ the image of $\etabar$ in $\cH_{g,1/\bar k}$ under the $j$th projection $\cH_{g,n+1/\bar k}\to \cH_{g,1/\bar k}$. 
Let $C$ be the fiber of the universal curve over $\bar\eta_0$. The fiber of $\cC^{n+1}_{\cH_g/k}\to\cH_{g/k}$ over $\etabar_0$ is the product $C^{n+1}$ with the base point $x_\etabar=(\bar{x}_0,\ldots,\bar{x}_n)$.  The fiber of $\cH_{g,n+1/k}\to \cH_{g/k}$ is $C^{n+1}-\Delta$, where $\Delta$ is the union of all diagonal divisors of $C^{n+1}$., i.e., $\Delta=\cup\Delta_{ij}$, where each $\Delta_{ij}$ consists of points of $C^{n+1}$ with $i$th and $j$th components equal.  The fiber $C^{n+1}-\Delta$ has also the base point $x_\etabar$. 
Denote by $\D^\geom_{g,n}$ the relative completion of $\pi_1(\cH_{g,n/\bar k},\bar\eta)$ with respect to the natural monodromy representation
$$\rho^\geom_\etabar:\pi_1(\cH_{g,n/\bar k},\bar\eta)\to\pi_1(\cH_{g/\bar k},\etabar_0)\to \Sp(H_\Ql),$$ 
where $H_\Ql=H^1_\et(C, \Ql(1))$. For the definition and properties of relative completion, see \cite{hain0, hain2, hain3}. 
Denote the prounipotent radical of $\D^\geom_{g,n}$ by $\cV^\geom_{g,n}$ and its Lie algebra by $\mathfrak{v}^\geom_{g,n}$. 
Denote the $\ell$-adic unipotent completions of $\pi_1(C, \bar{x}_j)$ and $\pi_{g,n}:=\pi_1(C^n-\Delta, \etabar)$ by  $\cP_j$ and $\cP_{g,n}$ and their Lie algebras by $\p_j$ and $\p_{g,n}$, respectively.  As convention, denote $\cP_0$ and $\p_0$ by $\cP$ and $\p$, respectively. \\
\indent  Recall that in section \ref{hyperelliptic J homo and Dehn twists}, we defined $\p$ to be the Lie algebra of the Malcev completion of the topological fundamental group $\Pi$. The Lie algebra of the $\ell$-adic unipotent completion here is obtained by base change to $\Ql$, and so by abuse of notation we denote it by $\p$ as well. 

\begin{variant}The universal curve over $\cH_{g,(1)/\bar k}$ induces a homomorphism 
$$\Psi: \cH_{g,(1)/\bar k}\to \cH_{g,1/\bar k}$$
that makes the diagram 
$$\xymatrix{
\cH_{g,(1)/\bar k}\ar[dr]\ar[d]_\Psi&\\
\cH_{g,1/\bar k}\ar[r]&\cH_{g/\bar k}
}
$$
commute.
Let $\bar\eta:\Spec\Omega\to\cH_{g,(1)/\bar k}$ be a geometric point of $\cH_{g,(1)/\bar k}$. We regard $\bar\eta$ as geometric points of $\cH_{g,1/\bar k}$ and $\cH_{g/\bar k}$ as well. Denote by $\D^\geom_{g,(1)}$ the relative completion of $\pi_1(\cH_{g,(1)/\bar k}, \bar\eta)$ with respect to the natural monodromy representation
$$\rho^w_\etabar:\pi_1(\cH_{g,(1)/\bar k}, \bar\eta)\to \pi_1(\cH_{g/\bar k}, \bar\eta)\to \Sp(H_\Ql).$$
That $\Delta_{g,(1)}$ is a finite-index subgroup of $\Delta_g$ implies that $\rho^w_\etabar$ has a Zariski-dense image. 
Denote the prounipotent radical of $\D^\geom_{g,(1)}$ by $\cV^\geom_{g,(1)}$ and its Lie algebra by $\mathfrak{v}^\geom_{g,(1)}$. We have the following diagram 
$$\xymatrix{
&&\mathfrak{v}^\geom_{g,(1)}\ar[dr]\ar[d]&&\\
0\ar[r]&\p\ar[r]&\mathfrak{v}^\geom_{g,1}\ar[r]&\mathfrak{v}^\geom_g\ar[r]&0,
}
$$
where the injectivity $\p\to \mathfrak{v}_{g,1}^\geom$ follows from the fact that the center of $\p$ is free (see \cite{M.A}).
\end{variant}

 Denote by $\D^\cC_{g,n}$, $\D_{g,n}$ and $\widehat{\D}_{g,n}$, respectively, the weighted completions of the fundamental groups $\pi_1(\cC_{\cH_{g,n/k}}, \etabar)$, $\pi_1(\cH_{g,n/k}, \bar\eta)$ and $\pi_1(\cC^n_{\cH_{g/k}},\bar\eta)$ with respect to 
 their natural  monodromy representations to $R:=\GSp(H_\Ql)$
and the central cocharacter $\omega:\Gm\to R$ defined by 
$a\mapsto a^{-1}\id.$ 
Denote the pronilpotent radicals of $\D^\cC_{g,n}$, $\D_{g,n}$ and $\widehat{\D}_{g,n}$ by $\cV^\cC_{g,n}$, $\cV_{g,n}$ and $\widehat{\cV}_{g,n}$, respectively. Denote the Lie algebras of $\D^\cC_{g,n}$, $\D_{g,n}$, $\widehat{\D}_{g,n}$, $\cV^\cC_{g,n}$, $\cV_{g,n}$, and $\widehat{\cV}_{g,n}$ by $\d^\cC_{g,n}$, $\d_{g,n}$, $\widehat{\d}_{g,n}$,  $\mathfrak{v}^\cC_{g,n}$, $\mathfrak{v}_{g,n}$, and $\widehat{\mathfrak{v}}_{g,n}$, respectively. The morphisms $\cH_{g,n/k}\to \cC^n_{\cH_g/K}\to (\cH_{g,1/k})^n$ induce a commutative diagram
$$\xymatrix{
	1\ar[r]& \pi_1(C^n-\Delta, x_\etabar)\ar[d]\ar[r]&\pi_1(\cH_{g,n/k}, \etabar)\ar[r]\ar[d]&\pi_1(\cH_{g/k}, \etabar_0)\ar[r]\ar@{=}[d]&1\\
	1\ar[r]&\prod_{j=1}^n\pi_1(C,\bar{x}_j)\ar[r]\ar@{=}[d]&\pi_1(\cC^n_{\cH_g/k}, \etabar)\ar[d]\ar[r]&\pi_1(\cH_{g/k}, \etabar_0)\ar[r]\ar[d]&1\\
	1\ar[r]&\prod_{j=1}^n\pi_1(C,\bar{x}_j)\ar[r]&\Pi_{j=1}^n\pi_1(\cH_{g,1/k},\bar{x}_j)\ar[r]&\pi_1(\cH_{g/k}, \etabar_0)^n\ar[r]&1.
}
$$
Denote the Lie algebra of the weighted completion of $\pi_1(\cH_{g,1/k},\bar{x}_j)$ by $\d^{(j)}_{g,1}$. The following proposition is the direct modification of \cite[Prop.~8.6]{hain2} to our case.
\begin{proposition}\label{comm diag}
	If $g\geq 2$, then the above diagram induces the commutative diagram
	$$\xymatrix{
		0\ar[r]& \p_{g,n}\ar[d]\ar[r]&\d_{g,n}\ar[r]\ar[d]&\d_g\ar[r]\ar@{=}[d]&1\\
		1\ar[r]&\bigoplus_{j=1}^n\p_j\ar[r]\ar@{=}[d]&\widehat{\d}_{g,n}\ar[d]\ar[r]&\d_g\ar[r]\ar[d]&1\\
		1\ar[r]&\bigoplus_{j=1}^n\p_j\ar[r]&\bigoplus_{j=1}^n\d^{(j)}_{g,1}\ar[r]&(\d_{g})^n\ar[r]&1.
	}
	$$ The same result holds for the Lie algebras of the relative completions of the corresponding geometric fundamental groups. 
	\qed
\end{proposition}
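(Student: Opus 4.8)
The plan is to apply the weighted‐completion functor to the three rows of the displayed diagram of profinite fundamental groups, and then to recover the vertical arrows and the commutativity of the squares from functoriality, following verbatim the argument of \cite[Prop.~8.6]{hain2}. The observation that makes this transfer possible is that passing from $\M_{g/k}$ to the hyperelliptic locus $\cH_{g/k}$ alters only the base of each fibration: the fibers $C^n-\Delta$, $\prod_j C$, and $C$ are literally the same spaces as in the full moduli problem, so their $\ell$-adic unipotent completions, with Lie algebras $\p_{g,n}$ and $\bigoplus_j\p_j$, are unchanged. Consequently every structural property of the fiber that Hain exploits---topological finite generation, negativity of the weights on $H_1$ (recall $\p(1)=H$ has weight $-1$), and the internal Lie-algebra structure---remains valid without modification.

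First I would verify the hypotheses needed to run the weighted-completion exactness machinery of \cite{wei} on each horizontal extension. For the top row $1\to\pi_1(C^n-\Delta)\to\pi_1(\cH_{g,n/k})\to\pi_1(\cH_{g/k})\to1$, the monodromy representation to $R=\GSp(H_\Ql)$ factors through $\pi_1(\cH_{g/k})$, since $\H_\Ql$ is pulled back from $\cH_{g/k}$; hence the normal subgroup $\pi_1(C^n-\Delta)$ maps trivially into $R$ and lands in the prounipotent radical, so that its weighted completion is just its unipotent completion. Because the monodromy has Zariski-dense image (A'Campo, \cite{Acamp}, as recorded in the discussion preceding this section), the weighted completion $\d_g$ of $\pi_1(\cH_{g/k})$ is defined, and the general theory produces a right-exact sequence of Lie algebras $\p_{g,n}\to\d_{g,n}\to\d_g\to0$ in which the kernel of $\d_{g,n}\to\d_g$ is the image of the fiber's unipotent completion. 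Applying the same to the middle and bottom rows yields $\bigoplus_j\p_j\to\widehat\d_{g,n}\to\d_g\to0$ and $\bigoplus_j\p_j\to\bigoplus_j\d^{(j)}_{g,1}\to(\d_g)^n\to0$.

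The main obstacle is left-exactness, i.e.\ the injectivity of the fiber maps $\p_{g,n}\hookrightarrow\d_{g,n}$, $\bigoplus_j\p_j\hookrightarrow\widehat\d_{g,n}$, and $\bigoplus_j\p_j\hookrightarrow\bigoplus_j\d^{(j)}_{g,1}$. This is exactly the point which in the one-pointed case is settled by the freeness of the center of $\p$ (cf.\ the Variant above and \cite{M.A}). Since these injectivity assertions concern only the fibers and their completions---which, as noted, coincide with those of the moduli problem---I would quote Hain's injectivity argument directly: for the product fibers the statement reduces factorwise to the single-curve case, and for $\p_{g,n}$ it follows from the corresponding assertion for $\pi_1(C^n-\Delta)$ used in \cite[Prop.~8.6]{hain2}.

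Finally I would assemble the three exact rows into the asserted diagram. The vertical arrows $\p_{g,n}\to\bigoplus_j\p_j$, $\d_{g,n}\to\widehat\d_{g,n}$, and $\widehat\d_{g,n}\to\bigoplus_j\d^{(j)}_{g,1}$, together with the identity on $\bigoplus_j\p_j$ and the diagonal $\d_g\to(\d_g)^n$, are obtained by functoriality of weighted and unipotent completion applied to the vertical maps of the profinite diagram, and commutativity of each square follows from that same functoriality. The concluding assertion for the geometric fundamental groups is handled identically, replacing weighted completion by relative completion and discarding the arithmetic quotient $G_k$, so that no new ingredient is required.
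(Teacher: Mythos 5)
Your proposal is correct and follows essentially the same route as the paper, which states the proposition with no written proof beyond declaring it ``the direct modification of \cite[Prop.~8.6]{hain2}'': you apply weighted completion row by row using the Zariski density from A'Campo, obtain right-exactness from the general theory, get injectivity of the fiber terms from the (center-related) argument for $\p$ and $\p_{g,n}$ exactly as in Hain, and recover the vertical maps and commutativity by functoriality. The one observation that justifies the whole transfer --- that only the base changes in passing to the hyperelliptic locus while the fibers and their unipotent completions are unchanged --- is precisely the point the paper is implicitly relying on, and you make it explicit.
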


\begin{remark}
	Since the functor $\Gr^W_\bullet$ is exact, after applying $\Gr^W_\bullet$ to this diagram, the rows become exact sequences of $S_n\times \GSp(H_\Ql)$-modules. 
\end{remark}

By Theorem \ref{weighted property 1}, as a $\D_{g,n}$-module, the Lie algebra $\mathfrak{v}_{g,n}$ admits a natural weight filtration $W_\bullet\mathfrak{v}_{g,n}$ that satisfies the properties: $W_{-1}\mathfrak{v}_{g,n}=\mathfrak{v}_{g,n}$,  and the action of $\D_{g,n}$ on  $\Gr^W_{-m}\mathfrak{v}_{g,n}:=W_{-m}\mathfrak{v}_{g,n}/W_{-m-1}\mathfrak{v}_{g,n}$ factors through the action of $R$. Similarly for $\widehat{\d}_{g,n}$. The Lie algebras $\p$ and $\p_{g,n}$ are also negatively weighted as $\D_{g,n}$-modules, i.e., $W_{-1}\p=\p$ and $W_{-1}\p_{g,n}=\p_{g,n}$ and satisfy that $\Gr^W_{-1}H_1(\p)=H_1(\p)$ and $\Gr^W_{-1}H_1(\p_{g,n})=H_1(\p_{g,n})$ (see \cite{hain0}).
 When $n=1$, there is a commutative diagram of pronilpotent Lie algebras
$$\xymatrix{
&&\mathfrak{v}^\geom_{g,(1)}\ar[dr]\ar[d]&&\\
0\ar[r]&\p\ar@{=}[d]\ar[r]&\mathfrak{v}^\geom_{g,1}\ar[d]\ar[r]&\mathfrak{v}^\geom_g\ar[d]\ar[r]&0\\
0\ar[r]&\p\ar[r]&\mathfrak{v}_{g,1}\ar[r]&\mathfrak{v}_g\ar[r]&0.
}
$$
The adjoint action of $\mathfrak{v}^\geom_{g,(1)}$ on $\p$ factors as
$$\mathfrak{v}^\geom_{g,(1)}\to\mathfrak{v}^\geom_{g,1}\to\mathfrak{v}_{g,1}\to \Der\p.$$
\begin{variant}
In this paper, we will use the weighted completion of the fundamental group $\pi_1(\cH_{g,n/k}[r],\etabar)$.
For $r\geq 3$, it is represented by a smooth quasi-projective variety.  There is a unique conjugacy class of isomorphisms 
$$\pi_1(\cH_{g,n/\bar k}[r], \etabar)\cong (\Delta_{g,n}[r])^\wedge.$$ Denote the weighted completion of $\pi_1(\cH_{g,n/k}[r],\etabar)$ by $\D_{g,n}[r]$ and its prounipotent radical by $\cV_{g,n}[r]$. Denote their Lie algebras by $\d_{g,n}[r]$ and $\mathfrak{v}_{g,n}[r]$, respectively.  Proposition \ref{comm diag} holds for an abelain level $r\geq 1$ as well. 
\end{variant}
Unless stated otherwise, we set $R=\GSp(H_\Ql)$. An irreducible $R$-representation $V$ is said to be geometrically nontrivial when the restriction to $\Sp(H_\Ql)$ is nontrivial. 
\begin{proposition}\label{even weight}Suppose that $g\geq2$ and $V$ is a finite-dimensional irreducible $R$-representation of weight $m$. If
\begin{enumerate}
\item $r\geq 1$ and $V$ is geometrically nontrivial, then
$$\Hom_{R}(H_1(\mathfrak{v}_g[r]), V)\cong 
 \left\{
        \begin{array}{ll}
            H^1(\pi_1(\cH_{g/k}[r],\bar\eta), V) & m=-2\\
            0 & otherwise
        \end{array}
    \right.
$$
\item $V=\Ql(s)$, then
$$\Hom_{R}(H_1(\mathfrak{v}_g), \Ql(s))\cong 
 \left\{
        \begin{array}{ll}
        H^1(G_k, \Ql(s)) & s\geq 1\\
            0 & otherwise
        \end{array}
    \right.$$
\end{enumerate}
\end{proposition}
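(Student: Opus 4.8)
The plan is to reduce both statements, via Proposition \ref{generator iso}, to a computation of the ordinary cohomology $H^1(\pi_1(\cH_{g/k}[r]), V)$, and then to control that cohomology using the compact type compactification together with the simple connectivity of $\cH^c_g[0]$. Applying Proposition \ref{generator iso} to $\G=\pi_1(\cH_{g/k}[r],\etabar)$ (resp.\ $\pi_1(\cH_{g/k},\etabar)$ for (ii)) gives a natural isomorphism $\Hom_R(H_1(\mathfrak{v}_g[r]), V)\cong H^1(\pi_1(\cH_{g/k}[r]), V)$ when $m<0$, and $0$ when $m\geq 0$; this already disposes of the case $m\geq 0$ in (i) and of $s\leq 0$ in (ii), since $\Ql(s)$ has weight $-2s$. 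For $m<0$ I would run the five term exact sequence of the extension $1\to \pi_1(\cH_{g/\bar k}[r])\to \pi_1(\cH_{g/k}[r])\to G_k\to 1$. For a geometrically nontrivial irreducible $V$ the Zariski density of the geometric monodromy in $\Sp(H_\Ql)$ forces $V^{\pi_1(\cH_{g/\bar k}[r])}=0$, so the sequence yields an isomorphism $H^1(\pi_1(\cH_{g/k}[r]), V)\cong H^1(\cH_{g/\bar k}[r], \V)^{G_k}$.

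For (i) the task becomes to show $H^1(\cH_{g/\bar k}[r], \V)^{G_k}=0$ for $m\neq -2$. I would pass to the compact type compactification $\cH^c_{g/\bar k}[r]$, to which $\V$ extends since the monodromy around the separating boundary is a Dehn twist on a separating curve and so acts trivially on $H$. Writing $D=\cH^c_{g/\bar k}[r]\setminus \cH_{g/\bar k}[r]$, the Gysin sequence gives an exact sequence
$$H^1(\cH^c_{g/\bar k}[r], \V)\to H^1(\cH_{g/\bar k}[r], \V)\to H^0(D, \V|_D(-1))\to H^2(\cH^c_{g/\bar k}[r], \V).$$
The decisive input is the theorem of Brendle--Margalit--Putman \cite{BMP} that $\cH^c_g[0]$ is simply connected: since $\cH^c_{g/\bar k}[r]\cong G_g[r]\backslash \cH^c_g[0]$ this gives $\pi_1(\cH^c_{g/\bar k}[r])\cong G_g[r]$, whence $H^1(\cH^c_{g/\bar k}[r], \V)\cong H^1(G_g[r], V)$. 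As $g\geq 2$, the group $G_g[r]$ is a finite index subgroup of the higher rank lattice $\Sp(H_\Z)$ and satisfies the congruence subgroup property, so the vanishing theorem for the first cohomology of higher rank arithmetic groups with nontrivial coefficients yields $H^1(G_g[r], V)=0$. Hence $H^1(\cH_{g/\bar k}[r], \V)$ injects into the boundary term $H^0(D, \V|_D(-1))$.

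It then remains to read off the Galois weight of $H^0(D, \V|_D(-1))$. The invariants $H^0(D, \V|_D)$ are spanned by the polarization classes of the components of the reducible curves parametrized by $D$; they are therefore of Tate type, being isomorphic to $\Ql(-m/2)^{\oplus N}$ when $m$ is even and vanishing when $m$ is odd (no invariant of odd weight can be built from polarizations). After the twist, $H^0(D, \V|_D(-1))$ is a sum of copies of $\Ql(-m/2-1)$, pure of weight $m+2$. Because the image of $\chi_\ell$ is infinite we have $\Ql(j)^{G_k}=0$ for all $j\neq 0$, so $H^0(D, \V|_D(-1))^{G_k}=0$ unless $m=-2$; this proves (i), the odd weight case being killed already by the vanishing of the boundary invariants (this parity is the source of the ``even weight'' in the statement). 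For (ii), where $V=\Ql(s)$ is geometrically trivial, the five term sequence instead has $H^1(G_k, \Ql(s))$ as kernel and $\big(H^1(\cH_{g/\bar k}, \Ql)\otimes \Ql(s)\big)^{G_k}$ as the next term; since the abelianization of $\Delta_g$ is finite for $g\geq 2$ we have $H^1(\cH_{g/\bar k}, \Ql)=0$, and the sequence collapses to the asserted isomorphism $H^1(\pi_1(\cH_{g/k}), \Ql(s))\cong H^1(G_k, \Ql(s))$ for $s\geq 1$.

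The step I expect to be the main obstacle is the control of the interior cohomology $H^1(\cH^c_{g/\bar k}[r], \V)$. A priori $\pi_1(\cH^c_{g/\bar k}[r])$ could be a nontrivial extension of $G_g[r]$ by $\pi_1(\cH^c_g[0])$, and any resulting classes would have weight $m+1$ and could contribute to weights other than $-2$; it is exactly the simple connectivity of $\cH^c_g[0]$ that removes this fundamental group and reduces the interior to arithmetic group cohomology, where higher rank vanishing applies. The second point needing care is the identification of $H^0(D, \V|_D)$ as a Tate type module of the correct weight, as this is what makes the weak hypothesis of infinite cyclotomic image (rather than, say, $k$ finitely generated) sufficient to pin the surviving weight to $-2$.
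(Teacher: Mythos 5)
Your argument is correct and follows essentially the same route as the paper: the Gysin sequence for $\cH_{g/\bar k}[r]\hookrightarrow\cH^{c}_{g/\bar k}[r]$, the Brendle--Margalit--Putman simple connectivity of $\cH^c_g[0]$ combined with vanishing of $H^1$ of the arithmetic group $G_g[r]$ (the paper cites Borel) to push $H^1(\cH_{g/\bar k}[r],\V)$ into the boundary term of weight $m+2$, Hochschild--Serre plus Proposition \ref{generator iso} to conclude, and the torsion abelianization of $\Delta_g$ for part (ii). Your explicit identification of the boundary invariants as Tate-type classes built from polarizations (which also kills the odd-weight case directly) is a correct refinement that the paper leaves implicit, but it does not change the structure of the proof.
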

\begin{proof}Let $\V$ be  the  local system over $\cH^{c}_{g/k}[r]$  corresponding to the representation $V$. Associated to the inclusion $\cH_{g/k}[r]\to \cH^{c}_{g/k}[r]$, there is a Gysin sequence:
$$ \cdots \to H^1_\et(\cH^{c}_{g/\bar k}[r], \V)\to H^1_\et(\cH_{g/\bar k}[r], \V)\to \bigoplus _iH^0_\et(D_i, \V(-1))\to \cdots,$$ where $\bigcup_iD_i=\cH^{c}_{g/\bar k}[r]-\cH_{g/\bar k}[r]$. The representation $V$ is defined over $\Q$ and denote by $V_\Q$  the restriction of $V$ to $\Q$. We have $V=V_\Q\otimes \Ql$. 
The simply-connectedness of $\cH^c_g[0]$ implies that there is an isomorphism
$H^1(\cH^{c}_g[r](\C), V_\Q)\cong H^1(G_g[r], V_\Q)$, which is trivial by a theorem of Borel in \cite{bor1,bor2}.  Since there is an isomorphism $H^1(\cH^{c}_g[r], V_\Q)\otimes \Ql\cong H^1_\et(\cH^{c}_{g/\bar k}[r], \V)$, it follows that $H^1_\et(\cH_{g/\bar k}[r], \V)$ is a $G_k$-representation of weight $2+m$. Since $\cH_{g/ k}[r]$ is a finite Galois cover of $\cH_{g/k}$, it then also follows that $H^1(\pi_1(\cH_{g/\bar k}, \bar\eta), V)$ is of weight $2+m$. Recall that there is an exact sequence
$$1\to\pi_1(\cH_{g/\bar k}[r], \bar\eta)\to \pi_1(\cH_{g/k}[r], \bar\eta)\to G_k\to1.$$
Denote the groups $\pi_1(\cH_{g/\bar k}[r], \bar\eta)$ and $\pi_1(\cH_{g/k}[r], \bar\eta)$ by $\Delta^\geom[r]$ and $\Delta^\arith[r]$, respectively. When $r=1$, we omit $r$ from the notation.  The above exact sequence gives rise to a spectral sequence
$$E^{s,t}_2=H^s(G_k, H^t(\Delta^\geom[r], V))\Rightarrow H^{s+t}(\Delta^\arith[r], V).$$
Thus there is an exact sequence
$$0\to H^1(G_k,H^0(\Delta^\geom[r],V))\to H^1(\Delta^\arith[r], V)\hspace{2in}$$
$$\hspace{1.5in}\to H^0(G_k, H^1(\Delta^\geom[r], V))\to H^2(G_k, H^0(\Delta^\geom[r], V)).$$ 
If $V$ is geometrically nontrivial, then since $\rho^\geom:\Delta^\geom[r]\to \Sp(H_\Ql)$ has a Zariski-dense image, we have $H^0(\Delta^\geom[r],V)=0$. Hence we have an isomorphism\\ $H^1(\Delta^\arith[r], V)\cong H^1(\Delta^\geom[r], V)^{G_k}$.  Thus if $|V|\not=2$, then the result follows. When $|V|=-2$, the result follows from Proposition \ref{generator iso}. \\
For $V=\Ql(s)$, since $H_1(\Delta_g,\Z)$ is torsion by \cite[Thm.~8]{BiHi}, it follows that 
$$H^1(\Delta^\geom, \Ql(s))\cong H^1(\Delta_g, \Q)\otimes\Ql(s)=0.$$ Therefore, the above exact sequence gives the isomorphism 
$$H^1(G_k, \Ql(s))\cong H^1(\Delta^\arith, \Ql(s)).$$ Thus if $s\geq 1$, then the result follows. 
\end{proof}
\begin{proposition}\label{odd weight is zero}
For $g\geq 2$, each graded quotient $\Gr^W_{-2m-1}\mathfrak{v}_g=0$ for $m\geq0$.
\end{proposition}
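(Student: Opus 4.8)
The plan is to show that the \emph{generators} of $\mathfrak{v}_g$ all carry even weight and then invoke additivity of weight under the Lie bracket. Since $\mathfrak{v}_g$ is pronilpotent and negatively weighted ($W_{-1}\mathfrak{v}_g=\mathfrak{v}_g$), the associated graded $\Gr^W_\bullet\mathfrak{v}_g$ is a negatively graded Lie algebra that is generated, as a Lie algebra, by any graded lift of its abelianization. By Theorem \ref{weighted property 1}(iii) the functor $\Gr^W_\bullet$ is exact, so it commutes with the formation of $H_1$ (a cokernel of the bracket map); hence $H_1(\Gr^W_\bullet\mathfrak{v}_g)\cong\Gr^W_\bullet H_1(\mathfrak{v}_g)$, and the problem reduces to checking that $H_1(\mathfrak{v}_g)$ is concentrated in even weights.

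Next I would read off these weights directly from Proposition \ref{even weight} (with $r=1$). Decompose the $R$-module $H_1(\mathfrak{v}_g)$ into irreducibles; each summand is either geometrically nontrivial or isomorphic to some $\Ql(s)$. Part (i) gives $\Hom_R(H_1(\mathfrak{v}_g),V)=0$ for every geometrically nontrivial irreducible $V$ of weight $m\neq-2$, so the geometrically nontrivial constituents all lie in weight $-2$. Part (ii) gives $\Hom_R(H_1(\mathfrak{v}_g),\Ql(s))=0$ unless $s\geq1$, and $\Ql(s)$ has weight $-2s$; thus the geometrically trivial constituents lie in the even weights $-2,-4,-6,\dots$. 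In all cases the weight is even, so $H_1(\mathfrak{v}_g)$ is supported in even weights.

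Finally, since $\Gr^W_\bullet\mathfrak{v}_g$ is generated by $H_1(\mathfrak{v}_g)$ and the bracket is additive on weights, every iterated bracket of the generators again has even weight; therefore no odd weight can occur and $\Gr^W_{-2m-1}\mathfrak{v}_g=0$ for all $m\geq0$. The essential input, Proposition \ref{even weight}, is already in hand, so the only point meriting care is the generation statement, and I do not expect a genuine obstacle there. In the negatively graded setting it is elementary: writing $L=\Gr^W_\bullet\mathfrak{v}_g$ and letting $L'$ be the subalgebra generated by a lift of $H_1(L)$, one has $L=L'+[L,L]$, and an induction on $|m|$ (the degree-$m$ part of $[L,L]$ being a sum of brackets $[L_{m_1},L_{m_2}]$ with $|m_1|,|m_2|<|m|$) shows $L_m\subseteq L'$ for every $m$.
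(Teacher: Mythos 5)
Your proof is correct and follows essentially the same route as the paper: the paper's own argument is precisely that Proposition \ref{even weight} forces $H_1(\mathfrak{v}_g)$ into negative even weights, and that a pronilpotent Lie algebra is generated by its $H_1$, so additivity of weight under the bracket finishes it. Your write-up simply makes explicit the details (exactness of $\Gr^W_\bullet$, the irreducible decomposition, and the induction showing generation in the negatively graded setting) that the paper leaves implicit.
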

\begin{proof} 
Proposition \ref{even weight} implies that $H_1(\mathfrak{v}_g)$ has only negative even weights. Since a pronilpotent Lie algebra $\n$ is generated by $H_1(\n)$, we are done. 
\end{proof}
The following result is an immediate consequence of Tanaka's computation \cite{tanaka}.
\begin{lemma}\label{no nichi}
If $g\geq 2$, then $\Hom_{R}(\Gr^W_{-2}\mathfrak{v}_g, V_{[1^2]})=0$.
\end{lemma}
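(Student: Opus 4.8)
The plan is to reduce $\Hom_R(\Gr^W_{-2}\mathfrak{v}_g, V_{[1^2]})$ to a twisted first cohomology group of the hyperelliptic mapping class group $\Delta_g$ and then to invoke Tanaka's computation \cite{tanaka}; the reduction is formal and all the genuine content sits in the last step.

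First I would show that $\Gr^W_{-2}\mathfrak{v}_g$ is carried entirely by the generators of $\mathfrak{v}_g$, i.e. that $\Gr^W_{-2}\mathfrak{v}_g\cong\Gr^W_{-2}H_1(\mathfrak{v}_g)$. By Proposition \ref{even weight} together with Proposition \ref{odd weight is zero}, the abelianization $H_1(\mathfrak{v}_g)$ is negatively weighted with only even weights, so $\Gr^W_{-1}\mathfrak{v}_g=0$ and the generators sit in weights $\le -2$; in particular $\mathfrak{v}_g=W_{-2}\mathfrak{v}_g$. Since the bracket satisfies $[W_{-a}\mathfrak{v}_g,W_{-b}\mathfrak{v}_g]\subseteq W_{-a-b}\mathfrak{v}_g$, the commutator ideal $[\mathfrak{v}_g,\mathfrak{v}_g]=[W_{-2}\mathfrak{v}_g,W_{-2}\mathfrak{v}_g]$ lands in $W_{-4}\mathfrak{v}_g$ and therefore does not meet weight $-2$. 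Hence the natural surjection $\mathfrak{v}_g\to H_1(\mathfrak{v}_g)$ is an isomorphism in weight $-2$, which gives the claimed identification.

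Next, the representation $V_{[1^2]}$ is irreducible of weight $-|[1^2]|=-2$ and is geometrically nontrivial. Combining Proposition \ref{generator iso} (which identifies $\Hom_R(H_1(\mathfrak{v}_g),V)$ with $\Hom_R(\Gr^W_mH_1(\mathfrak{v}_g),V)$ for $V$ of weight $m$) with the previous step and with Proposition \ref{even weight} for $r=1$, I obtain
$$\Hom_R(\Gr^W_{-2}\mathfrak{v}_g, V_{[1^2]})\cong\Hom_R(H_1(\mathfrak{v}_g),V_{[1^2]})\cong H^1(\pi_1(\cH_{g/k},\bar\eta),V_{[1^2]}).$$
Running the spectral-sequence argument used in the proof of Proposition \ref{even weight}, geometric nontriviality of $V_{[1^2]}$ forces $H^0(\Delta^\geom,V_{[1^2]})=0$, whence $H^1(\pi_1(\cH_{g/k},\bar\eta),V_{[1^2]})\cong H^1(\Delta^\geom,V_{[1^2]})^{G_k}$, where $\Delta^\geom=(\Delta_g)^\wedge$. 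By the comparison with the discrete hyperelliptic mapping class group, $H^1(\Delta^\geom,V_{[1^2]})\cong H^1(\Delta_g,(V_{[1^2]})_\Q)\otimes\Ql$, so it suffices to prove that $H^1(\Delta_g;V_{[1^2]})=0$.

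The final step is where the real input lies: Tanaka's computation \cite{tanaka} of the twisted cohomology of $\Delta_g$ shows that the irreducible summand $V_{[1^2]}$ does not occur in $H^1(\Delta_g;\,\cdot\,)$, which yields the desired vanishing and hence the lemma. The main obstacle is therefore not the reduction but matching the representation-theoretic conventions of \cite{tanaka}: one must verify that Tanaka's result (phrased via the homology of $\Delta_g$, equivalently via the image of the hyperelliptic Johnson homomorphism) genuinely excludes the primitive part $V_{[1^2]}\subset\Lambda^2H$ in degree one, rather than some Tate twist of it. Once that identification is secured, the conclusion $\Hom_R(\Gr^W_{-2}\mathfrak{v}_g,V_{[1^2]})=0$ is immediate.
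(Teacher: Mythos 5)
Your proposal is correct and follows essentially the same route as the paper's own proof: identify $\Gr^W_{-2}\mathfrak{v}_g$ with $\Gr^W_{-2}H_1(\mathfrak{v}_g)$ using that $\mathfrak{v}_g=W_{-2}\mathfrak{v}_g$ is generated by $H_1$ with no weight $-1$ part, translate via Proposition \ref{generator iso}/\ref{even weight} into $H^1(\Delta^\arith,V_{[1^2]})\cong H^1(\Delta^\geom,V_{[1^2]})^{G_k}$, and conclude by Tanaka's vanishing of $H^1(\Delta_g,V_{[1^2]})$. Your extra care about matching Tanaka's conventions is a sensible caveat but does not alter the argument.
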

\begin{proof}We use the same notation as in the proof of Proposition \ref{even weight}. Since $\mathfrak{v}_g=W_{-2}\mathfrak{v}_g$, there is an $R$-invariant isomorphism 
$$\Gr^W_{-2}\mathfrak{v}_g=\Gr^W_{-2}H_1(\mathfrak{v}_g),$$
since $\mathfrak{v}_g$ is generated by $H_1$ and $\Gr^W_{-1}H_1(\mathfrak{v}_g)=0$. Tanaka's computation \cite{tanaka} implies that $H^1(\Delta^\geom, V_{[1^2]})$ vanishes. Therefore, together with the isomorphism
$$\Hom_{R}(\Gr^W_{-2}H_1(\mathfrak{v}_g), V_{[1^2]})\cong H^1(\Delta^\arith, V_{[1^2]}),$$ the result follows.
\end{proof}
\begin{lemma}For $g\geq 2$, $\Gr^W_{-2}\mathfrak{v}_g$ contains at least one copy of $V_{[2^2]}(-1)$.
\end{lemma}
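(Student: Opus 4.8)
The plan is to convert the statement into a cohomological nonvanishing and then trace the relevant class to the boundary of the hyperelliptic locus. Since $\mathfrak{v}_g$ is generated by $H_1(\mathfrak{v}_g)$ and $\Gr^W_{-1}H_1(\mathfrak{v}_g)=0$ (so that $\mathfrak{v}_g=W_{-2}\mathfrak{v}_g$), there is an $R$-invariant identification $\Gr^W_{-2}\mathfrak{v}_g=\Gr^W_{-2}H_1(\mathfrak{v}_g)$, exactly as in the proof of Lemma \ref{no nichi}. The representation $V_{[2^2]}(-1)$ is geometrically nontrivial and has weight $-4+2=-2$, so by Proposition \ref{even weight}(i) (applied with $r=1$) the multiplicity of $V_{[2^2]}(-1)$ in $\Gr^W_{-2}\mathfrak{v}_g$ equals $\dim_{\Ql}\Hom_R(\Gr^W_{-2}\mathfrak{v}_g, V_{[2^2]}(-1))=\dim_{\Ql}H^1(\Delta^\arith, V_{[2^2]}(-1))$. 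Thus it suffices to prove $H^1(\Delta^\arith, V_{[2^2]}(-1))\neq 0$.

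First I would pass from arithmetic to geometric cohomology. Because $V_{[2^2]}(-1)$ is geometrically nontrivial and $\rho^\geom_\etabar$ has Zariski-dense image, $H^0(\Delta^\geom, V_{[2^2]}(-1))=0$, so the five-term exact sequence of $1\to\Delta^\geom\to\Delta^\arith\to G_k\to 1$ used in the proof of Proposition \ref{even weight} gives $H^1(\Delta^\arith, V_{[2^2]}(-1))\cong H^1(\Delta^\geom, V_{[2^2]}(-1))^{G_k}=\bigl(H^1(\Delta^\geom, V_{[2^2]})\otimes\Ql(-1)\bigr)^{G_k}$, the last identification holding because $\Ql(-1)$ is trivial on $\Sp(H_\Ql)$.

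Next I would determine $H^1(\Delta^\geom, V_{[2^2]})$ as a $G_k$-module. Its nonvanishing is the key external input: Tanaka's computation \cite{tanaka} (the same source giving the vanishing in Lemma \ref{no nichi}) shows $H^1(\Delta^\geom, V_{[2^2]})\neq 0$. To pin down its Galois structure I would run the Gysin sequence for $\cH_{g/\bar k}[r]\hookrightarrow \cH^c_{g/\bar k}[r]$ with coefficients in $\V_{[2^2]}$ exactly as in Proposition \ref{even weight}: simple connectivity of $\cH^c_g[0]$ \cite{BMP} together with Borel's theorem \cite{bor1,bor2} kills $H^1_\et(\cH^c_{g/\bar k}[r], \V_{[2^2]})$, so the residue map embeds $H^1_\et(\cH_{g/\bar k}[r], \V_{[2^2]})$ into $\bigoplus_i H^0_\et(D_i, \V_{[2^2]}(-1))$. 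Along each boundary divisor $D_i$ the generic fiber is a compact-type curve whose $H$ splits monodromy-invariantly as $H'\oplus H''$, and the monodromy on $D_i$ has Zariski-dense image in a product of symplectic groups; hence the invariants $(V_{[2^2]})^{\pi_1(D_i)}$ are spanned by the polarization classes $(\theta')^2,\ \theta'\theta'',\ (\theta'')^2$ and are of Tate type $\Ql(2)^{\oplus d_i}$. Consequently $H^0_\et(D_i, \V_{[2^2]}(-1))\cong\Ql(1)^{\oplus d_i}$, and $H^1(\Delta^\geom, V_{[2^2]})$, being a $G_k$-submodule of their sum, is isomorphic to $\Ql(1)^{\oplus d}$ for some $d\geq 1$ (with $d\geq 1$ supplied by Tanaka). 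Combining these, $H^1(\Delta^\arith, V_{[2^2]}(-1))\cong\bigl(\Ql(1)^{\oplus d}\otimes\Ql(-1)\bigr)^{G_k}=\bigl(\Ql^{\oplus d}\bigr)^{G_k}=\Ql^{\oplus d}\neq 0$, which yields the claim.

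The main obstacle is the middle step: establishing both that $H^1(\Delta^\geom, V_{[2^2]})$ is nonzero (for which I rely on Tanaka's explicit calculation) and that it is of pure Tate type $\Ql(1)^{\oplus d}$, since weight $-2$ alone does not force this—one genuinely needs the residue map into the boundary together with the identification of the boundary monodromy invariants with products of the two polarizations. A secondary point to verify is that the Zariski density of the boundary monodromy in the relevant product of symplectic groups is strong enough to compute these invariants via the first fundamental theorem for $\Sp$, and that the level-$r$ computation descends to $r=1$ as in Proposition \ref{even weight}.
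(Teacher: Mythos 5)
Your reduction of the lemma to the nonvanishing of $H^1(\Delta^\arith, V_{[2^2]}(-1))$ is correct: by Proposition \ref{generator iso} (equivalently the $m=-2$ case of Proposition \ref{even weight}) and the identification $\Gr^W_{-2}\mathfrak{v}_g=\Gr^W_{-2}H_1(\mathfrak{v}_g)$, the multiplicity of $V_{[2^2]}(-1)$ is exactly $\dim H^1(\Delta^\arith, V_{[2^2]}(-1))$. But this reduction is essentially a restatement of the lemma, and the entire content is then concentrated in the step you yourself flag as ``the key external input'': the claim that Tanaka's computation gives $H^1(\Delta^\geom, V_{[2^2]})\neq 0$. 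That attribution is not supported. Tanaka's paper is cited in this article only for the \emph{vanishing} of $H^1(\Delta^\geom, V_{[1^2]})$ (Lemma \ref{no nichi}); his computation concerns low-weight coefficient modules and there is no indication it covers the weight $-4$ representation $V_{[2^2]}$, which is the highest-weight constituent of $\mathrm{Sym}^2\Lambda^2H$. Without an actual source or argument for this nonvanishing, your proof proves nothing: the statement you defer to is, by your own first paragraph, equivalent to the lemma. This is precisely why the paper does not argue cohomologically here but instead proves the nonvanishing directly: the hyperelliptic Johnson homomorphism sends the separating Dehn twist $\omega_1$ to $\tfrac12\phi((\theta_1'')^2)\in\Der_2\p=V_{[2^2]}+V_{[1^2]}$, and the computation in Theorem \ref{no zero outer part} shows its $V_{[2^2]}$-component $\xi_{[2^2]}$ is nonzero (its bracket with $\tilde\xi_{[2^2]}$ is a nonzero inner derivation in $\Der_4\p$); hence the composite $T\Delta_g\to H_1(\mathfrak{v}_{g,1})\to\Out\Der_2\p=V_{[2^2]}(-1)$ is nontrivial, it factors through $H_1(\mathfrak{v}_g)$, and Schur's lemma gives the copy of $V_{[2^2]}(-1)$.

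A secondary, lesser issue: even granting a nonvanishing geometric $H^1$, you need nontrivial $G_k$-invariants after the Tate twist, and your argument for the pure Tate structure (residue map into the boundary of $\cH^c_{g/\bar k}[r]$, Zariski density of the boundary monodromy in $\Sp(H')\times\Sp(H'')$, and the first fundamental theorem identifying $(V_{[2^2]})^{\Sp(H')\times\Sp(H'')}$ with polynomials in the two polarizations) is plausible but entirely unverified for the hyperelliptic boundary strata, whose components and monodromy are more delicate than those of $\overline{\M}_g$. If you want to salvage this route, you must either locate a genuine reference for $H^1(\Delta^\geom, V_{[2^2]})\neq 0$ or prove it---and the natural proof is exactly the paper's Dehn twist computation, at which point the cohomological detour is unnecessary.
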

\begin{proof}

There is a diagram of $\D_{g,1}$-modules: $$\xymatrix{
0\ar[r]&\p\ar[d]\ar[r]&\mathfrak{v}_{g,1}\ar[d]\ar[r]&\mathfrak{v}_g\ar[d]\ar[r]&0\\
0\ar[r]&\Inn\p\ar[r]& W_{-1}\Der\p\ar[r]&W_{-1}\Out\Der\p\ar[r]&0
}
$$
The Lie algebra $\p$ admits a natural weight filtration and so does $\Der\p$. That $H_1(\p) $ is pure of weight $-1$ and the strictness of morphisms in the category of $\D_{g,1}$-modules imply  that this natural weight filtration coincides with its lower central series. Thus we have $\Gr^W_{-k}\p=\p(k)$ and $\Gr^W_{-k}\Der\p=\Der_k\p$.  
By the construction of relative and weighted completions, the hyperelliptic Johnson homomorphism $\tau^\hyp: T\Delta_g\to \Der_{2}\p$ factors through the $R$-invariant map $\Gr^W_{-2}H_1(\mathfrak{v}_{g,1})\to \Der_{2}\p$ that is induced by the adjoint action of $\mathfrak{v}_{g,1}$ on $\p$.  The proof of Theorem \ref{no zero outer part} shows that the composition $T\Delta_g\to H_1(\mathfrak{v}_{g,1})\to \Der_{2}\p\to\Out\Der_{2}\p$ is nontrivial.  Therefore the map $H_1(\mathfrak{v}_g)\to \Out\Der_2\p=V_{[2^2]}(-1)$ is nontrivial and hence the $R$-module $\Gr^W_{-2}\mathfrak{v}_g=\Gr^W_{-2}H_1(\mathfrak{v}_g)$ contains at least one copy of $V_{[2^2]}(-1)$ by Schur's lemma.
\end{proof}

\section{The Lie algebra $\mathfrak{h}_{g,n}$ }

 From Proposition \ref{comm diag}, we see that the projection $\cC_{\cH_{g/k}}^n\to \cH_{g/k}$ induces 
the exact sequence of $\widehat{\D}_{g,n}$-modules
$$0\to \p^n\to \widehat{\mathfrak{v}}_{g,n}\to\mathfrak{v}_g\to 0.$$
 Since $\Gr^W_\bullet$ is an exact functor, we have
\begin{proposition}
For $g\geq 2$ and $n\geq 0$, we have exact sequences of $R$-modules
\begin{align*}
0\to &(\Gr^W_{-1}\p)^n\to  \Gr^W_{-1}\widehat{\mathfrak{v}}_{g,n}\to 0;\\
0\to &(\Gr^W_{-2}\p)^n\to \Gr^W_{-2}\widehat{\mathfrak{v}}_{g,n}\to\Gr^W_{-2}\mathfrak{v}_g\to 0;\\ 
0\to &(\Gr^W_{-3}\p)^n\to \Gr^W_{-3}\widehat{\mathfrak{v}}_{g,n}\to 0;\\
0\to&(\Gr^W_{-4}\p)^n\to \Gr^W_{-4}\widehat{\mathfrak{v}}_{g,n}\to \Gr^W_{-4}\mathfrak{v}_g\to 0.
\end{align*}
\qed
\end{proposition}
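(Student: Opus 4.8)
The plan is to apply the functor $\Gr^W_\bullet$ to the short exact sequence of $\widehat{\D}_{g,n}$-modules
$$0\to \p^n\to \widehat{\mathfrak{v}}_{g,n}\to\mathfrak{v}_g\to 0$$
recorded just above (obtained from Proposition \ref{comm diag}), and then to read off the four graded pieces in degrees $-1,\ldots,-4$.

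First I would invoke part (iii) of Theorem \ref{weighted property 1}, which states that $\Gr^W_\bullet$ is exact on the category of $\cG$-modules. Applying it to the displayed sequence produces, for each $m\geq 1$, a short exact sequence
$$0\to \Gr^W_{-m}(\p^n)\to \Gr^W_{-m}\widehat{\mathfrak{v}}_{g,n}\to \Gr^W_{-m}\mathfrak{v}_g\to 0.$$
By part (ii) of the same theorem every graded quotient is an $R$-module and the induced maps are $R$-equivariant, so these are exact sequences of $R$-modules. Since $\Gr^W_{-m}$ commutes with finite direct sums and the weight filtration on $\p^n$ is the product filtration, we have $\Gr^W_{-m}(\p^n)\cong (\Gr^W_{-m}\p)^n$, which puts the left-hand terms in the asserted form.

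It remains to treat the right-hand terms in the four degrees. For $m=2$ and $m=4$ nothing further is needed; the quotient $\Gr^W_{-m}\mathfrak{v}_g$ appears as stated. For the odd degrees $m=1$ and $m=3$, Proposition \ref{odd weight is zero} gives $\Gr^W_{-1}\mathfrak{v}_g=\Gr^W_{-3}\mathfrak{v}_g=0$, so those two sequences collapse to
$$0\to (\Gr^W_{-m}\p)^n\to \Gr^W_{-m}\widehat{\mathfrak{v}}_{g,n}\to 0,$$
as claimed.

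There is essentially no obstacle here: the content is pure bookkeeping once the exactness of $\Gr^W_\bullet$ and the vanishing of the odd-weight graded quotients of $\mathfrak{v}_g$ are granted. The only point worth a moment's attention is the compatibility $\Gr^W_{-m}(\p^n)\cong (\Gr^W_{-m}\p)^n$, which is immediate because the inclusion $\p^n\hookrightarrow \widehat{\mathfrak{v}}_{g,n}$ is a morphism of $\widehat{\D}_{g,n}$-modules and $\Gr^W_\bullet$ is strict.
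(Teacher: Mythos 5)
Your proposal is correct and follows exactly the paper's own (essentially one-line) argument: apply the exact functor $\Gr^W_\bullet$ to the sequence $0\to \p^n\to \widehat{\mathfrak{v}}_{g,n}\to\mathfrak{v}_g\to 0$ and use Proposition \ref{odd weight is zero} to kill the odd-weight quotients of $\mathfrak{v}_g$. Nothing further is needed.
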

Assume that $g\geq 2$ and $n\geq 0$. We define the Lie algebra $\h_{g,n}$ to be 
$$\h_{g,n}=\Gr^W_\bullet\left(\widehat{\mathfrak{v}}_{g,n}/W_{-5}\widehat{\mathfrak{v}}_{g,n}\right).$$
Denote the weight $k$ component of $\h_{g,n}$ by $\h_{g,n}(k)$. We have $$\h_{g,n}(k)=\Gr^W_{k}\left(\widehat{\mathfrak{v}}_{g,n}/W_{-5}\widehat{\mathfrak{v}}_{g,n}\right).$$
\begin{proposition}\label{induced sections}Suppose that $g\geq 2$. Each section of the universal hyperelliptic curve $\cC_{\cH_{g,n/k}}\to \cH_{g,n/k}$ induces a well-defined $R$-invariant Lie algebra section of $\beta_n:\h_{g,n+1}\to\h_{g,n}.$
\end{proposition}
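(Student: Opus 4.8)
The plan is to convert a section of the universal curve into a section of the projection $\pi\colon\cC^{n+1}_{\cH_{g/k}}\to\cC^{n}_{\cH_{g/k}}$ on fundamental groups, and then to transport it through weighted completion and the associated graded. Recall that $\cC_{\cH_{g,n/k}}$ is the pullback $\cH_{g,n/k}\times_{\cC^n_{\cH_{g/k}}}\cC^{n+1}_{\cH_{g/k}}$ of $\pi$ along the open immersion $\iota\colon\cH_{g,n/k}\hookrightarrow\cC^n_{\cH_{g/k}}$, whose complement is the diagonal locus $\Delta=\bigcup_{i<j}\Delta_{ij}$. Hence a section $s$ of $\cC_{\cH_{g,n/k}}\to\cH_{g,n/k}$, composed with the second projection, yields a morphism $\tilde{s}\colon\cH_{g,n/k}\to\cC^{n+1}_{\cH_{g/k}}$ with $\pi\circ\tilde{s}=\iota$. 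On fundamental groups this gives $\tilde{s}_\ast\colon\pi_1(\cH_{g,n/k})\to\pi_1(\cC^{n+1}_{\cH_{g/k}})$ satisfying $\pi_\ast\circ\tilde{s}_\ast=\iota_\ast$, where $\iota_\ast$ is surjective because $\iota$ is an open immersion.

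The crucial step is to show that $\tilde{s}_\ast$ kills $\ker\iota_\ast$. Via the comparison theorem we may argue with geometric fundamental groups, where $\ker\iota_\ast$ is normally generated by the meridians of the components $\Delta_{ij}$. Let $\gamma$ be such a meridian, a small loop encircling $\Delta_{ij}$ transversally. As $\Delta_{ij}$ is a smooth divisor, $\iota_\ast(\gamma)$ bounds a meridian disk and is trivial in $\pi_1(\cC^n_{\cH_{g/k}})$; therefore $\tilde{s}_\ast(\gamma)$ lies in $\ker\pi_\ast$, i.e.\ in the image of $\pi_1(C_\etabar)$. On the other hand $\gamma$ may be chosen arbitrarily small, and since $s$ is continuous the loop $\tilde{s}\circ\gamma$ meets each fiber in an arbitrarily small, hence contractible, loop; thus its class in $\pi_1(C_\etabar)$ is trivial as well, and $\tilde{s}_\ast(\gamma)=1$. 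Because the conjugates of the meridians normally generate $\ker\iota_\ast$, we obtain $\ker\iota_\ast\subseteq\ker\tilde{s}_\ast$. The surjection $\iota_\ast$ then factors $\tilde{s}_\ast=\sigma_\ast\circ\iota_\ast$ through a unique homomorphism $\sigma_\ast\colon\pi_1(\cC^n_{\cH_{g/k}})\to\pi_1(\cC^{n+1}_{\cH_{g/k}})$, and $\pi_\ast\circ\sigma_\ast\circ\iota_\ast=\iota_\ast$ together with the surjectivity of $\iota_\ast$ forces $\pi_\ast\circ\sigma_\ast=\id$. Thus $\sigma_\ast$ is a section of $\pi_\ast$.

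It then remains to pass to weighted completions. The monodromy representations of $\pi_1(\cC^n_{\cH_{g/k}})$ and $\pi_1(\cC^{n+1}_{\cH_{g/k}})$ both land in $R=\GSp(H_\Ql)$ and are intertwined by $\pi_\ast$ and $\sigma_\ast$, so by functoriality $\sigma_\ast$ induces a section of $\widehat{\D}_{g,n+1}\to\widehat{\D}_{g,n}$ over $R$, and hence a section of the map $\widehat{\mathfrak{v}}_{g,n+1}\to\widehat{\mathfrak{v}}_{g,n}$ on pronilpotent radicals. Since morphisms of weighted completions are strict for the weight filtrations, applying $\Gr^W_\bullet$ and reducing modulo $W_{-5}$ produces a graded Lie algebra section $\sigma$ of $\beta_n\colon\h_{g,n+1}\to\h_{g,n}$. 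This $\sigma$ is $R$-equivariant because each $\Gr^W_m$ is an $R$-module (Theorem \ref{weighted property 1}) and $\sigma$ commutes with the projections to $R$. For well-definedness, note that different choices of base points and connecting paths change $\tilde{s}_\ast$, and thus $\sigma_\ast$, only by conjugation by the prounipotent radical; as this inner action is trivial on each $\Gr^W_m$ by Theorem \ref{weighted property 1}, the resulting section $\sigma$ of $\beta_n$ does not depend on these choices.

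The main obstacle I anticipate is the vanishing $\tilde{s}_\ast(\gamma)=1$ on meridians. One must verify carefully, using the comparison with the topological fundamental group and the openness of $\iota$, both that $\iota_\ast(\gamma)$ dies in $\pi_1(\cC^n_{\cH_{g/k}})$ and that the residual fiber class in $\pi_1(C_\etabar)$ vanishes --- the latter because a section evaluated around a meridian that bounds in the total space traces a null-homotopic loop in the fiber. Once this is in place, everything else is formal functoriality of weighted completion together with the strictness and the triviality of the inner action on the associated graded.
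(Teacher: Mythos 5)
The crux of your argument is the claim that $\tilde s_\ast$ kills the meridians of the diagonal divisors, and the justification you give does not work. You argue that because $\gamma$ can be chosen arbitrarily small and $s$ is continuous, the fiber component of $\tilde s\circ\gamma$ is an arbitrarily small, hence contractible, loop. But $s$ is only defined on $\cH_{g,n/k}=\cC^n_{\cH_g/k}-\Delta$; it is not (a priori) defined at the center of the meridian, so shrinking $\gamma$ toward a point of $\Delta_{ij}$ gives no control on the image $\tilde s(\gamma)$: a continuous map $D-\{p\}\to C$ can perfectly well send the generator of $\pi_1(D-\{p\})\cong\Z$ to a nontrivial loop in $C$, and all the shrinking meridians have the same image class. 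What you would actually need is that the section extends across the generic points of $\Delta_{ij}$ --- which does hold, by the valuative criterion of properness applied to the proper family $\cC^{n+1}_{\cH_g/k}\to\cC^n_{\cH_g/k}$ over a smooth base, giving an extension away from codimension two and hence no change in $\pi_1$ --- but that is a different argument from the one you wrote, and without it the key step is a genuine gap.

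The paper sidesteps the group-level descent entirely and only descends the \emph{graded Lie algebra} section: by the universal property of weighted completion, $s$ gives an $R$-invariant section $\Gr^W_\bullet ds_\ast$ of $\Gr^W_\bullet\mathfrak{v}^\cC_{g,n}\to\Gr^W_\bullet\mathfrak{v}_{g,n}$; the kernel $\mathfrak{k}$ of $\mathfrak{v}_{g,n}\to\widehat{\mathfrak{v}}_{g,n}$ is, by \cite[Thm.~12.6]{hain0}, generated as a Lie ideal by $\Gr^W_{-2}\mathfrak{k}\cong\bigoplus_{i<j}\Ql(1)$, and since $\Gr^W_{-2}\p\cong V_{[1^2]}$ contains no copy of $\Ql(1)$ (Proposition \ref{presentation}), Schur's lemma forces the restriction of $\Gr^W_\bullet ds_\ast$ to $\Gr^W_\bullet\mathfrak{k}$ to vanish, so the section descends to $\Gr^W_\bullet\widehat{\mathfrak{v}}_{g,n+1}\to\Gr^W_\bullet\widehat{\mathfrak{v}}_{g,n}$. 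This representation-theoretic vanishing is exactly what replaces your topological claim, and it is all the proposition needs. Your remaining steps (functoriality of weighted completion, exactness of $\Gr^W_\bullet$, $R$-equivariance, and independence of choices via the triviality of the $\U$-action on $\Gr^W_m$) are fine.
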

\begin{proof}
The fiber product diagram
$$\xymatrix{
\cC_{\cH_{g,n}/k}\ar[r]\ar[d]&\cC_{\cH_g/k}^{n+1}\ar[d]\\
\cH_{g,n/k}\ar[r]          &\cC_{\cH_g/k}^n
}
$$ induces the following commutative diagram of $\D^\cC_{g,n}$-modules
$$\xymatrix{
0\ar[r]&\p\ar[r]\ar@{=}[d]&\widehat{\mathfrak{v}}_{g,n+1}\ar[r]&\widehat{\mathfrak{v}}_{g,n}\ar[r]&0\\
0\ar[r]&\p\ar[r]&\mathfrak{v}^\cC_{g,n}\ar[r]\ar[u]&\mathfrak{v}_{g,n}\ar[u]\ar[r]&0,
}
$$ where the rows are exact and the middle and right vertical maps are surjective. By the universal property of weighted completion, each section $s$ of $\cC_{\cH_{g,n}/k}\to \cH_{g,n/k}$ induces an $R$-invariant graded Lie algebra section $\Gr^W_\bullet ds_{\ast}$ of $\Gr^W_\bullet\mathfrak{v}^\cC_{g,n}\to \Gr^W_\bullet\mathfrak{v}_{g,n}$. Hence it induces a section $\Gr^W_\bullet ds_{\ast}/W_{-5}$ of
$ \Gr^W_\bullet(\mathfrak{v}^\cC_{g,n}/W_{-5})\to \Gr^W_\bullet(\mathfrak{v}_{g,n}/W_{-5}).$
 Denote the kernel of $\mathfrak{v}_{g,n}\to \widehat{\mathfrak{v}}_{g,n}$ by $\mathfrak{k}$.
 It follows from the commutative diagram in Proposition \ref{comm diag} that $\mathfrak{k}$ is equal to the kernel of canonical surjection $\p_{g,n}\to \p^n:=\bigoplus_{j=1}^n\p_j$. We observe that the restriction of $\Gr^W_\bullet ds_{\ast}$ to $\Gr^W_\bullet \mathfrak{k}$ maps into $\Gr^W_\bullet\p$. We claim that this map is trivial. It follows from \cite[Thm.~12.6]{hain0} that $\Gr^W_\bullet\mathfrak{k}$ is generated as a Lie ideal in $\Gr^W_\bullet\p_{g,n}$ by $\ker(\Gr^W_{-2}\p_{g,n}\to \Gr^W_{-2}\p^n)=\Gr^W_{-2}\mathfrak{k}=\bigoplus_{1\leq i<j\leq n}\Ql(1)$. By Proposition \ref{presentation}, the restriction $\Gr^W_{-2}ds_{\ast}$ to $\Gr^W_{-2}\mathfrak{k}$ is trivial, and hence the restriction of $\Gr^W_\bullet ds_{\ast}$ to $\Gr^W_\bullet\mathfrak{k}$ is trivial. 
 Therefore, the Lie algebra section $\Gr^W_\bullet ds_{\ast}$ of  $\Gr^W_\bullet\mathfrak{v}^\cC_{g,n}\to \Gr^W_\bullet\mathfrak{v}_{g,n}$ descends to a Lie algebra section of $\Gr^W_\bullet\widehat{\mathfrak{v}}_{g,n+1}\to \Gr^W_\bullet\widehat{\mathfrak{v}}_{g,n}$. In particular, it induces a Lie algebra section of $\beta_n:\h_{g,n+1}\to\h_{g,n}$.

\end{proof}

Since each $\Gr^W_{-m}\widehat{\mathfrak{v}}_{g,n}$ is the inverse limit of finite-dimensional $R$-modules, we may consider $\Gr^W_{-m}\mathfrak{v}_g$ as an $R$-submodule of $\Gr^W_{-m}\widehat{\mathfrak{v}}_{g,n}$ by choosing a continuous $R$-module section. More precisely, we fix a splitting as follows. The Lie algebra $\p^n$ injects into $W_{-1}\Der\p^n$ by the adjoint action, since the center of $\cP$ is free. For each $r\in\Gr^W_{-m}\widehat{\mathfrak{v}}_{g,n}$, we have $\ad(r)=\mathrm{inn}(r)+\mathrm{out}(r)\in \Inn\Der_m\p^n\oplus \Out\Der_m\p^n$. Then there exists a unique vector $r_\p\in \Gr^W_{-m}\p^n$ such that $\ad(r_\p)=\mathrm{inn}(r)$. This gives a splitting $\Gr^W_{-m}\widehat{\mathfrak{v}}_{g,n}\to \Gr^W_{-m}\p^n$ taking $r\mapsto r_\p$. In this way, for each $m\geq 1$, we fix a splitting
$$\Gr^W_{-m}\widehat{\mathfrak{v}}_{g,n}= \Gr^W_{-m}\p^n\oplus \Gr^W_{-m}\mathfrak{v}_g.$$

Applying Theorem \ref{no zero outer part}, we show that the bracket of $\Gr^W_{-2}\mathfrak{v}_g$ with itself in $\Gr^W_\bullet\widehat{\mathfrak{v}}_{g,n}$ maps nontrivially into $\Gr^W_{-4}\p^n$. This nontrivial bracket gives an obstruction for an $R$-invariant section of each graded quotient to be an $R$-invariant Lie algebra section. 
\begin{proposition}\label{nonzero inner bracket} If $g\geq 3$ and $n\geq 1$, then there are nonzero elements $\delta_n, \tilde{\delta}_n\in\Gr^W_{-2}\mathfrak{v}_g\subset \Gr^W_{-2}\widehat{\mathfrak{v}}_{g,n}$ such that the bracket $[\delta_n, \tilde{\delta}_n]$ is nonzero and  maps diagonally into $(\Gr^W_{-4}\p)^n\subset\Gr^W_{-4}\widehat{\mathfrak{v}}_{g,n}$.
\end{proposition}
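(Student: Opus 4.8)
The plan is to transport the nonvanishing bracket of Theorem \ref{no zero outer part} into $\widehat{\mathfrak{v}}_{g,n}$ by exploiting the fact that the monodromy action of $\mathfrak{v}_g$ on the fibre Lie algebra $\p^n=\bigoplus_{j=1}^n\p_j$ is diagonal. First I would choose the elements $\delta_n,\tilde\delta_n$. By the lemma preceding the statement, the outer-derivation map $\Gr^W_{-2}\mathfrak{v}_g\to\Out\Der_2\p\cong V_{[2^2]}(-1)$ is nonzero; since $V_{[2^2]}$ is irreducible and $\Sp(H)$-representations are semisimple, it restricts to an isomorphism on a copy of $V_{[2^2]}(-1)$ inside $\Gr^W_{-2}\mathfrak{v}_g$. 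Inside this copy I take $\delta_n$ and $\tilde\delta_n$ to be the nonzero vectors mapping to the vectors $\xi_{[2^2]}$ and $\tilde{\xi}_{[2^2]}$ of Theorem \ref{no zero outer part}, and regard them in $\Gr^W_{-2}\widehat{\mathfrak{v}}_{g,n}$ through the fixed splitting $\Gr^W_{-2}\widehat{\mathfrak{v}}_{g,n}=\Gr^W_{-2}\p^n\oplus\Gr^W_{-2}\mathfrak{v}_g$ (here $n\geq1$ guarantees $\p^n\neq0$).

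The structural input is Proposition \ref{comm diag}: the projection $\cC^n_{\cH_g/k}\to\cH_{g/k}$ realises the action of $\mathfrak{v}_g$ on $\p^n$ through the diagonal map $\d_g\to(\d_g)^n$, so that $\ad(\delta_n)$ preserves each summand $\p_j$ and acts there by the image of $\delta_n$ under $\mathfrak{v}_g\to\Der\p$. Because $\delta_n$ lies in the $\Gr^W_{-2}\mathfrak{v}_g$-summand, its inner part $\mathrm{inn}(\ad(\delta_n))$ vanishes by the very definition of the splitting, so on each factor the derivation is the canonical $V_{[2^2]}$-lift, i.e. it is exactly $\xi_{[2^2]}\in V_{[2^2]}\subset\Der_2\p$; likewise $\ad(\tilde\delta_n)$ acts as $\tilde{\xi}_{[2^2]}$ on every factor. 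Since $\ad$ is a Lie homomorphism and block-diagonal derivations bracket blockwise, $\ad([\delta_n,\tilde\delta_n])$ acts on each $\p_j$ as $[\xi_{[2^2]},\tilde{\xi}_{[2^2]}]$. By Theorem \ref{no zero outer part} this equals $\ad(z)$ for the nonzero vector $z=-\frac{9}{(g+1)^2}\big[[a_1,b_1],[a_g,b_g]\big]\in\p(4)=\Gr^W_{-4}\p$. Hence $\ad([\delta_n,\tilde\delta_n])$ is the inner derivation of $\p^n$ attached to the diagonal vector $(z,\dots,z)$, and by the defining property of the splitting the $\Gr^W_{-4}\p^n$-component of $[\delta_n,\tilde\delta_n]$ is precisely $(z,\dots,z)$, which is nonzero and diagonal. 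In particular $[\delta_n,\tilde\delta_n]\neq0$ and maps diagonally into $(\Gr^W_{-4}\p)^n$, as required.

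I expect the main obstacle to be the bookkeeping in the middle step: one must verify carefully that the derivation induced on each factor $\p_j$ by an element of the $\Gr^W_{-2}\mathfrak{v}_g$-summand is genuinely the outer lift $\xi_{[2^2]}$, with no inner contribution, so that the blockwise bracket can be identified with $[\xi_{[2^2]},\tilde{\xi}_{[2^2]}]$ and Theorem \ref{no zero outer part} applied verbatim. This hinges on matching the splitting defined through $\ad=\mathrm{inn}+\mathrm{out}$ with the diagonal structure of Proposition \ref{comm diag}, together with the harmless passage between $\Sp(H)$- and $\GSp(H_\Ql)$-bookkeeping of the Tate twist on $V_{[2^2]}$. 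Once this identification is in place the argument is formal, the only quantitative ingredient being the nonvanishing of $z$ in $\p(4)$ already supplied by Theorem \ref{no zero outer part}.
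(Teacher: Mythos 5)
Your argument correctly identifies the $\p^n$\emph{-component} of the bracket: since $\delta_n,\tilde\delta_n$ lie in the complement of $\Gr^W_{-2}\p^n$ defined by $\ad=\mathrm{inn}+\mathrm{out}$, their adjoint actions on each $\p_j$ are the purely outer lifts $\xi_{[2^2]},\tilde\xi_{[2^2]}$, and Theorem \ref{no zero outer part} then makes the inner part of $\ad([\delta_n,\tilde\delta_n])$ the nonzero diagonal vector you describe. But there is a genuine gap: you never show that $[\delta_n,\tilde\delta_n]$ actually \emph{lies in} $(\Gr^W_{-4}\p)^n$, i.e.\ that its component in $\Gr^W_{-4}\mathfrak{v}_g$ vanishes. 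The splitting $\Gr^W_{-m}\widehat{\mathfrak{v}}_{g,n}=\Gr^W_{-m}\p^n\oplus\Gr^W_{-m}\mathfrak{v}_g$ is only a splitting of $R$-modules, not of Lie algebras (indeed the whole point of the proposition is that the complement is not closed under bracket on the $\p^n$ side), and the $\mathfrak{v}_g$-component of $[\delta_n,\tilde\delta_n]$ is exactly the bracket of the images of $\delta_n,\tilde\delta_n$ computed in $\Gr^W_{-4}\mathfrak{v}_g$, via the Lie algebra surjection $\widehat{\mathfrak{v}}_{g,n}\to\mathfrak{v}_g$. For arbitrary preimages of $\xi_{[2^2]},\tilde\xi_{[2^2]}$ under $\Gr^W_{-2}\mathfrak{v}_g\to\Out\Der_2\p$ — which need not even be unique, since the paper only shows $\Gr^W_{-2}\mathfrak{v}_g$ contains \emph{at least} one copy of $V_{[2^2]}(-1)$ — there is no reason for this bracket in $\Gr^W_{-4}\mathfrak{v}_g$ to vanish. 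This matters for the application: in the proof of Theorem \ref{sections of lie algebras} the bracket is evaluated as an element of $V^n\subset\bigoplus_j\Gr^W_{-4}\p_j$, with no residual $\mathfrak{v}_g$-term.

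The paper closes this gap by making a very specific choice: $\delta_n,\tilde\delta_n$ are (the diagonal images of) the $\mathfrak{v}_g$-components of $\Gr^W_{-2}\omega$ and $\Gr^W_{-2}\tilde\omega$, where $\omega,\tilde\omega$ are the images in $\mathfrak{v}_{g,1}$ of the \emph{commuting} Dehn twists $\omega_1,\omega_{g-1}$. The relation $[\omega,\tilde\omega]=0$, combined with $\Gr^W_{-1}\omega=\Gr^W_{-1}\tilde\omega=0$ and the fact that $\p$ is an ideal, forces $[\delta_{\mathfrak{v}},\tilde\delta_{\mathfrak{v}}]=-[\delta_\p,\tilde\delta_\p]-[\delta_\p,\tilde\delta_{\mathfrak{v}}]-[\delta_{\mathfrak{v}},\tilde\delta_\p]$ to lie in $\Gr^W_{-4}\p$; only then does your computation of the inner part (equivalently, Theorem \ref{no zero outer part}) finish the job. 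To repair your proof you would need to import this topological input — or some substitute proving that your chosen preimages commute modulo $\p^n$ — rather than taking arbitrary preimages.
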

\begin{proof}
Recall notation from \ref{The outer action of a commuting pair of Dehn twists}. Notice that the hyperelliptic Torelli group $T\Delta_g$ maps to $\cV_{g,(1)}^\geom$. Composing with the logarithm map $
\log:\cV_{g,(1)}^\geom\to\mathfrak{v}^\geom_{g,(1)}$, $T\Delta_g$ maps to the pronilpotent Lie algebra $\mathfrak{v}_{g,(1)}^\geom$. The adjoint action of $\mathfrak{v}_{g,(1)}^\geom$ on $\p$ factors through the adjoint action of $\mathfrak{v}_{g,1}$ on $\p$. Denote the image of the commuting Dehn twists $\omega_1$ and $\omega_{g-1}$ in $\mathfrak{v}_{g,1}$ by $\omega$ and $\tilde{\omega}$, respectively. The fact that the Johnson homomorphism $T\Delta_g\to \Der_1\p$ is trivial implies that $\Gr^W_{-1}\omega=\Gr^W_{-1}\tilde{\omega}=0$. Since $\Gr^W_{-2}\mathfrak{v}_{g,1}=\Gr^W_{-2}\p\oplus\Gr^W_{-2}\mathfrak{v}_g$, we can express 
$$\Gr^W_{-2}\omega=\delta_{\p}+\delta_{\mathfrak{v}},\text{ and }\Gr^W_{-2}\tilde{\omega}=\tilde{\delta}_{\p}+\tilde{\delta}_{\mathfrak{v}}.$$ Since $\Gr^W_{-2}\mathfrak{v}_g$ does not contain a copy of $V_{[1^2]}$ by Lemma \ref{no nichi}, we can see that the elements $\delta_{\mathfrak{v}}$ and $\tilde{\delta}_{\mathfrak{v}}$ map to $\xi_{[2^2]}$ and $\tilde{\xi}_{[2^2]}$, respectively (recall that $\xi_{[2^2]}$ and $\tilde{\xi}_{[2^2]}$ are defined in \ref{The outer action of a commuting pair of Dehn twists}). Since the Dehn twists $\omega$ and $\omega_{g-1}$ commutes, the bracket $[\omega, \tilde{\omega}]$ in the Lie algebra $\mathfrak{v}_{g,1}$ is trivial. Since  $\Gr^W_{-1}\omega=\Gr^W_{-1}\tilde{\omega}=0$, we have $$\Gr^W_{-4}[\omega, \tilde{\omega}]=[\delta_{\p}+\delta_{\mathfrak{v}},\tilde{\delta}_{\p}+\tilde{\delta}_{\mathfrak{v}}]=0.$$
Therefore, the bracket $[\delta_{\mathfrak{v}}, \tilde{\delta}_{\mathfrak{v}}]$ lies in $\Gr^W_{-4}\p$. By Theorem \ref{no zero outer part}, the bracket $[\xi_{[2^2]},\tilde{\xi}_{[2^2]}]$ is nontrivial, and so is the bracket $[\delta_{\mathfrak{v}}, \tilde{\delta}_{\mathfrak{v}}]$ in $\Gr^W_{-4}\p$. This completes the case for $n=1$. For $n>1$, the diagonal section $\cC_{\cH,g}^1\to \cC_{\cH,g/k}^n$ induces a graded section $\Gr^W_\bullet d\Delta$ of the projection $\Gr^W_\bullet \widehat{\mathfrak{v}}_{g,n}\to \Gr^W_\bullet\widehat{\mathfrak{v}}_{g,1}$. Note that the restriction of $\Gr^W_{-2} d\Delta$ to $\Gr^W_{-2}\mathfrak{v}_g$ is the identity. Let $\delta_n=(\Gr^W_{-2}d\Delta)(\delta_{\mathfrak{v}})$ and $\tilde{\delta}_n=(\Gr^W_{-2}d\Delta)(\tilde{\delta}_{\mathfrak{v}})$. Then the bracket $[\delta_n,\tilde{\delta}_n]$ maps to $(\Gr^W_{-4}\p)^n$ diagonally, since $\Gr^W_\bullet d\Delta$ is a graded Lie algebra section. 
The bracket $[\delta_n,\tilde{\delta}_n]$ maps to $[\delta_{\mathfrak{v}}, \tilde{\delta}_{\mathfrak{v}}]$, which is nonzero, and so is $[\delta_n,\tilde{\delta}_n]$. 
\end{proof}
\subsection{The sections of $\beta_n:\h_{g,n+1}\to\h_{g,n}$}
In this section, we will compute $R$-invariant Lie algebra sections of $\beta_n$ that are induced by the sections of the universal hyperelliptic curve $\cC_{\cH_{g,n}/k}\to \cH_{g,n/k}$. We call them {\it geometric} sections of $\beta_n$.
 Remember that $\p^{n+1}=\oplus_{j=0}^n\p_j\subset\widehat{\mathfrak{v}}_{g,n+1}$, where the $j$th component $\p_j$ corresponds to the projection of $\cC^{n+1}_{\cH_{g/k}}$ onto its $j$th component. We observe that, by Schur's lemma, each section $\zeta$ of $\beta_n$ on $\Gr^W_{-1}$ has the form
$$(\Gr^W_{-1}\zeta)(u_1,\ldots, u_n)=(\sum_{j=1}^na_ju_j; u_1,\ldots,u_n)\in \Gr^W_{-1}\p_0\oplus\bigoplus_{j=1}^n\Gr^W_{-1}\p_j$$
where the $a_j$ are some constants in $\Ql$. From the graded Lie algebra structure of $\h_{g,n}$, we determine all the $R$-invariant Lie algebra sections of $\beta_n$.

\begin{theorem}\label{sections of lie algebras} Suppose $g\geq 3$. There are exactly $2n$ $R$-invariant Lie algebra sections of $\beta_n$, $\zeta_1^{\pm},\ldots,\zeta_n^{\pm}$, where $\zeta_j^{\pm}$ on $\Gr^W_{-1}$ is given by $$\Gr^W_{-1}\zeta_j^{\pm}:(u_1,\ldots,u_n)\to (\pm u_j;u_1,\ldots,u_n).$$ In particular, $\beta_0$ has no $R$-invariant Lie algebra section. 

\end{theorem}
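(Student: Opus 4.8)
The plan is to show that an $R$-invariant Lie algebra section $\zeta$ of $\beta_n$ is completely determined by the scalars $a_1,\ldots,a_n$ occurring in its weight $-1$ part, and then to constrain those scalars so that exactly one equals $\pm 1$ and the rest vanish. First I would record existence: by Proposition \ref{induced sections} the tautological section $x_j$ and its hyperelliptic conjugate $J\circ x_j$ of $\cC_{\cH_{g,n/k}}\to\cH_{g,n/k}$ each induce an $R$-invariant Lie algebra section of $\beta_n$, and on $\Gr^W_{-1}$ they send $(u_1,\ldots,u_n)$ to $(\pm u_j;u_1,\ldots,u_n)$, since $J$ acts by $-\id$ on $H=H^1_\et(C_{\bar x},\Ql(1))$. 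This yields $\zeta_1^\pm,\ldots,\zeta_n^\pm$, pairwise distinct because they already differ on $\Gr^W_{-1}$.

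Next I would argue that $\zeta$ is determined by $(a_1,\ldots,a_n)$. As a graded Lie algebra homomorphism, $\zeta$ is fixed by its values on generators of $\h_{g,n}$. Since $\p^n$ is generated in weight $-1$ and $\mathfrak{v}_g$ is generated by $H_1(\mathfrak{v}_g)$, the algebra $\widehat{\mathfrak{v}}_{g,n}$ is generated in weights $\geq -4$ by $\Gr^W_{-1}\p^n=H^n$ together with lifts of $H_1(\mathfrak{v}_g)$, which by Proposition \ref{even weight} occur only in weight $-2$ (geometrically nontrivial, plus a Tate class $\Ql(1)$) and in weights $-2,-4$ (the Tate classes $\Ql(1),\Ql(2)$). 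On each such generator the $\p_0$-component of $\zeta$ is an $R$-module map into $\p_0(m)=\Gr^W_{-m}\p$ with $1\le m\le 4$. Because $\p(m)$ contains no copy of the trivial $\Sp$-representation for $1\le m\le 4$ (Proposition \ref{presentation}), the Tate generators map with zero $\p_0$-component; and by Lemma \ref{no nichi} together with $\Hom_R(\Gr^W_{-2}\mathfrak{v}_g,V_{[1^2]})=0$, the geometrically nontrivial weight $-2$ generators of $\mathfrak{v}_g$ also map with zero $\p_0$-component. Thus the only freedom is the scalars $a_j\in\Hom_R(H_j,H_0)=\Ql$, and $\zeta$ is determined by them.

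Then I would extract two numerical constraints. Using that distinct factors $\p_i,\p_j$ commute in $\widehat{\mathfrak{v}}_{g,n+1}$, the homomorphism property gives $\zeta(u)=a_j u^{(0)}+u^{(j)}$ for $u\in\p_j(1)$, so on any length-$m$ internal bracket of weight $-1$ vectors in the $j$th factor the $\p_0$-component scales by $a_j^m$. On the other hand, for $\delta\in\Gr^W_{-2}\mathfrak{v}_g$ (which maps to $\xi_{[2^2]}\in\Der_2\p$) we have $\zeta(\delta)=\delta$, and since $\delta$ acts on $\p_0$ and $\p_j$ by the same derivation, the transfer $\p_j(3)\to\p_0(3)$ computed via $[\delta,u]$ has scalar $a_j$. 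As $\p(3)=V_{[2+1]}$ is irreducible, these two computations of the same Schur scalar force $a_j=a_j^3$, so $a_j\in\{-1,0,1\}$. For the second constraint I would take the commuting Dehn twists $\omega_1,\omega_{g-1}$ and the elements $\delta,\tilde\delta\in\Gr^W_{-2}\mathfrak{v}_g$ of Proposition \ref{nonzero inner bracket}: there $[\delta,\tilde\delta]$ is the diagonal vector $(w,\ldots,w)$ with $w$ a nonzero multiple of $[[a_1,b_1],[a_g,b_g]]\in\p(4)$, while its $\mathfrak{v}_g$-component vanishes because the twists commute. Applying $\zeta$, the identity $\zeta([\delta,\tilde\delta])=[\zeta\delta,\zeta\tilde\delta]$ has $\p_0$-component $w$ (read off in $\h_{g,n+1}$ via Proposition \ref{nonzero inner bracket} for $n+1$ points), whereas writing $w^{(j)}=[[a_1,b_1]^{(j)},[a_g,b_g]^{(j)}]$ and using the $a_j^2$-scaling on $\p_j(2)$ gives $\zeta\bigl(\sum_j w^{(j)}\bigr)$ a $\p_0$-component of $(\sum_{j} a_j^4)\,w$. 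Comparing yields $\sum_{j=1}^n a_j^4=1$; with $a_j\in\{-1,0,1\}$ this forces exactly one $a_j=\pm1$, giving at most the $2n$ sections $\zeta_j^\pm$. For $n=0$ the same comparison reads $0=1$, so $\beta_0$ has no $R$-invariant Lie algebra section.

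The main obstacle is the weight $-4$ comparison in the last step: one must identify the diagonal value $w$ consistently in both $\h_{g,n}$ and $\h_{g,n+1}$, realize it as the internal bracket $[[a_1,b_1],[a_g,b_g]]$ in each factor so that the multiplicative $a_j^m$-scaling of $\zeta$ applies, and track carefully which copy $\p_i$ every term lands in, using the vanishing of cross-brackets $[\p_i,\p_j]$. The whole argument turns on matching a single $R$-module map computed in two ways—the transfer scalars $a_j^3$ versus $a_j$ in weight $-3$, and $\sum a_j^4$ versus $1$ in weight $-4$—which is precisely where Proposition \ref{presentation}, Lemma \ref{no nichi} and the nontriviality in Proposition \ref{nonzero inner bracket} enter and is the crux of the proof.
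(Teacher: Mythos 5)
Your argument follows the paper's proof step for step: Schur's lemma pins down $\Gr^W_{-1}\zeta$ by scalars $a_j$; Lemma \ref{no nichi} kills the component $\Gr^W_{-2}\mathfrak{v}_g\to\Gr^W_{-2}\p_0$; a weight $-3$ comparison yields $a_j^3=a_j$; the commuting Dehn twists of Proposition \ref{nonzero inner bracket} yield $\sum_j a_j^4=1$ in weight $-4$; and representation-theoretic vanishing (Propositions \ref{presentation} and \ref{even weight}) shows $\zeta$ is determined by its weight $-1$ part. The existence half via Proposition \ref{induced sections} and the $n=0$ case are handled exactly as in the paper.

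The one genuine gap is in the step $a_j=a_j^3$. You compare the scalar $a_j$ read off from $\zeta([\delta,u^{(j)}])=[\zeta\delta,\zeta u^{(j)}]$ with the scalar $a_j^3$ coming from the internal grading of $\p_j(3)$, and invoke irreducibility of $\p(3)=V_{[2+1]}$ to equate them. But this comparison is vacuous unless the pairing $[\;,\;]:\Gr^W_{-1}\p_j\otimes\Gr^W_{-2}\mathfrak{v}_g\to\Gr^W_{-3}\p_j$ is nonzero, i.e.\ unless the image of $\Gr^W_{-2}\mathfrak{v}_g$ in $\Der_2\p$ (which lands in the copy of $V_{[2^2]}$ by Lemma \ref{no nichi}) acts nontrivially on $\p(1)$. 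If that action were zero, both of your ``computations of the same Schur scalar'' would give $0$ and no constraint on $a_j$ would result; and $\sum_j a_j^4=1$ alone does not force $a_j\in\{0,\pm1\}$. None of the results you cite supplies this nonvanishing: Proposition \ref{nonzero inner bracket} is a weight $-4$ statement about $[\Gr^W_{-2}\mathfrak{v}_g,\Gr^W_{-2}\mathfrak{v}_g]$, not about the action on $H$. The paper proves exactly this missing input by an explicit computation inside the proof of Theorem \ref{sections of lie algebras}: it evaluates $\left[\phi'(a_2\wedge\theta),\ \phi\left((a_1\wedge b_1)^2-\tfrac{3}{g+1}(a_1\wedge b_1)\cdot\theta\right)\right]$ at $a_2$, obtains $\tfrac{3}{g+1}[[[a_1,b_1],a_2],a_2]$, and checks this is nonzero by moving it to a highest weight vector of the copy of $V_{[3+1]}$ in $\p(4)$; this gives $[V_{[1]},V_{[2^2]}]\neq 0$ in $\Der_3\p$ and hence the surjectivity your Schur-scalar comparison needs. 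With that verification added, the rest of your outline, including the weight $-4$ bookkeeping using $[\p_i,\p_j]=0$, is sound and coincides with the paper's argument.
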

\begin{proof}
	Let $\zeta$ be an $R$-invariant Lie algebra section of $\beta_n$. 
Since $\zeta$ is a Lie algebra section and the bracket $[~,~];\Lambda^2\Gr^W_{-1}\p\to\Gr^W_{-2}\p$ is surjective, we see that $\Gr^W_{-2}\zeta$ is given by
$$\Gr^W_{-2}\zeta:(v_1,\ldots,v_n; r)\mapsto (\sum_{j=1}^na^2_jv_j;v_1,\ldots,v_n;r)\in\Gr^W_{-2}\p_0\oplus\bigoplus_{j=1}^n\Gr^W_{-2}\p_j\oplus\Gr^W_{-2}\mathfrak{v}_g,$$
where  each $v_j$ is in $\Gr^W_{-2}\p_j$ and $r$ in $\Gr^W_{-2}\mathfrak{v}_g$. Note that there is no $R$-map from the component $\Gr^W_{-2}\mathfrak{v}_g$ to $\Gr^W_{-2}\p_0\cong V_{[1^2]}$ by Lemma \ref{no nichi}.
Similarly we see that $\Gr^W_{-3}\zeta$ is given by
$$\Gr^W_{-3}\zeta: (w_1,\ldots,w_n)\mapsto (\sum_{j=1}^na^3_jw_j;w_1,\ldots,w_n)\in\Gr^W_{-3}\p_0\oplus\bigoplus_{j=1}^n\Gr^W_{-3}\p_j,$$
where each $w_j$ is in $\Gr^W_{-3}\p_j$. 

\begin{lemma}With notation as above.  Then we have $a_j=0,1$, or $-1$ for each $j=1,\ldots,n$.
\end{lemma}
\begin{proof} Here we omit Tate twists.  Recall that we have $\Der_{1}\p=V_{[1^3]}+V_{[1]}$ and $\Der_{2}\p=V_{[2^2]}+V_{[1^2]}$. The bracket $[~,~]:V_{[1]}\otimes V_{[2^2]}\to V_{[2+1]}\subset\Der_{3}\p$ is not trivial. We can see this, for example,  by evaluating the bracket 
	$$\left[\phi'(a_2\wedge\theta), \phi\left((a_1\wedge b_1)^2-\frac{3}{g+1}(a_1\wedge b_1)\cdot \theta\right)\right]$$ 
	at $a_2$, where $\phi$ is defined in section \ref{image of a dehn twist} and $\phi':\Lambda^3H\to\Der_1\p$ is given by
	$$x\wedge y\wedge z\mapsto (u\mapsto\theta(x,u)[y,z]+\theta(y,u)[z,x]+\theta(z,u)[x,y]).$$
Recall from the proof of Theorem \ref{no zero outer part} that  $\phi\left((a_1\wedge b_1)^2-\frac{3}{g+1}(a_1\wedge b_1)\cdot \theta\right)$ is in the copy of $V_{[2^2]}$ in $\Der_2\p$. The copy of $V_{[1]}$ in $\Der_1\p$ can be identified with $\phi'(H\wedge\theta)$. We have 
$$\left[\phi'(a_2\wedge\theta), \phi\left((a_1\wedge b_1)^2-\frac{3}{g+1}(a_1\wedge b_1)\cdot \theta\right)\right](a_2)=\frac{3}{g+1}[[[a_1,b_1],a_2],a_2],$$
which can be mapped to the highest weight vector of the copy of $V_{[3+1]}$ in $\Gr^W_{-4}\p$, and hence it is nonzero. Therefore, the bracket $[~,~]:V_{[1]}\otimes V_{[2^2]}\to V_{[2+1]}\subset\Der_{3}\p$ is surjective. This implies that the bracket 
$$[~,~]:\Gr^W_{-1}\p_j\otimes \Gr^W_{-2}\mathfrak{v}_g\to \Gr^W_{-3}\p_j $$
is also surjective. Therefore, we also have
$$\Gr^W_{-3}\zeta: (w_1,\ldots,w_n)\mapsto (\sum_{j=1}^na_jw_j,w_1,\ldots,w_n)\in\Gr^W_{-3}\p_0\oplus\bigoplus_{j=1}^n\Gr^W_{-3}\p_j.$$ Thus we obtain the relations $a_j^3=a_j$ or $a_j(a_j^2-1)=0$ for each $j=1,\ldots,n$.
\end{proof}

By Proposition \ref{nonzero inner bracket}, there are elements 
$\delta_n, \tilde{\delta}_n\in \Gr^W_{-2}\mathfrak{v}_g\subset\h_{g,n}(-2)$
such that the bracket $[\delta_n,\tilde{\delta}_n]$ is nontrivial and maps diagonally to $\bigoplus_{j=1}^n\Gr^W_{-4}\p_j$. Denote by $x_j$ the $j$th component of $[\delta_n,\tilde{\delta}_n]$ in $\Gr^W_{-4}\p_j$. The $x_j$ are all equal under the identification $\p_j\cong \p$ and denote the common element by $x$. Let $V$ be the irreducible $R$-subrepresentation containing $x$ of $\Gr^W_{-4}\p$.  Then the restriction of $\Gr^W_{-4}\zeta$ to  $V^n\subset\bigoplus_{j=1}^n\Gr^W_{-4}\p_j\subset \h_{g,n}(-4)$ is given by
$$(\Gr^W_{-4}\zeta)(t_1,\ldots,t_n)=(\sum_{j=1}^na_j^4t_j,t_1,\ldots,t_n).$$ Since $\zeta(\delta_n)=\delta_{n+1}$ and $\zeta(\tilde{\delta}_n)=\tilde{\delta}_{n+1}$, we have
$$(\Gr^W_{-4}\zeta)(x_1,\ldots,x_n)=(x_0,x_1,\ldots,x_n),$$
where $x_j=[\delta_n,\tilde{\delta}_n]$ for each $j=0,\ldots,n$. Hence we have $x=\sum_{j=1}^na_j^4x$ and so the relation $\sum_{j=1}^na_j^4=1$. This implies that $a_j^2=1$ for some $j$ and $a_i=0$ for $i\not=j$. 
In $\Gr^W_{-4}$, the section $\Gr^W_{-4}\zeta$ is given by 
$$\Gr^W_{-4}\zeta:(t_1,\ldots,t_n; s)\to \left(\phi_{\zeta}(s)+t_j; t_1,\ldots,t_n;s\right)\in \Gr^W_{-4}\p_0\oplus\bigoplus_{j=1}^n\Gr^W_{-4}\p_j\oplus\Gr^W_{-4}\mathfrak{v}_g,$$
where $\phi_{\zeta}:\Gr^W_{-4}\mathfrak{v}_g\to\Gr^W_{-4}\p$ is the $R$-invariant map determined by $\zeta$. We claim that for any $\zeta$, the map $\phi_\zeta$ is trivial. There is an $R$-decomposition $\Gr^W_{-4}\mathfrak{v}_g=\Gr^W_{-4}\mathfrak{v}_g'\oplus \Gr^W_{-4}H_1(\mathfrak{v}_g),$ where $\mathfrak{v}_g'=[\mathfrak{v}_g,\mathfrak{v}_g]$, by fixing a continuous $R$-invariant section of $\mathfrak{v}_g\to H_1(\mathfrak{v}_g)$.  Say $s=s'+s^\ab$, where $s'\in \Gr^W_{-4}\mathfrak{v}_g'$ and $s^\ab\in\Gr^W_{-4}H_1(\mathfrak{v}_g).$ The element $s'$ is a sum of the elements of the form $[v,u]$ for $v,u\in \Gr^W_{-2}\mathfrak{v}_g$. We may assume that $s'=[v,u]$. Viewing the bracket $[v,u]$ in $\h_{g,n}$, we have $[v,u]=t_1+\cdots +t_n+s',$ where $t_1=\cdots=t_n=t$ are elements in $\Gr^W_{-4}\p$ determined by the bracket on $\h_{g,n}$. Then since $\zeta$ is a Lie algebra section, it follows that  $\Gr^W_{-4}\zeta(t_1,\ldots,t_n;s')=(t_0,t_1,\ldots,t_n;s')$, which implies that $t= \phi_\zeta(s')+t$, or $\phi_\zeta(s')=0$. That $\phi_\zeta(s^\ab)=0$ follows from Proposition \ref{even weight}. Thus each section of $\beta_n$ is determined by its effect on $\Gr^W_{-1}$. This completes the proof. 
\end{proof}
\section{The sections of the universal hyperelliptic curves}
In this section, we prove that the sections of the universal hyperelliptic curves are exactly the tautological ones and their hyperelliptic conjugates. Let $H=H_\Ql$. Each section $x$ of $\cC_{\cH_{g,n}/k}\to \cH_{g,n/k}$ gives a characteristic class $\kappa_{x}$ in $H^1_\et(\cH_{g,n/k}, \H_\Ql)\cong H^1(\pi_1(\cH_{g,n/k}), H)$. This class $\kappa_x$ is the pullback of the universal class $\kappa$ associated to the tautological section of $\cC_{g,1/k}\to \M_{g,1/k}$. For the construction of the universal class $\kappa$, see \cite{hm1}. For each $j=1\ldots,n$, denote by $\kappa_j$ the characteristic class corresponding to the $j$th tautological section of $\cC_{\cH_{g,n}/k}\to \cH_{g,n/k}$. On the other hand, the section $x$ induces a section $\zeta_x$ of $\beta_n$ by Proposition \ref{induced sections} and it has to be one of the $\zeta^\pm_j$ by Theorem \ref{sections of lie algebras}. Thus $\zeta_x$ is uniquely determined by its action on $\Gr^W_{-1}$. Firstly, we see that this action is determined by the cohomology class $\kappa_x$. 
\begin{proposition}\label{weight -1} If $g\geq 2$, then there are $R$-invariant isomorphisms 
	$$\h_{g,n}(-1)\cong \bigoplus_{j=1}^n\Gr^W_{-1} \p_j\cong \Gr^W_{-1}H_1(\widehat{\mathfrak{v}}_{g,n})\cong \Gr^W_{-1}H_1(\mathfrak{v}_{g,n}).$$
\end{proposition}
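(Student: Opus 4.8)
The plan is to establish the four stated isomorphisms one at a time, reading the chain from left to right, and to observe that each arrow is a morphism of $R$-modules so that $R$-invariance is automatic throughout. The only genuine input is a bookkeeping of weights together with the exactness of the functor $\Gr^W_\bullet$; no computation beyond the structural results already in hand should be needed.

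First I would dispose of the leftmost identification. Since $W_{-5}\widehat{\mathfrak{v}}_{g,n}$ is concentrated in weights $\leq -5$, passing to the quotient $\widehat{\mathfrak{v}}_{g,n}/W_{-5}\widehat{\mathfrak{v}}_{g,n}$ leaves the graded piece in weight $-1$ unchanged, so $\h_{g,n}(-1)=\Gr^W_{-1}\widehat{\mathfrak{v}}_{g,n}$ by the very definition of $\h_{g,n}$. For the second isomorphism I would invoke the exact sequence $0\to(\Gr^W_{-1}\p)^n\to\Gr^W_{-1}\widehat{\mathfrak{v}}_{g,n}\to 0$ of $R$-modules obtained earlier (the weight $-1$ row coming from $0\to\p^n\to\widehat{\mathfrak{v}}_{g,n}\to\mathfrak{v}_g\to 0$ after applying $\Gr^W_\bullet$); here the term $\Gr^W_{-1}\mathfrak{v}_g$ drops out precisely because it vanishes by Proposition \ref{odd weight is zero}. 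Identifying each $\p_j$ with $\p$ then gives $\Gr^W_{-1}\widehat{\mathfrak{v}}_{g,n}\cong\bigoplus_{j=1}^n\Gr^W_{-1}\p_j$.

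Next I would treat the two passages to $H_1$ by the same general mechanism. For any negatively weighted pronilpotent Lie algebra $\mathfrak{n}$ in our setting one has $W_{-1}\mathfrak{n}=\mathfrak{n}$, hence $[\mathfrak{n},\mathfrak{n}]\subseteq W_{-2}\mathfrak{n}$; applying the exact functor $\Gr^W_\bullet$ to $0\to[\mathfrak{n},\mathfrak{n}]\to\mathfrak{n}\to H_1(\mathfrak{n})\to 0$ and using $\Gr^W_{-1}[\mathfrak{n},\mathfrak{n}]=0$ yields $\Gr^W_{-1}\mathfrak{n}\cong\Gr^W_{-1}H_1(\mathfrak{n})$. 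Taking $\mathfrak{n}=\widehat{\mathfrak{v}}_{g,n}$ gives the third isomorphism $\Gr^W_{-1}\widehat{\mathfrak{v}}_{g,n}\cong\Gr^W_{-1}H_1(\widehat{\mathfrak{v}}_{g,n})$, and taking $\mathfrak{n}=\mathfrak{v}_{g,n}$ gives the companion statement $\Gr^W_{-1}\mathfrak{v}_{g,n}\cong\Gr^W_{-1}H_1(\mathfrak{v}_{g,n})$ that I will need for the last step.

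Finally, for $\Gr^W_{-1}H_1(\widehat{\mathfrak{v}}_{g,n})\cong\Gr^W_{-1}H_1(\mathfrak{v}_{g,n})$ I would use the surjection $\mathfrak{v}_{g,n}\to\widehat{\mathfrak{v}}_{g,n}$ whose kernel $\mathfrak{k}$ was identified in Proposition \ref{induced sections} with $\ker(\p_{g,n}\to\bigoplus_j\p_j)$. The key point—and the one place the argument is not purely formal—is that $\mathfrak{k}$ is concentrated in weights $\leq -2$: by \cite[Thm.~12.6]{hain0} its associated graded is generated as a Lie ideal in $\Gr^W_\bullet\p_{g,n}$ by $\Gr^W_{-2}\mathfrak{k}=\bigoplus_{i<j}\Ql(1)$, so $\Gr^W_{-1}\mathfrak{k}=0$. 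The surjection induces a surjection $H_1(\mathfrak{v}_{g,n})\to H_1(\widehat{\mathfrak{v}}_{g,n})$ whose kernel is the image of $\mathfrak{k}$, a quotient of $\mathfrak{k}$ and hence again trivial in weight $-1$; applying the exact functor $\Gr^W_{-1}$ therefore produces the desired isomorphism, and concatenating the four isomorphisms completes the proof. I expect the main (mild) obstacle to be exactly this last weight estimate on $\mathfrak{k}$, namely making precise via \cite[Thm.~12.6]{hain0} that the extra relations distinguishing $\mathfrak{v}_{g,n}$ from $\widehat{\mathfrak{v}}_{g,n}$ appear only from weight $-2$ downward; once that is in place everything else is formal from the exactness of $\Gr^W_\bullet$ and $R$-equivariance.
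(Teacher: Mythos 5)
Your argument is correct, and its overall skeleton matches the paper's: both reduce $\h_{g,n}(-1)$ to $\Gr^W_{-1}\widehat{\mathfrak{v}}_{g,n}$ via the graded exact sequence coming from $0\to\p^n\to\widehat{\mathfrak{v}}_{g,n}\to\mathfrak{v}_g\to 0$ together with the vanishing of $\Gr^W_{-1}\mathfrak{v}_g$, and both finish by noting that the surjection $\mathfrak{v}_{g,n}\to\widehat{\mathfrak{v}}_{g,n}$ has kernel $\mathfrak{k}$ concentrated in weights $\leq -2$. Where you genuinely diverge is at the middle link $\bigoplus_j\Gr^W_{-1}\p_j\cong\Gr^W_{-1}H_1(\widehat{\mathfrak{v}}_{g,n})$. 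The paper works at the level of ungraded $H_1$'s: it starts from the right-exact sequence $\bigoplus_j H_j\to H_1(\widehat{\mathfrak{v}}_{g,n})\to H_1(\mathfrak{v}_g)\to 0$ and proves injectivity of the left-hand map by factoring $H\to H_1(\mathfrak{v}_{g,1})\to\Der_1\p$ through the adjoint action and using that $\Gr^W_{-1}\p\to\Der_1\p$ is injective (which rests on the center-freeness of $\p$). You instead observe that for any negatively weighted pronilpotent $\mathfrak{n}$ one has $[\mathfrak{n},\mathfrak{n}]\subseteq W_{-2}\mathfrak{n}$ (property (ii) of Theorem \ref{weighted property 1}, since $\mathfrak{n}=W_{-1}\mathfrak{n}$), so $\Gr^W_{-1}\mathfrak{n}\cong\Gr^W_{-1}H_1(\mathfrak{n})$ formally, and then you never need any injectivity statement about $H_1$'s at all. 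This is a cleaner and more elementary route for that step: it buys you independence from the derivation-algebra input, at the cost of proving slightly less (the paper's argument also yields the exactness of $0\to H\to H_1(\mathfrak{v}_{g,1})\to H_1(\mathfrak{v}_g)\to 0$ as ungraded modules, which is not needed here but is a stronger structural fact). Your handling of the final isomorphism via the image of $\mathfrak{k}$ in $H_1(\mathfrak{v}_{g,n})$ and the exactness of $\Gr^W_\bullet$ is also slightly more explicit than the paper's one-line assertion, and the appeal to \cite[Thm.~12.6]{hain0} (or simply to $\Gr^W_{-1}\p_{g,n}\cong\Gr^W_{-1}\p^n$) correctly supplies the needed weight estimate $\Gr^W_{-1}\mathfrak{k}=0$.
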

\begin{proof}
	Recall from Proposition \ref{comm diag} that there is the exact sequence of $\widehat{\D}_{g,n}$-modules
	$$0\to \oplus_{j=1}^n\p_j\to \widehat{\mathfrak{v}}_{g,n}\to\mathfrak{v}_g\to 0, $$
	which induces the exact sequence $\oplus_{j=1}^nH_j\to H_1(\widehat{\mathfrak{v}}_{g,n})\to H_1(\mathfrak{v}_g)\to 0$. The natural map $\cC_{\cH_{g/k}}^n\to (\cH_{g,1/k})^n$ induces the $\widehat{\D}_{g,n}$-module map $H_1(\widehat{\mathfrak{v}}_{g,n})\to \oplus_{j=1}^nH_1(\mathfrak{v}_{g,1})_j$, where the $j$th component corresponds to the $j$th projection of $\cC_{\cH_{g/k}}^n\to \cH_{g,1/k}$.     Then there is a commutative diagram
$$\xymatrix{
	\oplus_{j=1}^n H_j\ar[r]\ar@{=}[d]&H_1(\widehat{\mathfrak{v}}_{g,n})\ar[r]\ar[d]&H_1(\mathfrak{v}_g)\ar[r]\ar[d]^{\text{diag}}&0\\
	\oplus_{j=1}^n H_j\ar[r]&\oplus_{j=1}^nH_1(\mathfrak{v}_{g,1})_j\ar[r]&\oplus H_1(\mathfrak{v}_g)^n\ar[r]&0,
}
$$ where the rows are exact. We claim that the sequence
$0\to H\to H_1(\mathfrak{v}_{g,1})\to H_1(\mathfrak{v}_g)\to 0$ induced from  the exact  sequence $ 0\to\p\to\mathfrak{v}_{g,1}\to\mathfrak{v}_g\to0$ is exact. The composition $\p\to \mathfrak{v}_{g,1}\to W_{-1}\Der\p$ obtained by the adjoint action of $\mathfrak{v}_{g,1}$ on $\p$ is injective. We observe that the induced map $H\to H_1(\mathfrak{v}_{g,1})\to H_1(W_{-1}\Der\p)\to \Der_{1}\p$ agrees with the map $\Gr^W_{-1}\p=H\to \Gr^W_{-1}\mathfrak{v}_{g,1}\to \Der_{1}\p$, which is injective by the exactness of $\Gr^W_\bullet$. Thus the injectivity $H\to H_1(\mathfrak{v}_{g,1})$ follows, from which the injectivity of $\oplus_{j=1}^nH_j\to H_1(\widehat{\mathfrak{v}}_{g,n})$ follows. Since the functor $\Gr^W_\bullet$ is exact and $\Gr^W_{-1}H_1(\mathfrak{v}_g)=0$, we have that $\Gr^W_{-1}H_1(\widehat{\mathfrak{v}}_{g,n})\cong \oplus_{j=1}^nH_j$. Now, the open immersion $\cH_{g,n/k}\to \cC^n_{\cH_g/k}$ induces the surjection $\mathfrak{v}_{g,n}\to \widehat{\mathfrak{v}}_{g,n}$, which is an isomorphism on $\Gr^W_{-1}$. 
\end{proof}
\begin{remark} The same statement holds for $\widehat{\mathfrak{v}}_{g,n}[r]$ and $\mathfrak{v}_{g,n}[r]$ with $r \geq 1$. 
\end{remark}
By Proposition \ref{generator iso}, there are isomorphisms 
$$\Hom_{R}(\Gr^W_{-1}H_1(\widehat{\mathfrak{v}}_{g,n}), H)\cong \Hom_{R}(\Gr^W_{-1}H_1(\mathfrak{v}_{g,n}), H)\cong H^1(\pi_1(\cH_{g,n/k}), H).$$
It follows from Proposition \ref{weight -1} that  
$H^1(\pi_1(\cH_{g,n/k}), H)\cong H^1_\et(\cH_{g,n/k}, \H_\Ql)\cong \oplus_{j=1}^n\Ql_j$. In fact, it follows from the analogous result for $\M_{g,n/k}$ \cite[Prop. 12.1, Cor. 12.6]{hain2} that, for $g\geq3$, there is an isomorphism $$H^1_\et(\cH_{g,n/k}, \H_\Ql)\cong \oplus_{j=1}^n\Ql\kappa_j.$$ Let $p_0$ be the projection onto the $0$th component of $\h_{g,n+1}(-1)$. The naturality of the isomorphism $\Hom_{R}(\Gr^W_{-1}H_1(\mathfrak{v}_{g,n}), H)\cong H^1(\pi_1(\cH_{g,n/k}), H)$ and \cite[Cor. 12.6]{hain2} give the following fact:

\begin{proposition} Suppose $g\geq 3$. Let $x$ be a section of $\cC_{\cH_{g,n}/k}\to \cH_{g,n/k}$.
	Then under the isomorphism $\Hom_{R}(\Gr^W_{-1}H_1(\widehat{\mathfrak{v}}_{g,n}), H)\cong H^1_\et(\cH_{g,n/k}, \H_\Ql)$  the composition $p_0\circ\Gr^W_{-1}\zeta_x$ corresponds to $\frac{1}{2g-2}\kappa_x$.  \qed
\end{proposition}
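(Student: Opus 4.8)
The plan is to deduce the statement from its non-hyperelliptic counterpart, \cite[Cor.~12.6]{hain2}, by pulling back along the closed immersion $i:\cH_{g,n/k}\hookrightarrow\M_{g,n/k}$ and exploiting the naturality of the comparison isomorphism of Proposition \ref{generator iso}. First I would record that both invariants of $x$ land in the same group. By Proposition \ref{weight -1} the open immersion $\cH_{g,n/k}\hookrightarrow\cC^n_{\cH_g/k}$ identifies $\h_{g,n}(-1)$ with $\Gr^W_{-1}H_1(\widehat{\mathfrak{v}}_{g,n})\cong\Gr^W_{-1}H_1(\mathfrak{v}_{g,n})\cong\bigoplus_{j=1}^nH_j$, so $p_0\circ\Gr^W_{-1}\zeta_x$ is an element of $\Hom_R(\bigoplus_jH_j, H)$, and under
$$\Phi:\Hom_R(\Gr^W_{-1}H_1(\mathfrak{v}_{g,n}), H)\cong H^1(\pi_1(\cH_{g,n/k}), H)\cong H^1_\et(\cH_{g,n/k},\H_\Ql)$$
the class $\frac{1}{2g-2}\kappa_x$ lies in the target. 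The task is to show $\Phi$ carries the former to the latter.

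The heart of the argument is the naturality square induced by $i$, where $\mathfrak{v}^{\M}_{g,n}$ and $\Phi^{\M}$ denote the analogues over $\M_{g,n/k}$:
$$\xymatrix{
\Hom_R(\Gr^W_{-1}H_1(\mathfrak{v}^{\M}_{g,n}), H)\ar[r]^{\Phi^{\M}}\ar[d]_{i^\ast}&H^1(\pi_1(\M_{g,n/k}), H)\ar[d]^{i^\ast}\\
\Hom_R(\Gr^W_{-1}H_1(\mathfrak{v}_{g,n}), H)\ar[r]^{\Phi}&H^1(\pi_1(\cH_{g,n/k}), H).
}$$
It commutes because weighted completion and Proposition \ref{generator iso} are natural in the profinite group. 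On $\Gr^W_{-1}$ one has $\Gr^W_{-1}H_1(\mathfrak{v}^{\M}_{g,n})=\Lambda^3_0H\oplus\bigoplus_jH_j$, whereas in the hyperelliptic case the $\Lambda^3_0H$-summand is absent since $\Gr^W_{-1}H_1(\mathfrak{v}_g)=0$ by Propositions \ref{even weight} and \ref{odd weight is zero}; thus $\Gr^W_{-1}H_1(\mathfrak{v}_{g,n})\to\Gr^W_{-1}H_1(\mathfrak{v}^{\M}_{g,n})$ is the inclusion $\bigoplus_jH_j\hookrightarrow\Lambda^3_0H\oplus\bigoplus_jH_j$. Because $\Lambda^3_0H=V_{[1^3]}$ and $H=V_{[1]}$ are non-isomorphic irreducible $R$-modules for $g\geq3$, we have $\Hom_R(\Lambda^3_0H, H)=0$, so the left vertical $i^\ast$ is an isomorphism sending the $j$th projection $(v;u_1,\dots,u_n)\mapsto u_j$ over $\M_{g,n}$ to the $j$th projection $(u_1,\dots,u_n)\mapsto u_j$ over $\cH_{g,n}$.

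On the cohomological side, $\kappa_x$ is by construction the pullback of the universal class $\kappa$ attached to the tautological section of $\cC_{g,1/k}\to\M_{g,1/k}$, the same universal class underlying the $\M_{g,n}$-theory; since the $j$th tautological sections are compatible with $i$, the map $i^\ast$ carries the $\M_{g,n}$-class $\kappa_j$ to the hyperelliptic class $\kappa_j$. Hain's \cite[Cor.~12.6]{hain2} asserts $\Phi^{\M}(\text{$j$th projection})=\frac{1}{2g-2}\kappa_j$ over $\M_{g,n}$; chasing this through the commuting square gives $\Phi(\text{$j$th projection})=\frac{1}{2g-2}\kappa_j$ over $\cH_{g,n}$, with the constant unchanged as it depends only on $g$. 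This is precisely the asserted correspondence for the tautological sections, whose induced Lie-algebra sections are the $\zeta_j^{+}$ by Theorem \ref{sections of lie algebras}.

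The step I expect to be the main obstacle is promoting this from the tautological sections to an arbitrary $x$. Both $x\mapsto p_0\circ\Gr^W_{-1}\zeta_x$ (via Proposition \ref{induced sections}) and $x\mapsto\kappa_x$ (via pullback of $\kappa$) should be functorial in the morphism $\cH_{g,n/k}\to\cH_{g,1/k}$ classifying the marked curve $(C,x)$, so that the correspondence for general $x$ is a pullback of the universal identity on $\cH_{g,1/k}$; one must verify that the two invariants are genuinely computed from this same morphism, and in particular that $\zeta_x$ and $\kappa_x$ reference the same index $j$ and sign. Concretely, this requires checking that the identification $\Gr^W_{-1}H_1(\mathfrak{v}_{g,n})\cong\bigoplus_jH_j$ on the hyperelliptic side is the restriction of Hain's identification on $\M_{g,n}$, so that ``the $j$th projection'' denotes corresponding maps on the two sides and the naturality square may legitimately be applied to $x$ itself rather than only to the $n$ tautological sections.
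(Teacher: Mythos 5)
Your proposal is correct and takes essentially the same route as the paper, which gives no written argument for this proposition beyond the one-line justification that it follows from the naturality of the isomorphism $\Hom_{R}(\Gr^W_{-1}H_1(\mathfrak{v}_{g,n}), H)\cong H^1(\pi_1(\cH_{g,n/k}), H)$ of Proposition \ref{generator iso} together with \cite[Cor.~12.6]{hain2}. Your elaboration — the commuting naturality square, the observation that $\Hom_R(\Lambda^3_0H,H)=0$ identifies the two $\Hom$-groups, and the reduction of a general section $x$ to the tautological case over $\cH_{g,1/k}$ via its classifying morphism — is exactly the content that this citation is meant to carry.
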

Then Theorem \ref{sections of lie algebras} and \cite[Cor. 12.6]{hain2} let us determine which classes in\\ $H^1_\et(\cH_{g,n/k}, \H_\Ql)$ come from the sections of $\cC_{\cH_{g,n}/k}\to \cH_{g,n/k}$.
\begin{corollary}\label{the class equals tautological class}
	With notation as above. If $g\geq 3$, then the class $\kappa_x$ is equal to $\kappa_j$ or $-\kappa_j$ for some $j\in\{1,\ldots,n\}$. \qed
\end{corollary}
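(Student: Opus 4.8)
The plan is to assemble the pieces already in place: Theorem~\ref{sections of lie algebras} cuts the list of possible induced sections down to the explicit family $\zeta_1^{\pm},\ldots,\zeta_n^{\pm}$, while the preceding Proposition translates the weight $-1$ behaviour of such a section directly into the class $\kappa_x$. So the corollary is essentially a matter of reading off both descriptions under a single isomorphism and comparing.

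First I would unwind the relevant identification. By Proposition~\ref{weight -1} we have $\Gr^W_{-1}H_1(\widehat{\mathfrak{v}}_{g,n})\cong \bigoplus_{j=1}^n H_j$ with each $H_j\cong H$, so by Schur's lemma $\Hom_R(\Gr^W_{-1}H_1(\widehat{\mathfrak{v}}_{g,n}), H)\cong \bigoplus_{j=1}^n\Ql$, the $j$th coordinate functional being the projection $\mathrm{pr}_j\colon \bigoplus_i H_i\to H_j\cong H$. Under the chain of isomorphisms $\Hom_R(\Gr^W_{-1}H_1(\widehat{\mathfrak{v}}_{g,n}), H)\cong H^1_\et(\cH_{g,n/k}, \H_\Ql)\cong \bigoplus_{j=1}^n\Ql\kappa_j$, I would check that $\mathrm{pr}_j$ corresponds to $\tfrac{1}{2g-2}\kappa_j$. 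This follows by applying the preceding Proposition to the $j$th tautological section $x_j$: its class is $\kappa_{x_j}=\kappa_j$ by definition of $\kappa_j$, and, being the $j$th marked point, it induces $\zeta_j^{+}$ on $\Gr^W_{-1}$, so that $p_0\circ\Gr^W_{-1}\zeta_{x_j}=\mathrm{pr}_j$ matches $\tfrac{1}{2g-2}\kappa_j$.

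Next I would treat an arbitrary section $x$. By Proposition~\ref{induced sections} it induces a section $\zeta_x$ of $\beta_n$, and by Theorem~\ref{sections of lie algebras} we have $\zeta_x=\zeta_j^{\pm}$ for some $j\in\{1,\ldots,n\}$. The explicit formula $\Gr^W_{-1}\zeta_j^{\pm}\colon(u_1,\ldots,u_n)\mapsto(\pm u_j;u_1,\ldots,u_n)$ shows that $p_0\circ\Gr^W_{-1}\zeta_x=\pm\,\mathrm{pr}_j$. On the other hand the preceding Proposition identifies this same functional with $\tfrac{1}{2g-2}\kappa_x$. Comparing the two descriptions through the isomorphism above yields $\tfrac{1}{2g-2}\kappa_x=\pm\tfrac{1}{2g-2}\kappa_j$, hence $\kappa_x=\pm\kappa_j$, as claimed.

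The step requiring the most care is the index-by-index matching $\mathrm{pr}_j\leftrightarrow\tfrac{1}{2g-2}\kappa_j$, together with the sign: I must confirm that the tautological section $x_j$ genuinely induces $\zeta_j^{+}$ (and not some $\zeta_i^{\pm}$ with $i\neq j$ or the opposite sign) on $\Gr^W_{-1}$, so that the projections and the classes $\kappa_j$ are aligned consistently. This rests on the naturality of the isomorphism $\Hom_R(\Gr^W_{-1}H_1(\mathfrak{v}_{g,n}), H)\cong H^1(\pi_1(\cH_{g,n/k}), H)$ and on the determination of $H^1_\et(\cH_{g,n/k},\H_\Ql)\cong\bigoplus_{j=1}^n\Ql\kappa_j$ via \cite[Cor.~12.6]{hain2}; everything else is a direct substitution into the two formulas for $p_0\circ\Gr^W_{-1}\zeta_x$.
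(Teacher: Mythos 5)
Your argument is correct and is essentially the paper's own: the corollary follows by combining the preceding Proposition (identifying $p_0\circ\Gr^W_{-1}\zeta_x$ with $\tfrac{1}{2g-2}\kappa_x$), Theorem~\ref{sections of lie algebras} (forcing $\zeta_x=\zeta_j^{\pm}$), and the identification $H^1_\et(\cH_{g,n/k},\H_\Ql)\cong\bigoplus_j\Ql\kappa_j$ from \cite[Cor.~12.6]{hain2}, exactly as you do. Your extra care about aligning $\mathrm{pr}_j$ with $\tfrac{1}{2g-2}\kappa_j$ via the tautological sections is the same naturality the paper invokes implicitly.
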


Recall that in the case of the universal curve $\cC_{g,n/k}\to \M_{g,n/k}$, the sections are exactly the tautological ones \cite{EaKr}. Hain's algebraic proof of this fact uses the weighted completion of the arithmetic mapping class groups \cite{hain2}. Here, we will modify Hain's idea that two sections occupying the same class in $H^1_\et(\M_{g,n/k}, \H_\Ql)$ are in fact equal. The problem of applying the result \cite[Lemma 13.1]{hain2} to our case is that the key $R$-representation $\Lambda^3_0H:=V_{[1^3]}(-1)$ appearing in $\Gr^W_{-1}\u_{g,n}$ does not appear in $\Gr^W_{-1}\mathfrak{v}_{g,n}$. 
The following result is the modification of \cite[Lemma 13.1]{hain2}, which allows us to conclude the analogous result for our case without the use of the representation $\Lambda^3_0H$.  
\begin{proposition}\label{no map}
	If $g\geq 3$, $n\geq 1$, and $r\geq 1$,  there is no $R$-invariant  Lie algebra homomorphism
	$$\Gr^W_{\bullet}\mathfrak{v}_{g,n}[r]/W_{-3}\to\Gr^W_{\bullet}\mathfrak{v}_{g,2}/W_{-3}$$
	that induces  the map $(u_1,\ldots, u_n)\mapsto (u_1, u_1)$ on $\Gr^W_{-1}$. 
\end{proposition}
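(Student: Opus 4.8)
The plan is to argue by contradiction, exploiting the Lie algebra homomorphism property together with the presentation of the configuration-space Lie algebra $\Gr^W_\bullet\p_{g,n}$ recorded in \cite[Thm.~12.6]{hain0}. The idea is to produce an element of the source which, on the one hand, is forced by $\Phi$ to map to zero, yet whose image computed directly through the bracket is a nonzero multiple of the diagonal class in the target. This is what replaces the use of the representation $\Lambda^3_0H$ in \cite[Lemma 13.1]{hain2}: the nonvanishing will come instead from the intersection pairing, which produces a coefficient that grows linearly in $g$, while the structural constants from the presentation stay bounded.

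First I would fix the data. Writing $\Gr^W_{-1}\mathfrak{v}_{g,m}=\bigoplus_{j=1}^m H_j$ with $H_j\cong H$ (Proposition~\ref{weight -1} and its Remark), the hypothesis says that on $\Gr^W_{-1}$ the map $\Phi$ sends $x^{(1)}\mapsto x^{(1)}+x^{(2)}$ and $x^{(j)}\mapsto 0$ for $j\geq 2$, where $x^{(j)}$ denotes $x\in H$ placed in the $j$-th factor. From the presentation I take three structural facts about the weight $-2$ relations of $\Gr^W_\bullet\p_{g,m}$: (a) $\Gr^W_{-2}\mathfrak{k}=\bigoplus_{i<j}\Ql(1)\,t_{ij}$ is spanned by the diagonal classes and equals $\ker(\Gr^W_{-2}\p_{g,m}\to\Gr^W_{-2}\p^m)$; (b) for $i\neq j$ the cross-bracket satisfies $[x^{(i)},y^{(j)}]=\theta(x,y)\,t_{ij}$; (c) the self-bracket $\theta_r:=\sum_i[a_i^{(r)},b_i^{(r)}]$ maps to $0$ in $\p^m$ (it equals $\theta=0$ in the $r$-th closed-curve factor and lands in other factors' zero), hence lies in the span of the $t_{rs}$ by (a); for $m=2$ one has $\theta_1^{\mathrm{tgt}}+\theta_2^{\mathrm{tgt}}=c_0\,t_{12}$ with $c_0$ a fixed integer independent of $g$ (in fact $c_0=2$). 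Then in \textbf{Step 1} I observe that $\Phi$ annihilates every source diagonal: for $i<j$ choose $x,y$ with $\theta(x,y)=1$ so that $t_{ij}=[x^{(i)},y^{(j)}]$ by (b); since $j\geq 2$ we have $\Phi(y^{(j)})=0$, hence $\Phi(t_{ij})=[\Phi(x^{(i)}),0]=0$. By (c) the element $\theta_1=\sum_i[a_i^{(1)},b_i^{(1)}]$ lies in the span of the source diagonals (and is simply $0$ when $n=1$), so $\Phi(\theta_1)=0$.

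In \textbf{Step 2} I compute the same element the other way, using that $\Phi$ is a Lie algebra homomorphism with the prescribed action on $\Gr^W_{-1}$:
\begin{align*}
\Phi(\theta_1)&=\sum_i\bigl[\Phi(a_i^{(1)}),\Phi(b_i^{(1)})\bigr]=\sum_i\bigl[a_i^{(1)}+a_i^{(2)},\,b_i^{(1)}+b_i^{(2)}\bigr]\\
&=\theta_1^{\mathrm{tgt}}+\theta_2^{\mathrm{tgt}}+\sum_i\bigl([a_i^{(1)},b_i^{(2)}]+[a_i^{(2)},b_i^{(1)}]\bigr).
\end{align*}
By (b) and $\theta(a_i,b_i)=1$ each of the two cross terms equals $t_{12}$, so the last sum contributes $2g\,t_{12}$, while by (c) the self terms contribute $c_0\,t_{12}$. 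Hence $\Phi(\theta_1)=(2g+c_0)\,t_{12}$ in $\Gr^W_{-2}\mathfrak{v}_{g,2}$. Comparing with Step 1 forces $(2g+c_0)\,t_{12}=0$; but $t_{12}$ generates the nonzero summand $\Gr^W_{-2}\mathfrak{k}=\Ql(1)$, and $2g+c_0\neq 0$ for $g\geq 3$, a contradiction. The same computation applies verbatim for every $n\geq 1$, and the level $r$ plays no role since it does not affect these low-weight relations.

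The hard part will be pinning down the precise weight $-2$ relations, especially the normalizations in (b) and (c), and verifying that after the sign and normalization bookkeeping the coefficient of $t_{12}$ really is $2g$ plus a bounded term rather than something that cancels. The conceptual reason the argument succeeds without $\Lambda^3_0H$ is exactly that $\theta_1$ carries no information in the separate curve factors (it vanishes there), so under the ``diagonal doubling'' imposed on $\Gr^W_{-1}$ it must reappear entirely through the cross-factor bracket, whose coefficient is controlled by $\sum_i\theta(a_i,b_i)=g$; this grows with $g$ while the intrinsic presentation constants do not, which is what rules out the homomorphism for $g\geq 3$.
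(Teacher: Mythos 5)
Your proposal is correct and follows essentially the same route as the paper's proof: annihilate the diagonal classes $\Theta_{ij}=\sum_k[a_k^{(i)},b_k^{(j)}]$ using that $\Phi$ kills the factors $H_j$ for $j\geq 2$, deduce $\Phi(\Theta_1)=0$ from the relation of \cite[Thm.~12.6]{hain0} expressing $\Theta_1$ in terms of them, and contradict this with the direct bracket computation giving a nonzero multiple of $\Theta_{12}$. Your only slip is the sign of $c_0$: with your normalization $t_{12}=\tfrac{1}{g}\Theta_{12}$ the relation $\Theta_i+\tfrac{1}{g}\sum_{j\neq i}\Theta_{ij}=0$ gives $c_0=-2$ rather than $+2$, so the coefficient is $2g-2$ exactly as in the paper, and the conclusion $2g+c_0\neq 0$ for $g\geq 3$ is unaffected.
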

\begin{proof} We identify $H$ with $H_1(C, \Z)\otimes\Ql.$ Recall that $C$ is the fiber of the universal curve over the geometric point $\etabar$. 
	For $u\in H$, denote by $u^{(j)}$ the corresponding element in the $j$th copy of $H$ in 
	$\Gr^W_{-1}\mathfrak{v}_{g,n}[r]= H_1\oplus\cdots\oplus H_n
	.$
	Fix a symplectic basis $a_1, b_1, \ldots, a_g, b_g$ for $H_1(C,\Z)$ and let $\langle~,~\rangle:\Lambda^2H_1(C,\Z)\to\Z$ be the intersection paring. 	Suppose that such a homomorphism $\phi$ exists. From the exact sequence of $\D_{g,m}[r]$-modules
	$$0\to \p_{g,n}\to \mathfrak{v}_{g,m}[r]\to \mathfrak{v}_g[r]\to 0,$$ for a positive integer $m$, there is a decomposition
	$$\Gr^W_{-2}\mathfrak{v}_{g,m}[r]=\bigoplus_{1\leq i<j\leq m}\Ql(1)_{ij}\oplus\bigoplus_{j=1}^m\Gr^W_{-2}\p_j \oplus
	\Gr^W_{-2}\mathfrak{v}_g[r],$$
	where $\Ql(1)_{ij}$ is spanned by $\sum_{k=1}^g [a_k^{(i)},b_k^{(j)}]$. Denote the element $\sum_{k=1}^g [a_k^{(i)},b_k^{(j)}]$ in $\Gr^W_{-2}\mathfrak{v}_{g,m}[r]$ by $\Theta_{ij}$.
	We claim first that $\phi$ vanishes on the $\Ql(1)$ component. For any $i<j$ and $u,v\in H$, the bracket $[u^{(i)},v^{(j)}]$ is computed in \cite[\S12]{hain0} and is given by
	$$[u^{(i)},v^{(j)}]=\frac{\langle u, v\rangle}{g}\sum_{k=1}^g [a_k^{(i)},b_k^{(j)}]\,\,\,\,\text{in}\,\,\,\Gr^W_{-2}\mathfrak{v}_{g,m}[r].$$
	For $1<j$, we have
	$\phi(v^{(j)})=(0,0)$ in $\text{Gr}^W_{-1}\mathfrak{v}_{g,2}$, and hence $[\phi(u^{(i)}),\phi(v^{(j)})]=0$. Since $\phi$ is a homomorphism, it follows that
	$\phi(\Theta_{ij})=0$, and therefore $\phi$ vanishes on $\Ql(1)_{ij}$ for all $1\leq i<j\leq n $.\\
	\indent Next, we will compute $\sum_{k=1}^g[\phi(a_k^{(1)}),\phi(b_k^{(1)})]$ in $\Gr^W_{-2}\mathfrak{v}_{g,2}$. Denote the element $\sum_{k=1}^g[a_k^{(i)},b_k^{(i)}]$ in $\Gr^W_{-2}\mathfrak{v}_{g,m}[r]$ by $\Theta_i$. Theorem 12.6 in \cite{hain0} implies that we have
	\begin{align*}
	\phi(\Theta_1)=\sum_{k=1}^g[\phi(a_k^{(1)}),\phi(b_k^{(1)})]&=\sum_{k=1}^g[a_k^{(1)}+a_k^{(2)},b_k^{(1)}+ b_k^{(2)}]\\
	&=\sum_{k=1}^g\left([a_k^{(1)},b_k^{(1)}]+[a_k^{(2)},b_k^{(2)}]+ \frac{2}{g}\Theta_{12}\right)\\
	&=\left(\Theta_1+\Theta_2+ 2\Theta_{12}\right) \,\,\,\text{in}\,\,\,\Gr^W_{-2}\mathfrak{v}_{g,2}.
	\end{align*}
	But one has the relation \cite[Thm. 12.6]{hain0}
	\[\Theta_i+\frac{1}{g}\sum_{j\not=i}\Theta_{ij}=0,\hspace{.5in}\text{for $1\leq i\leq m$},\]
	in $\Gr^W_{-2}\mathfrak{v}_{g,m}[r]$, so in $\Gr^W_{-2}\mathfrak{v}_{g,2}$
	\[\phi(\Theta_1)=\Theta_1+\Theta_2+2\Theta_{12}=\frac{-1}{g}\Theta_{12}+\frac{-1}{g}\Theta_{12}+ 2\Theta_{12}=\frac{2g-2}{g}\Theta_{12}\not = 0.\]
	Therefore we have reach a contradiction.
\end{proof}

\begin{proof}[Proof of Theorem 1]
Let $x$ be a section of $\cC_{\cH_{g,n}/k}\to \cH_{g,n/k}$. By Corollary \ref{the class equals tautological class}, we may assume that $\kappa_x=\kappa_j$ or $-\kappa_j$ for some $j\in \{1,\ldots, n\}$. Without loss of generality, we may assume that $\kappa_x=\kappa_1$. Note that if $\kappa_x=-\kappa_1$, then the class $\kappa_{J\circ x}$ of the hyperelliptic conjugate of the section $x$ denoted by $J\circ x$  is equal to $\kappa_1$. Let $r\geq 3$. By pulling the sections $x$ and $x_1$, we consider them as sections of $\cC_{\cH_{g,n/k}}[r]\to \cH_{g,n/k}[r],$ denoted also by $x_1$ and $x$. The corresponding classes in $H^1_\et(\cH_{g,n/k}[r], \H_\Ql)$ are equal. Let $\mathrm{Jac}\to \cH_{g,n/k}[r]$ be the relative Jacobian of the family $\cC_{\cH_{g,n/k}}[r]\to \cH_{g,n/k}[r]$. By \cite[Cor. 12.4]{hain2}, the class $[x_1]-[x]$ is a torsion in $\Jac(\cH_{g,n/k}[r])$. Say $[x_1]-[x]=t$. If the torsion $t=0$, then $x=x_1$. If $t\not=0$, then $x_1$ and $x$ are disjoint. This implies that there is an induced $k$-morphism $\Phi:\cH_{g,n/k}[r]\to \cH_{g,2/k}$ defined by $[C]\mapsto [C;x_1, x]$. The morphism $\Phi$ induces an $R$-invariant Lie algebra map $\phi:\Gr^W_\bullet\mathfrak{v}_{g,n}[r]/W_{-3}\to \Gr^W_{\bullet}\mathfrak{v}_{g,2}/W_{-3}$.
That $\kappa_x=\kappa_1$ implies that the map $\phi$ on $\Gr^W_{-1}$ is given by $(u_1,u_2,\ldots,u_n)\mapsto (u_1,u_1)$, but this is impossible by Proposition \ref{no map}. Thus we have $t=0$. The sections $x_1$ and $x$ are equal over $\cH_{g,n/k}[r]$ and hence over $\cH_{g,n/k}$. This completes the proof of Theorem 1.

\end{proof}
\begin{proof}[Proof of Theorem 2]
	By the universal property of weighted completion, each splitting of the extension 
	$$1\to \pi_1(C_\etabar, \bar x)\to \pi_1(\cC_{\cH_{g/k}}, \bar x)\to \pi_1(\cH_{g/k}, \etabar)\to 1$$ induces a $\D^{\cC}_{g}$-module section of the exact sequence
	$$0\to \p\to \mathfrak{v}^\cC_{g}\to \mathfrak{v}_{g}\to 0$$ and hence an $R$-invariant Lie algebra section of $\beta_0:\h_{g,1}\to \h_{g}$. By Proposition \ref{induced sections}, there is no such section for $\beta_0$. Therefore, we are done. 	
	\end{proof}
	\begin{corollary}
		Let $k$ be a field of characteristic zero and let $\ell$ be a prime number. If the $\ell$-adic cyclotomic character $\chi_\ell:G_k\to \Z^\times_\ell$ is infinite and if $g\geq 3$, then the section conjecture holds for the universal hyperelliptic curve 
		$\cC_{\cH_{g/k}}\to \cH_{g/k}$.  \qed
	\end{corollary}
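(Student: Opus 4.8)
The plan is to derive the corollary directly from Theorems 1 and 2 by checking that, in the relative setting over the \emph{unpointed} stack $\cH_{g/k}$, both sides of the section conjecture are empty. In this relative form the conjecture asserts that the map sending a section $s$ of $f\colon \cC_{\cH_{g/k}}\to \cH_{g/k}$ to the $\pi_1(C_\etabar,\bar x)$-conjugacy class $[s_\ast]$ of the group-theoretic section of
$$1\to \pi_1(C_\etabar, \bar x)\to \pi_1(\cC_{\cH_{g/k}}, \bar x)\to \pi_1(\cH_{g/k}, \etabar)\to 1$$
that it induces is a bijection onto the set of all such conjugacy classes. This map is well defined by the usual homotopy-exact-sequence argument, a section of $f$ determining a group-theoretic section up to conjugation by $\pi_1(C_\etabar,\bar x)$; so the content of the conjecture is precisely its bijectivity.

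The first step is to observe that the target is empty. This is exactly Theorem 2: the displayed sequence does not split, so it has no group-theoretic section at all, and hence no conjugacy class of such sections. The second step is to observe that the source is empty. A section of $f$ would, by Proposition \ref{induced sections} in the case $n=0$, induce an $R$-invariant Lie algebra section of $\beta_0\colon \h_{g,1}\to \h_g$; but Theorem \ref{sections of lie algebras} states that $\beta_0$ admits no $R$-invariant Lie algebra section. Thus $f$ has no sections. Equivalently, this is the $n=0$ instance of Theorem 1: the sections are precisely the tautological ones and their hyperelliptic conjugates, of which there are none once there are no marked points.

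With both the source and the target empty, the comparison map is a bijection of empty sets, which is exactly the statement that the section conjecture holds for $\cC_{\cH_{g/k}}\to \cH_{g/k}$.

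I do not expect a genuine obstacle, as the corollary is a formal consequence of the two main theorems under the same hypotheses (characteristic zero, $\chi_\ell$ of infinite image, and $g\geq 3$). The only point meriting care is the well-definedness and naturality of the comparison map $s\mapsto[s_\ast]$, so that ``bijection between empty sets'' is indeed the correct reading of the conjecture in this relative context; everything else is immediate from Theorems 1 and 2.
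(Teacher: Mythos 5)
Your proposal is correct and matches the paper's intended (and essentially immediate) argument: the corollary holds vacuously because Theorem 2 shows there are no group-theoretic sections and the $n=0$ case of Theorem 1 (equivalently, Proposition \ref{induced sections} plus Theorem \ref{sections of lie algebras}) shows there are no geometric sections. The only small redundancy is that emptiness of the source already follows from emptiness of the target, since any geometric section would induce a splitting of the fundamental group sequence.
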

\section{Non-abelian cohomology of $\pi_1(\cH_{g,n/k})$}
In this section, first we will briefly review the non-abelian cohomology of proalgebraic groups developed in \cite{hain4} and then compute the non-abelian cohomology of $\Delta^\arith_{g,n}:=\pi_1(\cH_{g,n/k},\etabar)$ with the coefficient group given by the continuous unipotent completion $\cP$ of $\pi_1(C, \bar x)$ over $\Ql$, where $C$ is the fiber of the universal family over $\etabar$. 
\subsection{Non-abelian cohomology schemes $H^1_\nab(\cG, \cN)$}
\indent Let $F$ be a field of characteristic zero. Let $S$ be a connected reductive group over $F$. We equip $S$ with a nontrivial central cocharacter $\omega:\Gm\to S$. We consider the following extension of proalgebraic groups:
$$1\to \cN\to \E\to\cG\to1.$$
Here $\cG$ is a negatively weighted extension of $S$ by a prounipotent $F$-group $\U$ and  $\E$ is an extension of $\cG$ by a unipotent $F$-group $\cN$. We observe that $\E$ is an extension of $S$ by a prounipotent $F$-group $\cV$, which we assume to be negatively weighted, i.e., the action of $\Gm$ on $H_1(\cV)$ has only negative weights. There is a commutative diagram of  prounipotent $F$-groups
$$\xymatrix{
	1\ar[r]&\cN\ar[r]\ar@{=}[d]&\cV\ar[r]\ar[d]&\U\ar[r]\ar[d]&1\\
	        1\ar[r]&\cN\ar[r]& \E\ar[r]&\cG\ar[r]&1,
	    }
$$
where the rows are exact.

Since $\E$ acts on the Lie algebra $\n$ of $\cN$ by adjoint action, the nilpotent Lie algebra $\n$ admits a natural weight filtration $W_\bullet \n$. The assumption on $\cV$ implies that we have $\n=W_{-1}\n$. We choose a lift $\hat{\omega}$ of $\omega$ to $\E$. The composition of $\hat \omega$ with $\E\to \cG$ is a lift $\tilde \omega$ of $\omega$ to $\cG$. Denote the Lie algebras of $\E$ and $\cG$ by $\e$ and $\g$, respectively. We consider the set of the sections of $\E\to\cG$. A section $s$ of $\E\to\cG$ is said to be $\hat \omega$-graded if $s\circ \tilde \omega=\hat \omega$. Let $A$ be an $F$-algebra.  Each section $s$ of $\E\otimes_FA\to \cG\otimes_FA$ induces a section $\Gr^W_\bullet ds$ of the associated graded Lie algebra surjection $\Gr^W_\bullet\e\otimes_FA\to \Gr^W_\bullet \g\otimes_FA$.  By \cite[Prop. 4.3]{hain4}, the functor taking $A$ to the set $\{\text{$S$-invariant graded sections of }\Gr^W_\bullet\e\otimes_FA\to \Gr^W_\bullet \g\otimes_FA\}$ is representable by an ind-affine scheme $\mathrm{Sect}^S_\bullet(\Gr^W_\bullet\g, \Gr^W_\bullet \n)$. The following result of Hain follows from the key fact that each $\cN$-conjugacy class of a section of $\E \to\cG$ contains a unique $\hat \omega$-graded section. 
\begin{theorem}[{\cite[Thm. 4.6, Cor. 4.7]{hain4}}]\label{nonab iso}There is an ind-affine scheme $H^1_\nab(\cG, \cN)$ that represents  the functor taking an $F$-algebra $A$ to the set of 
$\cN(A)$-conjugacy classes of sections of $\E \otimes_FA\to \cG\otimes_FA$.
If $H^1(\g, \Gr^W_m\n)$ is finite dimensional for all $m\in \Z$, then it is of finite type. Furthermore, there is an isomorphism of $F$-schemes $$H^1_\nab(\cG, \cN)\cong \mathrm{Sect}^S_\bullet(\Gr^W_\bullet\g, \Gr^W_\bullet \n).$$
\end{theorem}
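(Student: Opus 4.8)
The plan is to deduce the entire statement from the structural fact singled out just above it: inside every $\cN(A)$-conjugacy class of sections of $\E\otimes_F A\to\cG\otimes_F A$ there is exactly one $\hat\omega$-graded section. Granting this, representability and the comparison isomorphism become formal consequences of \cite[Prop.~4.3]{hain4}. First I would record that the assignment sending a section $s$ to the induced Lie algebra section $\Gr^W_\bullet ds$ of $\Gr^W_\bullet\e\otimes_F A\to\Gr^W_\bullet\g\otimes_F A$ is constant on $\cN(A)$-conjugacy classes, because conjugation by $\cN(A)$ acts trivially on the associated graded. Restricting this assignment to $\hat\omega$-graded sections, I claim it is a bijection onto the $S$-invariant graded Lie algebra sections. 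Indeed, an $\hat\omega$-graded section is $\Gm$-equivariant for the actions through $\tilde\omega$ and $\hat\omega$, so its differential respects the weight grading and descends to an $S$-invariant section of $\Gr^W_\bullet\e\to\Gr^W_\bullet\g$; injectivity holds because the prounipotent directions of $\cG$ and $\E$ are recovered from their Lie algebras by $\exp$ while the reductive direction is pinned down by $s\circ\tilde\omega=\hat\omega$, and for surjectivity one integrates a given graded Lie algebra section, exponentiating on the prounipotent factors and using $\hat\omega$ to lift the reductive quotient.

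Combining these two inputs proves the first and last assertions at once. The key uniqueness fact produces a bijection, natural in $A$, between $\cN(A)$-conjugacy classes of sections and $\hat\omega$-graded sections, and the previous paragraph identifies the latter functorially with the $S$-invariant graded Lie algebra sections, that is, with $\mathrm{Sect}^S_\bullet(\Gr^W_\bullet\g,\Gr^W_\bullet\n)(A)$. Hence the functor $A\mapsto\{\cN(A)\text{-conjugacy classes of sections}\}$ is isomorphic to the functor represented by the ind-affine scheme $\mathrm{Sect}^S_\bullet(\Gr^W_\bullet\g,\Gr^W_\bullet\n)$ of \cite[Prop.~4.3]{hain4}. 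This simultaneously supplies the representing ind-affine scheme $H^1_\nab(\cG,\cN)$ and the isomorphism $H^1_\nab(\cG,\cN)\cong\mathrm{Sect}^S_\bullet(\Gr^W_\bullet\g,\Gr^W_\bullet\n)$.

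For the finite-type claim I would filter by weight and argue inductively on the scheme of graded Lie algebra sections. Passing from sections modulo $W_{-m}\n$ to sections modulo $W_{-m-1}\n$ is governed by the linear datum $\Gr^W_{-m}\n$: the locus of liftable sections is cut out by an obstruction in $H^2(\g,\Gr^W_{-m}\n)$, and over it the fiber is a torsor under $H^1(\g,\Gr^W_{-m}\n)$. Thus finite dimensionality of every $H^1(\g,\Gr^W_m\n)$ forces each stage to add only finitely many coordinates; since $\n=W_{-1}\n$ is negatively weighted the relevant stages are exactly these, and $H^1_\nab(\cG,\cN)$ is of finite type.

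The hard part is the $\hat\omega$-graded representative fact that drives the whole argument, and its proof is where the negative-weight hypothesis $\n=W_{-1}\n$ is indispensable. For uniqueness, any $n\in\cN(A)$ carrying one graded section to another must commute with the image of $\hat\omega$, hence lie in the weight-zero part of $\cN$, which is trivial because $\n$ has strictly negative weights. For existence, one flows an arbitrary section by $a\mapsto\hat\omega(a)\,s\bigl(\tilde\omega(a)^{-1}(-)\tilde\omega(a)\bigr)\,\hat\omega(a)^{-1}$ and takes the limit as $a\to0$; the negativity of all weights on $\n$ guarantees that this limit exists, is $\hat\omega$-graded, and is $\cN$-conjugate to the original section.
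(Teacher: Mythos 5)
Your proposal is correct and follows exactly the route the paper indicates: the paper does not prove this result but cites \cite[Thm.~4.6, Cor.~4.7]{hain4}, flagging as the key input precisely the fact you build everything on, namely that each $\cN(A)$-conjugacy class of sections of $\E\otimes_FA\to\cG\otimes_FA$ contains a unique $\hat\omega$-graded section. Your reduction of representability and the comparison isomorphism to \cite[Prop.~4.3]{hain4}, and of the finite-type claim to the weight-filtration tower controlled by $H^1(\g,\Gr^W_m\n)$, is a faithful reconstruction of Hain's argument.
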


When $\cN$ is prounipotent, assuming that each of the weight graded quotients $\Gr^W_\bullet H_1(\cN)$ is finite dimensional and $\mathrm{dim}~H^1(\g, \Gr^W_{m}\n)<\infty$ for all $m\in \Z$, we consider the  affine $F$-scheme $H^1_\nab(\cG, \cN)=\varprojlim H^1_\nab(\cG,\cN/W_m)$. It represents the functor in our interest.  In our case, we have $\cN=\cP$ and $\cG=\D_{g,n}$. In order to compute $H^1_\nab(\D_{g,n}, \cP)$, we use the following ``exact sequence" of non-abelian cohomology of proalgebraic groups. 
\begin{proposition}[{\cite[Prop. 4.8]{hain4}}] \label{exact seq} Let $N>1$. Assume that the finiteness condition $\mathrm{dim}~H^1(\g, \Gr^W_{-l}\n)<\infty$ for all $1\leq l \leq N$ holds. Then 
	\begin{enumerate}
	\item 	there exists a morphism of affine $F$-schemes 
	$$H^1_\nab(\cG, \cN/W_{-l})\overset{\delta}\to H^2(\g, \Gr^W_{-l}\n),$$
	\item   there exists a principal action of $H^1(\g, \Gr^W_{-l}\n)$ on $H^1_\nab(\cG, \cN/W_{-l-1})$, 
	\item 	for all $F$-algebras $A$, the set $H^1_\nab(\cG, \cN/W_{-l-1})(A)$ is a principal\\ $H^1(\g, \Gr^W_{-l}\n)(A)$-bundle over  $(\delta^{-1}(0))(A)$.
	\end{enumerate}
	
\end{proposition}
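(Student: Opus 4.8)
The plan is to read the three assertions as the output of the classical obstruction theory for lifting a section along a group extension with abelian kernel, performed one weight layer at a time and made functorial in the test algebra $A$. Write $p\colon H^1_\nab(\cG,\cN/W_{-l-1})\to H^1_\nab(\cG,\cN/W_{-l})$ for the map induced by the truncation $\cN/W_{-l-1}\to\cN/W_{-l}$. The single structural input is that passing from the $l$-th to the $(l+1)$-st truncation adjoins only the graded layer $\Gr^W_{-l}\n$, and that this layer is abelian with the conjugation action of $\E$ factoring through $S$.

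First I would record the relevant extension. Since $W_\bullet$ is a Lie filtration and $\n=W_{-1}\n$, we have $[\n,W_{-l}\n]\subseteq W_{-l-1}\n$, so $\Gr^W_{-l}\n$ is central in $\cN/W_{-l-1}$, and since the weight filtration makes each graded quotient an $S$-module its $\E$-conjugation action factors through $S$. Thus
$$1\to\Gr^W_{-l}\n\to\E/W_{-l-1}\cN\to\E/W_{-l}\cN\to1$$
is an extension of groups over $\cG$ whose kernel is an abelian $\cG$-module, pure of weight $-l$. Lifting a section of $\E/W_{-l}\cN\to\cG$ to one of $\E/W_{-l-1}\cN\to\cG$ is exactly the problem this extension controls. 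To construct $\delta$, take an $A$-point of $H^1_\nab(\cG,\cN/W_{-l})$; by Theorem \ref{nonab iso} it is represented by a unique $\hat\omega$-graded section, equivalently an $S$-invariant graded Lie algebra section $s$ of $\Gr^W_\bullet(\e/W_{-l}\cN)\otimes_F A\to\Gr^W_\bullet\g\otimes_F A$. Choose an $S$-equivariant graded $A$-linear lift $\hat s$ of $s$ into $\Gr^W_\bullet(\e/W_{-l-1}\cN)\otimes_F A$; this exists because $S$ is reductive and the ambient modules are completely reducible. The defect
$$c_{\hat s}(x,y)=[\hat s(x),\hat s(y)]-\hat s([x,y])$$
lands in $\Gr^W_{-l}\n\otimes_F A$ since $s$ is a homomorphism modulo that layer, the Jacobi identity makes it a Chevalley--Eilenberg $2$-cocycle, and replacing $\hat s$ by another lift alters $c_{\hat s}$ by a coboundary. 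The resulting class $[c_{\hat s}]\in H^2(\g,\Gr^W_{-l}\n)(A)$ is thus independent of choices and functorial in $A$, defining the morphism $\delta$ of (i).

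For (ii) and (iii), if $\delta([s])=0$ then $c_{\hat s}$ is a coboundary, and correcting $\hat s$ accordingly yields a genuine graded section lifting $s$; hence $p$ surjects onto $(\delta^{-1}(0))(A)$. Two lifts of $[s]$ differ, on $\hat\omega$-graded representatives, by an $S$-invariant graded cochain into $\Gr^W_{-l}\n$ which the homomorphism property forces to be a $1$-cocycle, and conversely every $1$-cocycle produces a lift; moreover conjugating a lift by an element $n$ of the central layer $\Gr^W_{-l}\n$ changes this cocycle exactly by the coboundary $x\mapsto x\cdot n$. The fiber of $p$ over $[s]$ is therefore a principal homogeneous space under $Z^1/B^1=H^1(\g,\Gr^W_{-l}\n)(A)$, naturally in $A$, and the finiteness hypothesis on $H^1$ together with the representability supplied by Theorem \ref{nonab iso} promotes these functors to affine schemes, yielding the principal $H^1(\g,\Gr^W_{-l}\n)$-bundle of (iii).

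I expect the main obstacle to be twofold. The cocycle identities themselves are routine; the delicate points are, first, upgrading the set-theoretic obstruction theory to a statement about functors and affine schemes, which requires that the cocycle assignment, the action, and the projection $p$ all be morphisms of functors on $F$-algebras and that representability be inherited from Theorem \ref{nonab iso} and the finiteness of $H^1(\g,\Gr^W_{-l}\n)$; and second, the bookkeeping that pins the structure group to $H^1$ rather than the naive $Z^1$, which rests on showing that conjugation by the adjoined central layer $\Gr^W_{-l}\n$ realizes precisely the coboundaries $B^1(\g,\Gr^W_{-l}\n)$, together with the comparison in this weighted proalgebraic setting between conjugacy of group-theoretic sections and Lie-algebra cohomology $H^\bullet(\g,-)$.
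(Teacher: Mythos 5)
Your proposal is correct and follows essentially the same route as the paper, which cites this result from Hain's work and then sketches exactly this construction: $\delta$ is the obstruction for the unique $S$-module lift of a graded Lie algebra section to be a Lie algebra section, and the lifts over $\delta^{-1}(0)$ form a torsor under $H^1(\g,\Gr^W_{-l}\n)$ via $dt_\ast\mapsto dt_\ast+z$. The two delicate points you flag — representability/functoriality in $A$ via Theorem \ref{nonab iso} and the finiteness hypothesis, and the identification of the structure group as $H^1$ rather than $Z^1$ because conjugation by the central layer $\Gr^W_{-l}\n$ realizes the coboundaries — are precisely the content of the cited reference, and your treatment of them is sound.
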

These  properties can be put together into an ``exact sequence"
$$H^1(\g, \Gr^W_{-l}\n)\curvearrowright H^1_\nab(\cG, \cN/W_{-l-1})\overset{p}\to H^1_\nab(\cG, \cN/W_{-l})\overset{\delta}\to H^2(\g, \Gr^W_{-l}\n),$$
where the map $p$ is induced by the quotient $\cN/W_{-l-1}\to\cN/W_{-l}$. More precisely, a section $s$ in $H^1_\nab(\cG, \cN/W_{-l})(A)$ induces a graded section $ ds_\ast$ of 
$$0\to\Gr^W_\bullet\n/W_{-l}\n\otimes A\to \Gr^W_\bullet\e/W_{-l}\n\otimes A\to\Gr^W_\bullet\g\otimes A\to0.$$
The section $s$ lifts to a section in $H^1_\nab(\cG, \cN/W_{-l-1})(A)$ if and only if the graded section $ ds_\ast$ lifts to a section of
$$0\to\Gr^W_\bullet\n/W_{-l-1}\n\otimes A\to \Gr^W_\bullet\e/W_{-l-1}\n\otimes A\to\Gr^W_\bullet\g\otimes A\to0.$$
Composing $ ds_\ast$ with the unique $S$-module section of $$\Gr^W_\bullet\e/W_{-l-1}\n\otimes A\to\Gr^W_\bullet\e/W_{-l}\n\otimes A,$$ we obtain an $S$-module section $\tilde{ds}_\ast$ of $\Gr^W_\bullet\e/W_{-l-1}\n\otimes A\to\Gr^W_\bullet\g\otimes A$. The obstruction for this section to lift to  a Lie algebra section is given by $\delta(s)$. When $\delta(s)=0$,   $ds_\ast$ lifts to a section $dt_\ast$ and for each $z\in H^1(\g, \Gr^W_{-l}\n)$, $dt_\ast+z$ is a graded Lie algebra section of $\Gr^W_\bullet\e/W_{-l-1}\n\otimes A\to\Gr^W_\bullet\g\otimes A$ that lifts $ds_\ast$.

\subsection{Proof of Theorem 3} Our proof of Theorem 3 is essentially a modification of the proof for \cite[Thm.~3]{hain2} to the universal hyperelliptic curves. 
 Let $p\not=\ell$ be prime numbers. Suppose that $k$ is a number field or a finite extension of $\Qp$. For such a field $k$, the finiteness condition for $H^1(\d_{g,n}, \Gr^W_m\p)$ will be satisfied for all $m\in\Z$. The following result is the modification of \cite[Prop. 17.3]{hain2} to our case.
\begin{proposition}\label{cup projuct is injective}
		If $g\geq 3$ and $n\geq 1$, then the map
	$$H^1(\d_{g,n}, H)\otimes H^1(\d_{g,n}, \Gr^W_{-l}\p)\to H^2(\d_{g,n}, \Gr^W_{-l-1}\p)$$
	induced by the bracket $H\otimes\Gr^W_{-l}\p\to \Gr^W_{-l-1}\p$ is injective for each $l\geq 2$. 
\end{proposition}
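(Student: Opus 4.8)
The plan is to compute the two cohomology groups through the pronilpotent radical $\mathfrak{v}_{g,n}$ and to extract the symbol of the cup product along the Hochschild--Serre filtration of the fibration $\cH_{g,n/k}\to\cH_{g/k}$. Since $R=\GSp(H_\Ql)$ is reductive, for every finite-dimensional $R$-module $V$ (on which $\mathfrak{v}_{g,n}$ acts trivially) the Hochschild--Serre spectral sequence of the extension $0\to\mathfrak{v}_{g,n}\to\d_{g,n}\to\r\to0$ collapses to give $H^i(\d_{g,n},V)\cong H^i(\mathfrak{v}_{g,n},V)^R$; in particular $H^1(\d_{g,n},V)\cong\Hom_R(H_1(\mathfrak{v}_{g,n}),V)$. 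By Proposition \ref{weight -1} this identifies $H^1(\d_{g,n},H)=\bigoplus_{j=1}^n\Ql\kappa_j$, where $\kappa_j$ is the projection of $\Gr^W_{-1}H_1(\mathfrak{v}_{g,n})=\bigoplus_jH_j$ onto its $j$th factor. Writing a general element of the source as $\sum_j\kappa_j\otimes\beta_j$ with $\beta_j\in H^1(\d_{g,n},\Gr^W_{-l}\p)$, the claim becomes that $\sum_j\kappa_j\cup_\mu\beta_j=0$ forces every $\beta_j=0$, where $\cup_\mu$ is the cup product followed by the bracket $\mu\colon H\otimes\Gr^W_{-l}\p\to\Gr^W_{-l-1}\p$.

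First I would record two restriction statements for the fibration $0\to\p_{g,n}\to\mathfrak{v}_{g,n}\to\mathfrak{v}_g\to0$ of Proposition \ref{comm diag}. \emph{The hypothesis $l\geq2$ makes each $\beta_j$ restrict to zero on the fiber.} Indeed, as an $R$-module $H_1(\p_{g,n})$ is concentrated in the weight $-1$ piece $\bigoplus_jH_j\cong V_{[1]}$ and the weight $-2$ piece $\bigoplus_{i<j}\Ql(1)_{ij}$, whereas every irreducible constituent of $\Gr^W_{-l}\p$ has weight $-l$; tracking Tate twists, $\Hom_R(H_1(\p_{g,n}),\Gr^W_{-l}\p)=0$ for $l\geq2$, so the restriction $H^1(\d_{g,n},\Gr^W_{-l}\p)\to H^1(\p_{g,n},\Gr^W_{-l}\p)^R$ vanishes. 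Hence each $\beta_j$ lies in the first step $F^1$ of the Hochschild--Serre filtration, with symbol in $E_2^{1,0}=H^1(\mathfrak{v}_g,\Gr^W_{-l}\p)^R$. By contrast $\kappa_j$ restricts on the fiber to the $j$th coordinate class in $H^1(\p_{g,n},H)^R=\Hom_R(H_1(\p_{g,n}),H)=\bigoplus_i\Ql$, so its symbol lies in $E_2^{0,1}$.

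Now the symbol of $\kappa_j\cup_\mu\beta_j$ is the product in $E_2^{1,1}$ of these two symbols, followed by $\mu$. Unwinding the cup product on the Chevalley--Eilenberg cochain level, this symbol is the class
$$z\ \longmapsto\ \bigl(u\mapsto[u,\beta_j(z)]\bigr)\ =\ -\ad(\beta_j(z))\big|_{H}\ \in\ \Hom(H,\Gr^W_{-l-1}\p),$$
that is, the image of $\beta_j\in\bigl[H^1(\mathfrak{v}_g,\Ql)\otimes\Gr^W_{-l}\p\bigr]^R$ under $\id\otimes\ad$, landing in the $H_j$-summand of $E_2^{1,1}=\bigl[H^1(\mathfrak{v}_g,\Ql)\otimes\Hom(H_1(\p_{g,n}),\Gr^W_{-l-1}\p)\bigr]^R$. \emph{The decisive input is that $\Gr^W_\bullet\p=\mathbb{L}(H)/\langle\theta\rangle$ has trivial center} \cite{ak}: a derivation of $\Gr^W_\bullet\p$ is determined by its values on the generators $H$, so $\ad\colon\Gr^W_{-l}\p\to\Hom(H,\Gr^W_{-l-1}\p)$ is injective. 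Because $R$ is reductive, $(-)^R$ is exact, so $\id\otimes\ad$ stays injective on invariants; thus the symbol of $\kappa_j\cup_\mu\beta_j$ vanishes if and only if $\beta_j=0$. The symbols for different $j$ land in the distinct summands $\Hom_R(H_j,-)$ of $E_2^{1,1}$, and, being products of permanent cycles (hence themselves permanent cycles not in the image of any incoming differential, since $E_2^{-1,2}=0$), they survive to $E_\infty$. Therefore $\sum_j\kappa_j\cup_\mu\beta_j=0$ forces every $\ad\circ\beta_j=0$, i.e. every $\beta_j=0$, which is the asserted injectivity.

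The step I expect to be most delicate is the passage from the symbols in $E_2$ to a genuine nonvanishing statement in $H^2(\d_{g,n},\Gr^W_{-l-1}\p)$: one must set up the cup product and its compatibility with the Hochschild--Serre filtration carefully in the pro-category, verify that the symbol of a product is the product of the symbols, and confirm that the relevant classes are permanent cycles so that a nonzero $E_2^{1,1}$-symbol really detects a nonzero cohomology class. This is where the finiteness of $H^1(\d_{g,n},\Gr^W_m\p)$ for all $m$ (valid for the number fields and $p$-adic fields in question) is needed to guarantee convergence and good behaviour of the spectral sequence, and where the weight computations of Proposition \ref{even weight} and Lemma \ref{no nichi} enter to control $H^1(\mathfrak{v}_g,\Ql)$ and the base contributions. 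The vanishing of the fiber restriction for $l\geq2$ and the trivial-center input are the two facts that make the argument work, the first of which fails at $l=1$.
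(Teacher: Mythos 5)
Your argument is correct in substance, but it takes a genuinely different route from the paper's. The paper first transfers the problem to group cohomology of $\Delta^\arith_{g,n}=\pi_1(\cH_{g,n/k},\etabar)$ (using the injection $H^2(\d_{g,n},-)\hookrightarrow H^2(\Delta^\arith_{g,n},-)$ from \cite[Prop.~6.8]{hain2}), splits the coefficients as $\Gr^W_{-l}\cP=(\Gr^W_{-l}\cP)^{\Sp(H)}\oplus V$ with $V$ geometrically nontrivial, shows via Proposition \ref{even weight} and Lemma \ref{no nichi} that $H^1(\Delta^\arith_{g,n},V)=0$ and that $H^1(\Delta^\arith_{g,n},(\Gr^W_{-l}\cP)^{\Sp(H)})\cong H^1(G_k,(\Gr^W_{-l}\cP)^{\Sp(H)})$, and then realizes the cup product as the composite of the edge injection $H^1(G_k,H^1(\Delta^\geom_{g,n},H\otimes(\Gr^W_{-l}\cP)^{\Sp(H)}))\hookrightarrow H^2(\Delta^\arith_{g,n},\cdot)$ (available because the bottom row of the $G_k$-spectral sequence vanishes for geometrically nontrivial coefficients) with the injection $H\otimes(\Gr^W_{-l}\cP)^{\Sp(H)}\hookrightarrow\Gr^W_{-l-1}\cP$ coming from center-freeness. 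You instead stay entirely on the geometric side, using the spectral sequence of $0\to\p_{g,n}\to\mathfrak{v}_{g,n}\to\mathfrak{v}_g\to0$: the weight argument on $H_1(\p_{g,n})$ correctly forces $\beta_j$ into base degree for $l\geq 2$ (this is exactly where $l=1$ fails), $\kappa_j$ sits in fiber degree, and the product symbol in $E_2^{1,1}$ is detected by the injectivity of $\ad|_H\colon\Gr^W_{-l}\p\to\Hom(H,\Gr^W_{-l-1}\p)$ --- the same center-freeness input from \cite{ak}, used one step earlier. What your route buys is that it needs no arithmetic input at all (no $G_k$, no Proposition \ref{even weight}, no Tanaka vanishing) and treats $\beta_j$ as a black box; what it costs is that in your spectral sequence $E_2^{3,0}$ need not vanish, so you must invoke the multiplicative structure and the Leibniz rule to see that the product of permanent cycles survives to $E_\infty^{1,1}$, whereas in the paper's spectral sequence the whole bottom row vanishes and $E_2^{1,1}$ injects into $H^2$ outright. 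Your self-identified delicate step --- setting up the multiplicative Hochschild--Serre spectral sequence for these pronilpotent Lie algebras and checking that the symbol of a product is the product of symbols --- is standard but should be written out; once it is, the argument is complete.
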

\begin{proof} Recall that $R=\GSp(H)$. Let $S=\Sp(H)\subset R$. 
	The connectivity of $R$ implies that there is an isomorphism\\ $H^j(\d_{g,n}, V)\cong \Hom_{R}(H_j(\mathfrak{v}_{g,n}), V)$ for all finite dimensional $R$-representation $V$. Let $\Delta^\arith_{g,n}=\pi_1(\cH_{g,n/k}, \etabar)$. Consider the diagram
	$$\xymatrix{
		H^1(\d_{g,n}, H)\otimes H^1(\d_{g,n},\Gr^W_{-l}\p)\ar[d]\ar[r]& H^2(\d_{g,n}, \Gr^W_{-l-1}\p)\ar[d]\\
		H^1(\Delta^\arith_{g,n}, H)\otimes H^1(\Delta^\arith_{g,n}, \Gr^W_{-l}\cP)\ar[r]& H^2(\Delta^\arith_{g,n}, \Gr^W_{-l-1}\cP).
	}$$
	The left vertical map is an isomorphism and the one on the right is an injection by \cite[Prop. 6.8.]{hain2}. Since the above diagram commutes, it will suffice to show that the bottom horizontal map is an injection. Now, for each $l\geq 2$, we have the  $R$-decomposition $\Gr^W_{-l}\cP=(\Gr^W_{-l}\cP)^{S}\oplus V$ of $\Gr^W_{-l}\cP$ into a sum of the trivial $S$-isotypical component and an $R$-representation $V$ of weight $-l$ whose isotypical components are geometrically nontrivial. \\
	\indent Firstly, we claim that $H^1(\Delta^\arith_{g,n}, V)=0$. We have an isomorphism 
	$$H^1(\Delta^\arith_{g,n}, V)\cong\Hom_{R}(\Gr^W_{-l}H_1(\mathfrak{v}_{g,n}), V).$$ The exact sequence 
	$$0\to \p_{g,n}\to \mathfrak{v}_{g,n}\to\mathfrak{v}_g\to 0$$ induces the exact sequence 
	$$\Gr^W_{\bullet}H_1(\p_{g,n})\to \Gr^W_\bullet H_1(\mathfrak{v}_{g,n})\to \Gr^W_{\bullet} H_1(\mathfrak{v}_g)\to 0.$$ This implies that $\Gr^W_{-2}H_1(\mathfrak{v}_{g,n})= \Gr^W_{-2}H_1(\mathfrak{v}_g)$. Now, it follows from Proposition \ref{even weight} and Lemma \ref{no nichi}  that $\Hom_{R}(\Gr^W_{-l}H_1(\mathfrak{v}_{g,n}), V)=0$ for each $l\geq 2$. Thus our claim holds. \\
\indent	Secondly we claim that there is an isomorphism 
	$$H^1(\Delta^\arith_{g,n}, (\Gr^W_{-l}\cP)^{S})\cong H^1(G_k, (\Gr^W_{-l}\cP)^{S})$$ for each $l\geq 2$. Consider the exact sequence $1\to \pi_{g,n}\to \Delta^\geom_{g,n}\to \Delta^\geom_g\to 1.$  This extension gives a spectral sequence $E^{s,t}_2=H^s(\Delta^\geom_g, H^t(\pi_{g,n}, \Ql))\Rightarrow H^{s+t}(\Delta^\geom_{g,n}, \Ql)$. Hence there is an exact sequence
	$$(*)\hspace{.5in}0\to H^1(\Delta^\geom_g, H^0(\pi_{g,n}, \Ql))\to H^1(\Delta^\geom_{g,n}, \Ql)\to\hspace{2in}$$
	$$ \hspace{1in}H^0(\Delta^\geom_g, H^1(\pi_{g,n}, \Ql))\to H^2(\Delta^\geom_g, H^0(\pi_{g,n},\Ql)).$$ The first term vanishes and so does the third term, since $H^1(\pi_{g,n}, \Ql)\cong \oplus_{j=1}^nH_j$ and the monodromy image of $\Delta^\geom_g$ in $\Sp(H)$ is Zariski-dense. Therefore, we have $H^1(\Delta^\geom_{g,n}, \Ql)=0$. Our claim then follows from an easy spectral sequence argument applied to the exact sequence $1\to \Delta^\geom_{g,n}\to \Delta^\arith_{g,n}\to G_k\to 1$. \\
\indent 	Thirdly, we observe that $H^1(\Delta^\arith_{g,n}, H)=H^1(\Delta^\geom_{g,n},H)$. To see this, consider the above exact sequence $(*)$ with the coefficient group $H$ replacing $\Ql$. Since $\pi_{g,n}$ acts on $H$ trivially and the hyperelliptic involution acts on $H$ as $-\id$, the first and fourth terms vanish. Now we note that there are isomorphisms 
$$H^1(\pi_{g,n}, H)^{\Delta^\geom_g}\cong Hom_{S}(\oplus_{j=1}^nH_j, H)=\oplus_{j=1}^n\Hom_{S}(H_j, H)=\oplus_{j=1}^n\Ql.$$ This shows that  $H^1(\Delta_{g,n}^\geom, H)\cong \oplus_{j=1}^n\Ql$. We have seen that $H^1(\Delta^\arith_{g,n}, H)=\oplus_{j=1}^n\Ql\kappa_j$. Since there is an isomorphism $H^1(\Delta^\arith_{g,n}, H)=H^0(G_k, H^1(\Delta^\geom_{g,n}, H))$, our third claim holds. In particular, this shows that $H^1(\Delta^\geom_{g,n}, H)$ is a trivial $G_k$-module. \\
\indent Finally, we prove the main claim. The rest of the proof goes in the same manner as in \cite[Cor.16.4]{hain4}. For completeness, we include the argument. The center-freeness of the graded Lie algebra $\Gr^W_\bullet \cP$ implies that the commutator map $H\otimes (\Gr^W_{-l}\cP)^{S}\to \Gr^W_{-l-1}\cP$ is an $R$-invariant injection. Hence there is an injection $H^2(\Delta^\arith_{g,n}, H\otimes(\Gr^W_{-l}\cP)^{S})\to H^2(\Delta^\arith_{g,n},\Gr^W_{-l-1}\cP)$.
	Since the map 	$H^1(\Delta^\arith_{g,n}, H)\otimes H^1(\Delta^\arith_{g,n}, \Gr^W_{-l}\cP)\to H^2(\Delta^\arith_{g,n}, \Gr^W_{-l-1}\cP)$ factors through this injection, it will suffice to show that the cup product map	$$H^1(\Delta^\arith_{g,n}, H)\otimes H^1(\Delta^\arith_{g,n}, (\Gr^W_{-l}\cP)^{S})\to H^2(\Delta^\arith_{g,n}, H\otimes(\Gr^W_{-l}\cP)^{S})$$
	 is an injection. We have
	\begin{align*}
		&H^1(\Delta^\arith_{g,n}, H)\otimes H^1(\Delta^\arith_{g,n}, (\Gr^W_{-l}\cP)^{S})\\
	\cong&H^1(\Delta_{g,n}^\geom, H)\otimes H^1(G_k, (\Gr^W_{-l}\cP)^{S})\\
	\cong&H^1(G_k, H^1(\Delta^\geom_{g,n}, H)\otimes (\Gr^W_{-l}\cP)^{S})\\
	\cong&H^1(G_k, H^1(\Delta^\geom_{g,n}, H\otimes (\Gr^W_{-l}\cP)^{S}))\\
	\hookrightarrow& H^2(\Delta^\arith_{g,n},H\otimes (\Gr^W_{-l}\cP)^{S})
	\end{align*}
	The last injection follows from the fact that the bottom  row of the spectral sequence 
	$E^{s,t}=H^s(G_k,H^t(\Delta^\geom_{g,n}, V))\Rightarrow H^{s+t}(\Delta^\arith_{g,n}, V)$ vanishes for a geometrically nontrivial $R$-representation $V$. 
\end{proof}	

	Recall that a section of the universal hyperelliptic curve $\cC_{\cH_{g,n}/k}\to \cH_{g,n/k}$ induces a section  $s_x$ of $\D_{\cC_{g,n}}\to \D_{g,n}$. For each $l\geq 1$, $W_{-l}\cP$ is the $l$-th term of the lower central series of $\cP$, and so it is a normal subgroup of $\D_{\cC_{g,n}}$. Thus there is an exact sequence $1\to \cP/W_{-l-1}\cP\to \D_{\cC_{g,n}}/W_{-l-1}\cP\to \D_{g,n}\to 1$. The section $s_x$ induces a section $(s_x)_l$ of 
	$$\D_{\cC_{g,n}}/W_{-l-1}\cP\to \D_{g,n}$$ for each $l\geq1$. Recall that the sections $x_i$ are the tautological ones and the $J\circ x_i$ are their hyperelliptic conjugates. For each $i=1,\ldots,n$, denote $s_{x_i}$ by $s_i$ and $s_{J\circ x_i}$ by $s_i^\sigma$.  The following result is a reinterpretation of Theorem \ref{sections of lie algebras} in terms of the non-abelian cohomology scheme of proalgebraic groups.
	\begin{proposition}\label{nonab sections}
	 If $g\geq 3$ and $n\geq 1$, then the map 
	 $$i_4:\cC_{\cH_{g,n}/k}(\cH_{g,n/k})\to H^1_\nab(\D_{g,n}, \cP/W_{-5})(\Ql),\,\,\,\, x\mapsto (s_x)_4$$ is a bijection. 
	\end{proposition}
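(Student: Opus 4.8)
The plan is to convert the nonabelian cohomology set into a set of graded Lie algebra sections, match these against the sections of $\beta_n$ classified in Theorem \ref{sections of lie algebras}, and then conclude by a counting argument against the geometric sections supplied by Theorem 1. Since $k$ is a number field or a finite extension of $\Qp$, the finiteness condition $\dim H^1(\d_{g,n},\Gr^W_m(\p/W_{-5}\p))<\infty$ holds for every $m$, so Theorem \ref{nonab iso} applies and yields a natural bijection between $H^1_\nab(\D_{g,n},\cP/W_{-5})(\Ql)$ and the set of $R$-invariant graded Lie algebra sections of
\[\Gr^W_\bullet\bigl(\mathfrak{v}^\cC_{g,n}/W_{-5}\p\bigr)\to\Gr^W_\bullet\mathfrak{v}_{g,n}.\]
Under this bijection the class $(s_x)_4$ of a section $x$ of $\cC_{\cH_{g,n}/k}\to\cH_{g,n/k}$ corresponds to the graded section $\Gr^W_\bullet ds_{x,\ast}$ built in Proposition \ref{induced sections} (the unique $\hat{\omega}$-graded representative of its $\cP/W_{-5}$-conjugacy class). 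It therefore suffices to prove that $x\mapsto\Gr^W_\bullet ds_{x,\ast}$ is a bijection onto this set of $R$-invariant graded sections. Note that truncating at $W_{-5}$ retains exactly the weights $-1,-2,-3,-4$ of $\p$, which is precisely the range in which sections of $\beta_n$ are pinned down in Theorem \ref{sections of lie algebras}.

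First I would descend every such section to $\beta_n$. The computation in Proposition \ref{induced sections} showing that $\Gr^W_\bullet ds_{x,\ast}$ annihilates the ideal $\mathfrak{k}=\ker(\p_{g,n}\to\p^n)$ uses only that $\Gr^W_{-2}\mathfrak{k}=\bigoplus_{i<j}\Ql(1)$ maps into $\Gr^W_{-2}\p=V_{[1^2]}$, Schur's lemma via Proposition \ref{presentation}, and the fact \cite[Thm.~12.6]{hain0} that $\mathfrak{k}$ is generated as a Lie ideal by its weight $-2$ part; none of these invokes geometricity, so the same argument annihilates $\mathfrak{k}$ for an \emph{arbitrary} $R$-invariant graded section. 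Hence each element of the set above descends to an $R$-invariant Lie algebra section of $\beta_n\colon\h_{g,n+1}\to\h_{g,n}$, and for $x$ geometric this descent returns the section $\zeta_x$. By Theorem \ref{sections of lie algebras} there are exactly $2n$ sections of $\beta_n$, namely $\zeta_1^\pm,\dots,\zeta_n^\pm$, and by Theorem 1 these are precisely the $\zeta_x$ attached to the $2n$ geometric sections $x_1,\dots,x_n,J\circ x_1,\dots,J\circ x_n$; since $\zeta_{x_j}$ and $\zeta_{J\circ x_j}$ already differ on $\Gr^W_{-1}$ (by the sign of $u_j$), the assignment $x\mapsto\zeta_x$ is a bijection from the geometric sections onto $\{\zeta_j^\pm\}$.

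It remains to see that descent is injective on cohomology classes. If two $R$-invariant graded sections $\sigma,\sigma'$ descend to the same section of $\beta_n$, then their difference is an $R$-module map valued simultaneously in the coefficient algebra $\Gr^W_\bullet(\p_0/W_{-5}\p)$ (both are sections of the same projection) and in the kernel of $\Gr^W_\bullet(\mathfrak{v}^\cC_{g,n}/W_{-5}\p)\to\h_{g,n+1}$. Because the geometric fiber of $\cC_{\cH_{g,n}/k}\to\cH_{g,n/k}$ maps isomorphically onto the corresponding fiber of $\cC^{n+1}_{\cH_g/k}\to\cC^{n}_{\cH_g/k}$, the coefficient algebra $\p_0$ embeds as the $0$th factor $\p\subset\widehat{\mathfrak{v}}_{g,n+1}$, so these two submodules meet only in $0$ and $\sigma=\sigma'$. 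Now assemble: $i_4$ is injective, since distinct geometric sections have distinct $\Gr^W_{-1}$-data and that datum is a conjugacy invariant; hence $H^1_\nab(\D_{g,n},\cP/W_{-5})(\Ql)$ has at least $2n$ elements. Descent embeds that set into the $2n$-element set of sections of $\beta_n$, so it has at most $2n$ elements. Therefore it has exactly $2n$, and $i_4$ is an injection between two sets of size $2n$, hence a bijection.

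The step I expect to be the main obstacle is the injectivity of descent, i.e.\ recovering a nonabelian cohomology class from its image section of $\beta_n$. This reduces to the embedding $\p_0\hookrightarrow\widehat{\mathfrak{v}}_{g,n+1}$, which expresses that passing from the universal curve $\cC_{\cH_{g,n}/k}$ to the fibered power $\cC^{n+1}_{\cH_g/k}$ does not collapse the fiber; once this is in hand, the intersection of the two relevant submodules is forced to vanish. The remaining ingredients --- the applicability of Theorem \ref{nonab iso}, the validity of the Schur-type descent for non-geometric sections, and the separation of the tautological sections from their hyperelliptic conjugates on $\Gr^W_{-1}$ --- are either cited from earlier results or routine.
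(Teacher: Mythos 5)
Your proposal is correct and follows essentially the same route as the paper: Theorem \ref{nonab iso} converts $H^1_\nab(\D_{g,n},\cP/W_{-5})(\Ql)$ into the set of $R$-invariant graded Lie algebra sections, Theorem \ref{sections of lie algebras} identifies these with the $2n$ sections $\zeta_1^\pm,\ldots,\zeta_n^\pm$, and Theorem 1 matches them bijectively with the $2n$ geometric sections via their $\Gr^W_{-1}$-data. The only difference is that you spell out the descent from sections over $\Gr^W_\bullet\mathfrak{v}_{g,n}$ to sections of $\beta_n$ and verify its injectivity, a step the paper's short proof leaves implicit; your justification of both points is sound.
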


\begin{proof}
By Theorem 1, the sections of $\cC_{\cH_{g,n}/k} \to \cH_{g,n/k}$ are the tautological ones $x_1, \ldots, x_n$ and their hyperelliptic conjugates $J\circ x_1, \ldots, J\circ x_n$. By Theorem \ref{nonab iso}, there is a bijection $H^1_\nab(\D_{g,n}, \cP/W_{-5})(\Ql)\cong \Sect(\Gr^W_\bullet\mathfrak{v}_{g,n}, \Gr^W_\bullet(\p/W_{-5}))(\Ql)$. It then follows from Theorem \ref{sections of lie algebras} that $\Sect(\Gr^W_\bullet\mathfrak{v}_{g,n}, \Gr^W_\bullet(\p/W_{-5}))(\Ql)$ are given by the sections $\zeta_1^\pm, \ldots, \zeta_n^\pm.$
The derivative map $\Gr^W_\bullet d$ takes $(s_i)_4$ to $\zeta_i^{+}$ and $(s_{i}^\sigma)_4$ to $\zeta^{-}_i$ for each $i$. 
	
\end{proof}	
We need the analogue of the results \cite[Prop. 15.1 \& 15.2]{hain2}. The same proofs work for our case as well.
\begin{proposition}
	For each $\Ql$-algebra $A$, there is an exact sequence 
	$$1\to \cP(A)\to \Delta_{\cC}^A\to \Delta^\arith_{g,n}\to 1$$
	that is a pullback of the exact sequence
	$$1\to \cP(A)\to \D_{\cC_{g,n}}(A)\to \D_{g,n}(A)\to 1$$
	along the representation $\Delta^\arith_{g,n}\to \D_{g,n}(A)$. \qed
	\end{proposition}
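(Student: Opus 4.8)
The plan is to take the fiber product as the \emph{definition} of $\Delta_\cC^A$ and then to check exactness, the only non-formal ingredient being the right-exactness of the proalgebraic extension after evaluation on $A$-points. First I would produce the representation along which the pullback is formed: by the universal property of weighted completion there is a canonical continuous homomorphism $\pi_1(\cH_{g,n/k},\etabar)\to\D_{g,n}(\Ql)$, and postcomposing with the map $\D_{g,n}(\Ql)\to\D_{g,n}(A)$ induced by the structure morphism $\Ql\hookrightarrow A$ yields $\rho_A\colon\Delta^\arith_{g,n}\to\D_{g,n}(A)$. I then set
$$\Delta_\cC^A:=\Delta^\arith_{g,n}\times_{\D_{g,n}(A)}\D_{\cC_{g,n}}(A),$$
so that what must be shown is that the first projection $\Delta_\cC^A\to\Delta^\arith_{g,n}$ is surjective with kernel $\cP(A)$.

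The key step is to show that for every $\Ql$-algebra $A$ the sequence of $A$-points
$$1\to\cP(A)\to\D_{\cC_{g,n}}(A)\to\D_{g,n}(A)\to1$$
is exact. Left exactness is immediate from exactness of $1\to\cP\to\D_{\cC_{g,n}}\to\D_{g,n}\to1$ as proalgebraic $\Ql$-groups. For surjectivity I would use that $\cP$ is prounipotent over the characteristic-zero field $\Ql$: writing $\cP=\varprojlim_l\cP/W_{-l}\cP$, each $\cP/W_{-l}\cP$ is, as a scheme, a successive extension of copies of $\mathbb{G}_a$, so the quotient morphism $\D_{\cC_{g,n}}\to\D_{g,n}$ is a $\cP$-torsor over the (pro-)affine base $\D_{g,n}$. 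Since $\mathbb{G}_a$-torsors over an affine scheme are trivial, and such triviality can be lifted compatibly through the lower central tower (the obstructions vanishing levelwise and the section spaces having surjective transition maps), the torsor admits a scheme-theoretic section; evaluating it on $A$-points gives surjectivity of $\D_{\cC_{g,n}}(A)\to\D_{g,n}(A)$ for all $A$. This is the analogue of \cite[Prop.~15.1]{hain2}, and it is where the real content sits.

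Finally I would run the standard pullback bookkeeping, as in \cite[Prop.~15.2]{hain2}. Given $\gamma\in\Delta^\arith_{g,n}$, the surjectivity just proved lifts $\rho_A(\gamma)$ to some $g\in\D_{\cC_{g,n}}(A)$, and $(\gamma,g)\in\Delta_\cC^A$ maps to $\gamma$; hence $\Delta_\cC^A\to\Delta^\arith_{g,n}$ is surjective. A pair $(\gamma,g)$ lies in the kernel exactly when $\gamma=1$ and $g\in\ker(\D_{\cC_{g,n}}(A)\to\D_{g,n}(A))=\cP(A)$, which identifies the kernel canonically with $\cP(A)$; its normality is inherited from that of $\cP(A)$ in $\D_{\cC_{g,n}}(A)$. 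The commuting square exhibiting $\Delta_\cC^A$ as the pullback along $\rho_A$ is built into the definition, so the sequence is of the asserted form.

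The \emph{main obstacle} is the surjectivity established in the second paragraph: right-exactness of a sequence of affine group schemes need not survive passage to $A$-points in general, and it is precisely the prounipotence of $\cP$ in characteristic zero, i.e.\ the triviality of $\cP$-torsors over affine bases, that makes it hold here. Everything after that point is formal manipulation of group extensions and fiber products.
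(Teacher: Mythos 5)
Your proposal is correct and follows the same route as the paper, which simply defers to the proofs of \cite[Prop.~15.1 \& 15.2]{hain2}: the content is exactly the exactness of the sequence on $A$-points (surjectivity coming from triviality of torsors under prounipotent groups over affine bases in characteristic zero, passed up the lower central tower), after which the fiber-product bookkeeping is formal. You have correctly isolated the one non-formal ingredient.
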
	
	
	For each $l\geq1$, pushing along the surjection $\cP(A)\to (\cP/W_{-l-1}\cP)(A)$, we obtain an extension 
	$$1\to (\cP/W_{-l}\cP)(A)\to \Delta^A_{\cC}/(W_{-l}\cP)(A)\to \Delta^\arith_{g,n}\to 1.$$
	Define $H^1_\nab(\Delta^\arith_{g,n}, \cP/W_{-l})(A)$ to be the set of $(\cP/W_{-l})(A)$-conjugacy classes of the sections of $\Delta^A_{\cC}/(W_{-l}\cP)(A)\to \Delta^\arith_{g,n}$.
	\begin{proposition}
		The representation $\Delta^\arith_{g,n}\to \D_{g,n}(\Ql)$ induces a bijection
		$$H^1_\nab(\D_{g,n}, \cP/W_{-l})(\Ql)\to H^1_\nab(\Delta^\arith_{g,n}, \cP/W_{-l})(\Ql).$$ 
		\qed
	\end{proposition}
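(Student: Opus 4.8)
The plan is to identify the map of the proposition with restriction along the canonical homomorphism $\rho\colon\Delta^\arith_{g,n}\to\D_{g,n}(\Ql)$ and then to invert it using the universal property of weighted completion together with the Zariski density of $\rho(\Delta^\arith_{g,n})$ in $\D_{g,n}$. By construction the arithmetic extension $1\to(\cP/W_{-l})(\Ql)\to\Delta^{\Ql}_{\cC}/(W_{-l}\cP)(\Ql)\to\Delta^\arith_{g,n}\to1$ is the pullback along $\rho$ of the proalgebraic extension $1\to(\cP/W_{-l})(\Ql)\to(\D_{\cC_{g,n}}/W_{-l}\cP)(\Ql)\to\D_{g,n}(\Ql)\to1$. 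Hence every proalgebraic section $\sigma\colon\D_{g,n}\to\D_{\cC_{g,n}}/W_{-l}\cP$ restricts, via $\rho$, to a section of the arithmetic extension, and this restriction intertwines $(\cP/W_{-l})(\Ql)$-conjugation; this is the induced map on conjugacy classes. It remains to prove surjectivity and injectivity, and both reduce to density.

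For surjectivity I would take a continuous section $s$ of the arithmetic extension and compose it with the inclusion $\Delta^{\Ql}_{\cC}/(W_{-l}\cP)(\Ql)\hookrightarrow(\D_{\cC_{g,n}}/W_{-l}\cP)(\Ql)$ to get a continuous homomorphism $\tilde s\colon\Delta^\arith_{g,n}\to(\D_{\cC_{g,n}}/W_{-l}\cP)(\Ql)$ lifting the monodromy representation to $R=\GSp(H_\Ql)$. The target is a negatively weighted extension of $R$, being the quotient of the weighted completion $\D_{\cC_{g,n}}$ by the normal subgroup $W_{-l}\cP$, which lies in strictly negative weights, so the $H_1$ of its prounipotent radical still has only negative weights. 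Passing to the Zariski closure $G$ of the image of $\tilde s$ makes $\tilde s$ Zariski-dense; since $G$ surjects onto $R$ (the composite to $R$ is the Zariski-dense monodromy) and is itself a negatively weighted extension of $R$ lifting monodromy, the universal property of the weighted completion $\D_{g,n}$ of $\pi_1(\cH_{g,n/k})$ yields a unique morphism of proalgebraic groups $\sigma\colon\D_{g,n}\to G\hookrightarrow\D_{\cC_{g,n}}/W_{-l}\cP$ with $\sigma\circ\rho=\tilde s$. That $\sigma$ is a section follows by uniqueness: the projection of $\sigma$ to $\D_{g,n}$ and the identity both restrict to $\rho$ on the Zariski-dense subgroup $\rho(\Delta^\arith_{g,n})$, hence coincide. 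Restricting $\sigma$ recovers $s$.

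Injectivity is again a density argument. If proalgebraic sections $\sigma_1,\sigma_2$ have $(\cP/W_{-l})(\Ql)$-conjugate restrictions, say $\sigma_1\circ\rho=u(\sigma_2\circ\rho)u^{-1}$ for a constant $u\in(\cP/W_{-l})(\Ql)$, then the two morphisms of proalgebraic groups $\sigma_1$ and $u\sigma_2u^{-1}$ agree after precomposition with $\rho$, that is, on the Zariski-dense image $\rho(\Delta^\arith_{g,n})$; since a homomorphism of proalgebraic groups is determined by its values on a Zariski-dense subset, $\sigma_1=u\sigma_2u^{-1}$, so $[\sigma_1]=[\sigma_2]$. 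Together with surjectivity this gives the asserted bijection.

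The step I expect to be the main obstacle is the application of the universal property of weighted completion, since a section is never Zariski-dense in the total group; the resolution is to replace the target by the Zariski closure of the image of $\tilde s$ and to verify that this closure is again a negatively weighted extension of $R$ lifting the monodromy, after which the factorization and the section property are formal. This is exactly the mechanism of \cite[Prop.~15.2]{hain2}, so the same proof transfers to the hyperelliptic setting.
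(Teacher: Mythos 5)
Your proof is correct and takes essentially the same approach as the paper, which offers no argument of its own beyond asserting that the proofs of \cite[Prop.~15.1 \& 15.2]{hain2} carry over. The details you supply --- identifying the arithmetic extension as the pullback along $\rho$, passing to the Zariski closure of the image of $\tilde s$ and invoking the universal property of weighted completion for surjectivity, and Zariski density for injectivity --- are exactly the mechanism of Hain's Proposition 15.2 that the paper implicitly relies on.
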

	Now we have the following key fact that is our analogue of \cite[Prop.~18.5]{hain2} whose  proof can be easily modified for our case with Propositions \ref{cup projuct is injective} and \ref{nonab sections} along induction using Proposition \ref{exact seq}.
	\begin{proposition}Let $l\geq 4$.
		If $g\geq 3$, then there are natural isomorphisms
		
		$$H^1_\nab(\Delta^\arith_{g,n}, \cP/W_{-l-1})\cong H^1_\nab(\D_{g,n}, \cP/W_{-l-1})
		                                    $$
		                                    $$\cong
		                                      \left\{
		                                    \begin{array}{ll}
		                                    \{s_1, s^\sigma_1, \ldots, s_n,s^\sigma_n\}\times \Spec\Ql & l \text{ odd} \\
		                                    \{s_1, s^\sigma_1, \ldots, s_n,s^\sigma_n\}\times H^1(G_k,(\Gr^W_{-l}\cP)^{\Sp(H)}) & l \text{ even}
		                                    \end{array}
		                                    \right.
		                                   	   $$
    where $(s_i)_l=s_i$ for $l$ odd and $(s_i)_l= (s_i, 0)$ for $l$ even. \qed
		                                   	    
	\end{proposition}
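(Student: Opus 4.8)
The plan is to first reduce to the weighted–completion side and then to compute $H^1_\nab(\D_{g,n},\cP/W_{-l-1})$ by induction on $l$. The first isomorphism $H^1_\nab(\Delta^\arith_{g,n},\cP/W_{-l-1})\cong H^1_\nab(\D_{g,n},\cP/W_{-l-1})$ is the bijection induced by the representation $\Delta^\arith_{g,n}\to\D_{g,n}(\Ql)$ from the preceding proposition, so it remains to identify the right–hand side. The base case is $l=4$: by Proposition \ref{nonab sections} the map $x\mapsto (s_x)_4$ identifies $H^1_\nab(\D_{g,n},\cP/W_{-5})(\Ql)$ with the $2n$ sections $s_1,s_1^\sigma,\dots,s_n,s_n^\sigma$, which is the asserted answer for the even value $l=4$ once one notes $(\Gr^W_{-4}\cP)^{\Sp(H)}=0$ (by Proposition \ref{presentation} no copy of $\Ql(2)$ occurs in $\p(4)$), so that $H^1(G_k,(\Gr^W_{-4}\cP)^{\Sp(H)})=\Spec\Ql$. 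For the inductive step I would run the ``exact sequence'' of Proposition \ref{exact seq},
$$H^1(\d_{g,n},\Gr^W_{-l}\p)\curvearrowright H^1_\nab(\D_{g,n},\cP/W_{-l-1})\overset{p}\to H^1_\nab(\D_{g,n},\cP/W_{-l})\overset{\delta}\to H^2(\d_{g,n},\Gr^W_{-l}\p),$$
passing from level $-l$ to level $-l-1$.

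The first technical input is the acting group $H^1(\d_{g,n},\Gr^W_{-l}\p)$. I would decompose $\Gr^W_{-l}\p=(\Gr^W_{-l}\cP)^{\Sp(H)}\oplus V$ into its $\Sp(H)$-trivial isotypical part and a geometrically nontrivial complement $V$. Exactly as in the proof of Proposition \ref{cup projuct is injective} (via Proposition \ref{even weight} and Lemma \ref{no nichi}), one gets $H^1(\d_{g,n},V)=0$ for $l\geq 2$, while $H^1(\d_{g,n},(\Gr^W_{-l}\cP)^{\Sp(H)})\cong H^1(G_k,(\Gr^W_{-l}\cP)^{\Sp(H)})$. Since every $\Sp(H)$-trivial $R$-representation is a Tate twist $\Ql(m)$ of even weight $-2m$, we have $(\Gr^W_{-l}\cP)^{\Sp(H)}=0$ for $l$ odd. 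Hence $H^1(\d_{g,n},\Gr^W_{-l}\p)$ vanishes for $l$ odd and equals $H^1(G_k,(\Gr^W_{-l}\cP)^{\Sp(H)})$ for $l$ even; this parity is exactly what produces the two regimes in the statement.

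With these groups in hand the induction splits into two cases. When $l$ is even, the inductive hypothesis gives $H^1_\nab(\D_{g,n},\cP/W_{-l})=\{s_1,\dots,s_n^\sigma\}$; each of these $2n$ sections comes from an honest section of $\cC_{\cH_{g,n}/k}\to\cH_{g,n/k}$ (Theorem 1) and so lifts to all orders, whence $\delta\equiv 0$ on $H^1_\nab(\D_{g,n},\cP/W_{-l})$. The fibre of $p$ over each $s_i$ (resp.\ $s_i^\sigma$) is then a principal $H^1(\d_{g,n},\Gr^W_{-l}\p)=H^1(G_k,(\Gr^W_{-l}\cP)^{\Sp(H)})$-bundle, trivialized by the canonical geometric lift, giving the product $\{s_1,\dots,s_n^\sigma\}\times H^1(G_k,(\Gr^W_{-l}\cP)^{\Sp(H)})$. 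When $l$ is odd the acting group vanishes, so $p$ restricts to a bijection from $H^1_\nab(\D_{g,n},\cP/W_{-l-1})$ onto $\delta^{-1}(0)\subseteq H^1_\nab(\D_{g,n},\cP/W_{-l})$, and the inductive hypothesis presents the latter as $\{s_1,\dots,s_n^\sigma\}\times H^1(G_k,(\Gr^W_{-(l-1)}\cP)^{\Sp(H)})$. Here I must show that $\delta^{-1}(0)$ consists of only the $2n$ canonical points, i.e.\ that a nonzero deformation $z\in H^1(G_k,(\Gr^W_{-(l-1)}\cP)^{\Sp(H)})$ of $s_i$ fails to lift.

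The heart of the argument, and the step I expect to be the main obstacle, is the identification of this obstruction. Since $s_i$ itself lifts ($\delta(s_i)=0$) and the fibre direction introduced at the previous even level is $z$, the leading term of $\delta(s_i+z)$ is the image under the bracket $H\otimes\Gr^W_{-(l-1)}\p\to\Gr^W_{-l}\p$ of the cup product $\kappa_i\cup z$, where $\kappa_i\in H^1(\d_{g,n},H)$ is a nonzero multiple of the characteristic class of $s_i$. Proposition \ref{cup projuct is injective} asserts precisely that
$$H^1(\d_{g,n},H)\otimes H^1(\d_{g,n},\Gr^W_{-(l-1)}\p)\to H^2(\d_{g,n},\Gr^W_{-l}\p)$$
is injective, so $\kappa_i\cup z\neq 0$ whenever $z\neq 0$; hence $s_i+z$ does not lift, $\delta^{-1}(0)$ collapses to the $2n$ canonical sections, and $H^1_\nab(\D_{g,n},\cP/W_{-l-1})=\{s_1,\dots,s_n^\sigma\}\times\Spec\Ql$. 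Making rigorous the identification of $\delta(s_i+z)$ with $\kappa_i\cup z$—tracking the non-abelian coboundary of Proposition \ref{exact seq} through the graded Lie algebra description of Theorem \ref{nonab iso} and verifying that no lower-order terms interfere with the leading bracket—is the delicate point; once it is in place, assembling the two cases completes the induction and hence the proof.
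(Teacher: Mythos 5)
Your proposal is correct and follows essentially the same route the paper intends: the paper gives no written proof, only the remark that Hain's \cite[Prop.~18.5]{hain2} is to be adapted by induction using Proposition \ref{exact seq}, with base case Proposition \ref{nonab sections} and the injectivity of Proposition \ref{cup projuct is injective} to rule out lifts of the nontrivial deformations at the odd steps — which is precisely the argument you lay out, including the correct parity analysis of $(\Gr^W_{-l}\cP)^{\Sp(H)}$ and the identification of the obstruction with the cup product against $\kappa_i$.
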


Finally, since $H^1_\nab(\Delta^\arith_{g,n}, \cP)\cong \varprojlim H^1_\nab(\Delta^\arith_{g,n},  \cP/W_{-l})$, this completes the proof of Theorem 3.


\begin{thebibliography}{}
	\bibitem{Acamp}
	N.~A'Campo:
	{\em Tresses, monodromie et le groupe symplectique}, Comment. Math. Helv., 54(2): 318-327, 1979.
	\bibitem{M.A}
	M.~Anderson:
	{\em Exactness properties of pro finite completion functors}, Topology 13 (1974) 229-239.
	
	\bibitem{AV}
	A.~Arsie and A.~Vistoli:
	{\em Stacks of cyclic covers of projective spaces}, Math. 140 (2004), 647-666.
 
	\bibitem{ak}
	M.~Asada and M.~Kaneko:
	{\em On the automorphism group of some pro-$\ell$ fundamental groups in Galois Representations and Arithmetic Algebraic Geometry}, editor Y.~Ihara, Advanced Studies in Pure Mathematics 12 (1987), 137-159. MR 89h: 20053.
	
	\bibitem{BiHi}
	J.~Birman and H.~Hilden:
	{\em On the mapping class groups of closed surfaces as covering spaces}, in Advances in the Theory of Riemann surfaces, Ann. of Math. Stud. 66 (1971) 81-115.

	\bibitem{bor1}
	A.~Borel:
	{\em Stable real cohomology of arithmetic groups}, Ann. Sci. Ecole Norm. Sup. 7 (1974), 235-272.
	
	\bibitem{bor2}
	{\em Stable real cohomology of arithmetic groups II}, Manifolds and Groups, Papers in Honor of Yozo Matsushima, Progress in Mathematics 14, Birkhauser, Boston, 1981, 21-55.
	
	\bibitem{BMP}
	T.~Brendle, D.~Margalit, and A.~ Putman: 
	{\em Generators for the hyperelliptic Torelli group and the kernel of the Burau representation at $t=-1$}, Inventiones Mathematicae, 1-48, Springer, 2014.
	
	\bibitem{DGMS}
	P.~Deligne, P.~Griffiths, J.~ Morgan, D.~Sullivan: {\em Real homotopy theory of K$\ddot{\text{a}}$hler manifolds}, Invent. Math. 29 (1975), 245-274.
	
	\bibitem{DM}
	P.~Deligne and D.~Mumford:
	{\em The irreducibility of the space of curves of given  genus}, Publ. Math. Inst. Hautes \'Etudes Sci. 36 (1969) 75-109.
	
	\bibitem{ear}
	C.~Earle:
	{\em On the moduli of closed Riemann surfaces with symmetries}, Ann. of Math. Studies, No. 66, Princeton Univ. 1971,  119-130.
	
	\bibitem{EaKr}
	C.~Earle, I.~Kra: {\em On sections of some holomorphic families of closed Riemann surfaces}, Acta Math. 137 (1976), 49-79.
	
	\bibitem{ful}
	W.~Fulton and J.~Harris:
	{\em Representation theory. A first course.}, Graduate Texts in Mathematics,~129,~Springer-Verlag,~1991.

	
	\bibitem{hain0}
	R.~Hain:
	{\em Infinitestimal presentations of Torelli groups}, J.~Amer.~Math.~Soc. 10 (1997), 597-651.
	
	
	\bibitem{hain2}
	R.~Hain:
	{\em Rational points of universal curves}, J. Amer. Math. Soc. 24 (2011), 709-769.
	
	\bibitem{hain3}
	R.~Hain:
	{\em Relative weight filtrations on completions of mapping class groups}, in Groups of Diffeomorphisms, Advanced Studies in Pure Mathematics, vol. 52 (2008), pp.~309-368, Mathematical Society of Japan.
	
	\bibitem{hain4}
	{\em Remarks on non-abelian cohomology of proalebraic groups},Journal of Algebraic Geometry, vol. 22 (2013), pp. 581-598, ISSN 1056-3911 [arXiv:1009.3662].
	
	\bibitem{hm1}
	R.~Hain and M.~Matsumoto:
	{\em Galois actions on fundamental groups of curves and the cycle $C-C^{-}$}, J.~Inst.~Math.~Jussieu 4 (2005), 363-403.
	
	\bibitem{wei}
	R.~Hain and M.~Matsumoto:
	{\em Weighted completion of Galois groups and Galois actions on the fundamental group of $\P^1-\{0,1,\infty\}$}, Compositio Math. 139 (2003), 119-167.
	
	\bibitem{Hu}
	J.~Hubbard: {\em Sur la non-existence de sections analytiques $\grave{a}$ la courbe univerelle de Teichm$\ddot{u}$ller}, C.~R.~Acad. Sci. Paris S$\acute{\text{e}}$r. A-B 274 (1972), A978-A979. 
	


	\bibitem{joh1}
	D.~Johnson:
	{\em An abelian quotient of the mapping cass group $\T_g$}, Math. Ann., 249(3): 225-242 1980.
	\bibitem{joh}
	D.~Johnson:
	{\em The structure of the Torelli group, III: The abelianization of $\T$}, Topology 24 (1985), 127-144.


	\bibitem{Kim}
	M.~Kim:
	{\em The motivic fundamental group of $\P^1-\{0,1,\infty\}$ and the theorem of Siegel}, Invent. Math. 161 (2005), 629-656.

	
	
	\bibitem{mss}
	S.~Morita, T.~Saito, and M.~Suzuki:
	{\em Abelianizations of derivation Lie algebras of the free associative algebra and the free Lie algebra}, Duke Math J.,Vol.~62, Number 5 (2013) 965-1002.
	
	
	
	
	\bibitem{noo2}
	B.~Noohi:
	{\em Fundations of topological stacks I}, submitted, 2005, [arXiv:math/0503247].
	
	
	\bibitem{Qui}
	D.~Quillen:
	{\em Rational homotopy theory}, Ann.~of Math. 90(1969), 205-295.
		
	\bibitem{tanaka}
	A.~Tanaka:
	{\em The first homology group of the hyperelliptic mapping class group
		with twisted coefficients}, Topology and its Applications 115 (2001) 19–42.

\end{thebibliography}
\end{document}